\newcommand{\numberset}{\mathbb}
\newcommand{\R}{\numberset{R}}
\theoremstyle{plain}
\newtheorem{thm}{Theorem}[section]
\newtheorem{proposition}[thm]{Proposition}
\newtheorem{lemma}[thm]{Lemma}
\newtheorem{corollary}[thm]{Corollary}
\theoremstyle{definition}
\newtheorem{definition}[thm]{Definition}
\def\XXint#1#2#3{{\setbox0=\hbox{$#1{#2#3}{\int}$} 
		\vcenter{\hbox{$#2#3$}}\kern-.5\wd0}}
\def\eps{\varepsilon}
\def\div{{\rm div}}
\def\supp{\mathrm{supp}}
\def\loc{\mathrm{loc}}
\numberwithin{equation}{section} \makeatletter
\renewcommand{\p@enumi}{\thesection.}
\title{\textbf{Higher regularity for weak solutions\\
		to degenerate parabolic problems}}
\author{\sc{Andrea Gentile - Antonia Passarelli di Napoli\footnote{{\bf Aknowledgments.} The work of the authors is supported by GNAMPA (Gruppo Nazionale
			per l'Analisi Matematica, la Probabilit\`a e le loro Applicazioni) of INdAM (Istituto
			Nazionale di Alta Matematica).
			The authors have been also supported  by the Universit\'a degli Studi di Napoli    ``Federico II'' through the project FRA-000022-ALTRI-CDA-752021-FRA-PASSARELLI.}}\\ \\
	\small{Dipartimento di Matematica e Applicazioni ``R.
		Caccioppoli''} \\ \small{Universit\'{a} di Napoli ``Federico
		II'', via Cintia - 80126 Napoli}
	\\ \small{e-mail: \href{mailto:andrea.gentile@unina.it}{andrea.gentile@unina.it},\href{mailto:antpassa@unina.it}{antpassa@unina.it}}} 
\begin{document}
	\maketitle
	
	\begin{abstract}
		\noindent In this paper, we study the regularity of weak solutions to the following strongly degenerate
		parabolic equation
		\begin{equation*}
			u_t-\div\left(\left(\left|Du\right|-1\right)_+^{p-1}\frac{Du}{\left|Du\right|}\right)=f\qquad\mbox{ in }\Omega_T,
		\end{equation*}
		where $\Omega$ is a bounded domain in $\mathbb{R}^{n}$ for $n\geq2$,
		$p\geq2$ and $\left(\,\cdot\,\right)_{+}$ stands for the positive
		part. We prove  the higher differentiability of a nonlinear function of the spatial gradient of the weak solutions,
		assuming only that $f\in L^{2}_{\loc}\left(\Omega_T\right)$. This allows us to establish the higher integrability of the spatial gradient under the same minimal requirement on the datum $f$.
	\end{abstract}
	
	\bigskip
	
	\hspace{1cm}\small{{\bf Key words.} Widely degenerate problems. Second order regularity. Higher integrability.}
	
	\bigskip
	
	\hspace{1cm}\small{{\bf AMS Classification.} 35B45, 35B65, 35D30, 35K10, 35K65}
	
	\section{Introduction}
	{In this paper, we study the regularity properties of weak solutions $u:\Omega_{T}\rightarrow\mathbb{R}$ to the following parabolic equation 
		\begin{equation}\label{eq1*}
			u_{t}-\mathrm{div}\left(\left(\vert Du\vert-1\right)_{+}^{p-1}\frac{Du}{\vert Du\vert}\right)=f\qquad\mbox{ in }\Omega_{T}=\Omega\times(0,T),
		\end{equation}
		which appears in gas filtration problems
		taking into account the initial pressure gradient. For a precise description of this motivation we refer to \cite{Baremblatt} and \cite[Section 1.1]{AmbrosioPdN}.
		\\
		The main feature of   this equation is that it
		possesses a wide degeneracy, coming from the fact that its modulus of ellipticity vanishes at all points where $\left|Du\right|\le 1$ and hence 
		its principal part of  behaves like a $p$-Laplacian operator only at infinity.
		\\
		In this paper we address two interrelated aspects of the regularity theory for solutions to parabolic problems, namely the higher differentiability and the higher integrability of the weak solutions to \eqref{eq1*}, with the main aim of weakening the assumption on the datum $f$ with respect to the available literature.
		\\
		These questions have been exploited in case of non degenerate parabolic problems with quadratic growth by Campanato in \cite{Campanato}, by Duzaar et al. in \cite{DMS} in case of superquadratic growth, while Scheven in \cite{Sche} faced the subquadratic growth case. In the above mentioned papers, the problem have been faced or in case of homogeneous  equations or considering sufficiently regular datum. It is worth mentioning that  the higher integrability of the gradient of the solution is achieved through an interpolation argument, once its higher differentiability is established.
	}

	
	
	
	{This strategy has revealed to be successful  also for degenerate equations as in \eqref{eq1*}. Indeed the higher integrability
		of the spatial gradient of weak solutions to equation \eqref{eq1*},
		has been proven in \cite{AmbrosioPdN} , under  suitable
		assumptions on the datum $f$ in the scale of Sobolev spaces.}
	
	{We'd like to recall that a common feature for nonlinear problems with growth rate $p>2$ is  that the higher differentiability is proven for a nonlinear expression of the gradient which takes into account the growth of the principal part of the equation.\\
		Indeed, already for the non degenerate $p$-Laplace equation, the higher differentiability refers to the function $V_p\left(Du\right)=\left(1+\left|Du\right|^2\right)^\frac{p-2}{4}Du$.
		In case of widely degenerate problems, this phenomenon persists, and higher differentiability results, both for the  elliptic and the  parabolic problems, hold true for the function $H_\frac{p}{2}\left(Du\right)=\left(\left|Du\right|-1\right)_+^{\frac{p}{2}}\displaystyle\frac{Du}{\left|Du\right|}$.
			It is worth noticing that, as it can be expected, this function of the gradient doesn't give  information on the second  regularity of the solutions in the set where the equation degenerates.
			Actually, since every 1-Lipschitz continuous function is a solution to the elliptic equation
			$$\div\left(H_{p-1}\left(Du\right)\right)=0,$$
			{where $H_{p-1}\left(Du\right)=\left(\left|Du\right|-1\right)_+^{p-1}\displaystyle\frac{Du}{\left|Du\right|}$}, no more than Lipschitz regularity
			can be expected.
			\\
			Moreover, it is well known that in case of degenerate problems (already for the degenerate $p$-Laplace equation, with $p>2$) a Sobolev regularity is required for the datum $f$ in order to get the higher differentiability of the solutions {(see, for example \cite{Br1} for elliptic and \cite{AmbrosioPdN} for parabolic equations)}. Actually, the sharp assumption for the datum in the elliptic setting has been determined in \cite{Br1} as a fractional Sobolev regularity suitably  related to the growth exponent $p$ and the dimension $n$.\\
			The main aim of this paper is to show that without  assuming any kind of Sobolev regularity for the datum, but assuming only $f\in L^2$, we are still able to obtain higher differentiability for the weak solutions but outside a set larger than the degeneracy set of the problem. {It is worth mentioning that, while for the $p$-Laplace equation the degeneracy appears for $p>2$, here, even in case $p=2$, under a $L^2$ integrability assumption on the datum $f$, the local $W^{2,2}$ regularity of the solutions cannot be obtained.}
			\\
			{Actually, we shall prove the following}}
		\begin{thm}\label{Theorem1}
			Let $n\ge2$, $p\ge2$ and $f\in L^{2}_{\loc}\left(\Omega_T\right)$. Moreover, let us assume that 
			$$
			u\in C^0\left(0, T; L^2\left(\Omega\right)\right)\cap L^p_\loc\left(0, T; W^{1, p}_\loc\left(\Omega\right)\right)
			$$
			is a weak solution to \eqref{eq1*}. Then, for any $\delta\in(0,1)$, we have
			$$
			\mathcal{G}_\delta\left(\left(\left|Du\right|-1-\delta\right)_+\right)\in L^2_\loc\left(0,T; W^{1,2}_\loc\left(\Omega\right)\right),
			$$
			where
			$$
			\mathcal{G}_\delta(t):=\int_0^t\frac{s(s+\delta)^{\frac{p-2}{2}}}{\sqrt{1+\delta+s^2}}\, ds, \qquad\mbox{ for every }t\ge0.
			$$
			Moreover the following estimate
			\begin{eqnarray}\label{DGdeltaDu}
				&&\int_{Q_\frac{R}{16}}\left|D\left[\mathcal{G}_\delta\left(\left(\left|Du\right|-\delta -1\right)_+\right)\right]\right|^2\,dz\cr\cr 
				&\le& \frac{c\left(n, p\right)}{R^2{\delta^2}}\left[\int_{Q_{ {R}}}\left(\left|Du\right|^{p}+1\right)\,dz+\frac{1}{\delta^p}\int_{Q_{R}}\left|f\right|^2 \,dz\right], 
			\end{eqnarray}
			holds for any {$R>0$ such that $Q_R=Q_R\left(z_{0}\right)\Subset\Omega_T$}.
		\end{thm}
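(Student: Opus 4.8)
\emph{Strategy and regularization.} Since $u$ is merely a weak solution it is not known a priori to be twice differentiable in space, and since $f$ is only in $L^2$ its spatial difference quotients are unavailable; hence the plan is to prove the a priori estimate \eqref{DGdeltaDu} first for a family of regularized, non-degenerate problems — where all the manipulations below are legitimate — with constants independent of the regularization parameters, and then to pass to the limit. Fix $Q_R=Q_R(z_0)\Subset\Omega_T$. For $\eps\in(0,1)$ take $f_\eps\in C^\infty$ with $f_\eps\to f$ in $L^2(Q_R)$, and replace the vector field $H_{p-1}(\xi)=\left(|\xi|-1\right)_+^{p-1}\tfrac{\xi}{|\xi|}$ by a smooth, uniformly elliptic field $A_\eps$ with the same monotone $p$-growth structure up to $\eps$ (for instance $A_\eps(\xi)=H_{p-1}(\xi)+\eps\left(1+|\xi|^2\right)^{\frac{p-2}{2}}\xi$, after mollifying the positive part). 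On a cylinder slightly smaller than $\Omega_T$ one solves the Cauchy--Dirichlet problem for $\partial_t v-\div A_\eps(Dv)=f_\eps$ with $v=u$ on the parabolic boundary; by the classical theory for non-degenerate parabolic equations the solutions $u_\eps$ are smooth enough for what follows (in particular $Du_\eps\in L^\infty_\loc$, $D^2u_\eps,\partial_t u_\eps\in L^2_\loc$), they satisfy the uniform bound $\int\left(|Du_\eps|^p+1\right)\le c\int_{Q_R}\left(|Du|^p+1\right)$, and $u_\eps\to u$, $Du_\eps\to Du$ a.e.\ and in the natural topologies as $\eps\to0$. Time integrations by parts are made rigorous via Steklov averages, using $u\in C^0(0,T;L^2(\Omega))$.

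\emph{Differentiated equation and weighted test function.} Differentiating the regularized equation in $x_s$ yields $\partial_t D_s u_\eps-D_i\!\left(\partial_{\xi_j}A_\eps^i(Du_\eps)\,D_{js}u_\eps\right)=D_s f_\eps$. I would test this with $\eta^2\,\Lambda_\delta\!\left(|Du_\eps|\right)D_s u_\eps$, sum over $s$, and integrate over $Q_R\times(t_1,t_2)$, where $\eta=\eta(x)\zeta(t)$ is a cut-off with $\eta\equiv1$ on $Q_{R/16}$, $|D\eta|\le c/R$, $|\partial_t\zeta|\le c/R^2$, and $\Lambda_\delta\ge0$ is nondecreasing and vanishes on $\{|Du_\eps|\le1+\delta\}$, still to be chosen. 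After integration by parts the parabolic term produces $\tfrac12\sup_t\int\eta^2\,\Phi_\delta\!\left(|Du_\eps|\right)$, with $\Phi_\delta'(\rho)=\rho\Lambda_\delta(\rho)$, up to a term controlled by $\|\partial_t\zeta\|_\infty$; the principal part produces the nonnegative quantity $\int\eta^2\,\Lambda_\delta\langle\partial_\xi A_\eps(Du_\eps)D_sDu_\eps,D_sDu_\eps\rangle$, a term carrying $\Lambda_\delta'$, and the usual cut-off terms proportional to $|D\eta|$; the right-hand side produces $\sum_s\int\eta^2\,\Lambda_\delta\,D_s f_\eps\,D_s u_\eps$.

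\emph{Choice of $\Lambda_\delta$, ellipticity, and the weight terms.} On $\{|\xi|>1+\delta\}$ the matrix $\partial_\xi A_\eps(\xi)$ is elliptic, the component of $D_sDu_\eps$ parallel to $Du_\eps$ being weighted by at least $(p-1)\left(|Du_\eps|-1\right)_+^{p-2}$; since $\sum_s\langle Du_\eps,D_sDu_\eps\rangle^2/|Du_\eps|^2=\bigl|D|Du_\eps|\bigr|^2$, the principal term is bounded below by $(p-1)\int\eta^2\,\Lambda_\delta(|Du_\eps|)\left(|Du_\eps|-1\right)_+^{p-2}\bigl|D|Du_\eps|\bigr|^2$. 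On $\{|Du_\eps|>1+\delta\}$ one has $(s+\delta)^{\frac{p-2}{2}}=\left(|Du_\eps|-1\right)^{\frac{p-2}{2}}$ for $s=\left(|Du_\eps|-1-\delta\right)_+$, whence
\[
\Bigl|D\bigl[\mathcal{G}_\delta((|Du_\eps|-1-\delta)_+)\bigr]\Bigr|^2=\mathcal{G}_\delta'(s)^2\,\bigl|D|Du_\eps|\bigr|^2=\frac{\left(|Du_\eps|-1-\delta\right)_+^2}{1+\delta+\left(|Du_\eps|-1-\delta\right)_+^2}\left(|Du_\eps|-1\right)_+^{p-2}\bigl|D|Du_\eps|\bigr|^2 ,
\]
so the choice $\Lambda_\delta(|\xi|)=\dfrac{\left(|\xi|-1-\delta\right)_+^2}{1+\delta+\left(|\xi|-1-\delta\right)_+^2}$ makes the principal term control $(p-1)\int_{Q_{R/16}}\bigl|D[\mathcal{G}_\delta((|Du_\eps|-1-\delta)_+)]\bigr|^2$. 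The cut-off terms are handled by Young's inequality, reabsorbing a small multiple of the principal term and leaving $\tfrac{c}{R^2}\int_{Q_R}\left(|Du_\eps|^p+1\right)$; the $\Lambda_\delta'$-term uses the structural bound $\bigl|(|\xi|-1-\delta)\,|\xi|\,\Lambda_\delta'(|\xi|)\bigr|\le c\,\Lambda_\delta(|\xi|)$, valid where $|\xi|$ is comparable to or larger than $1+2\delta$, while near $\{|Du_\eps|=1+\delta\}$ it is estimated crudely through $\Lambda_\delta\le1$ and $\left(|Du_\eps|-1\right)^{p-2}\ge\delta^{p-2}$ together with the bound of the first step — and it is this region that generates the factor $\delta^{-2}$ in \eqref{DGdeltaDu}.

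\emph{The datum, conclusion, and main obstacle.} Since $f$ has no Sobolev regularity I would integrate $\sum_s\int\eta^2\Lambda_\delta(|Du_\eps|)D_sf_\eps\,D_su_\eps$ by parts in $x_s$ (legitimate because $f_\eps$ is smooth, and the final estimate will involve only $\int|f_\eps|^2$), obtaining $-\sum_s\int f_\eps\,D_s\!\bigl(\eta^2\Lambda_\delta(|Du_\eps|)D_su_\eps\bigr)$. Expanding, the cut-off term is bounded by $\tfrac{c}{R}\bigl(\int_{Q_R}|f_\eps|^2\bigr)^{1/2}\bigl(\int_{Q_R}(|Du_\eps|^p+1)\bigr)^{1/2}$, while the two terms carrying $D^2u_\eps$ are treated by Young's inequality, reabsorbing a small multiple of the principal term of the second step and leaving $c_\sigma\int\eta^2|f_\eps|^2\,W_\delta(|Du_\eps|)$; the crucial point is that, because $\mathcal{G}_\delta'$ carries the denominator $\sqrt{1+\delta+s^2}$ whose square reproduces exactly the factor $1+\delta+\left(|Du_\eps|-1-\delta\right)^2$ generated by the division, one checks $W_\delta(|\xi|)\le c(n,p)\,\delta^{-p}$ uniformly on $\{|\xi|>1+\delta\}$ (the extremal regime being $|\xi|\downarrow1+\delta$, where $\left(|\xi|-1\right)^{p-2}\approx\delta^{p-2}$). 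Collecting everything gives \eqref{DGdeltaDu} for $u_\eps$ with $c(n,p)$ independent of $\eps$; then weak lower semicontinuity of the $L^2$-norm of the gradient, the strong $L^2_\loc$-convergence $\mathcal{G}_\delta((|Du_\eps|-1-\delta)_+)\to\mathcal{G}_\delta((|Du|-1-\delta)_+)$ (from $Du_\eps\to Du$ a.e.\ and the continuity and growth of $\mathcal{G}_\delta$), the uniform energy bound and $f_\eps\to f$ in $L^2$ yield, as $\eps\to0$, both $\mathcal{G}_\delta((|Du|-1-\delta)_+)\in L^2_\loc(0,T;W^{1,2}_\loc(\Omega))$ and \eqref{DGdeltaDu}. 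The hard part is precisely the combined treatment near $\{|Du|=1+\delta\}$ of the $\Lambda_\delta'$-term and of the datum term, where ellipticity and the weight both degenerate: these cannot be absorbed and must be estimated by hand, with the bookkeeping returning exactly the $\delta$-powers of \eqref{DGdeltaDu}; this is the very reason the result is phrased through the unusual $\mathcal{G}_\delta$ rather than $H_{p/2}$, the factor $1/\sqrt{1+\delta+s^2}$ being what permits the $L^2$-datum term to be absorbed at the cost of a single negative power of $\delta$ and no Sobolev regularity of $f$. Making the approximation scheme fully rigorous — existence, regularity, convergence of the $u_\eps$, and the Steklov-average justification of the time integrations by parts — is the remaining, more routine, difficulty.
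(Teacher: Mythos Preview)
Your overall strategy coincides with the paper's: regularize by $A_\eps(\xi)=H_{p-1}(\xi)+\eps(1+|\xi|^2)^{\frac{p-2}{2}}\xi$ and $f_\eps=f*\rho_\eps$, test the (spatially) differentiated equation with a Moser-type function carrying a bounded weight that vanishes on $\{|Du_\eps|\le 1+\delta\}$, integrate the datum by parts so that only $\|f_\eps\|_{L^2}$ appears, and pass to the limit. The paper phrases the a priori step via difference quotients and uses the weight $g(|Du^\eps|^2)$ rather than your $\Lambda_\delta(|Du_\eps|)$, but this is cosmetic. Two points, however, deserve correction.

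\medskip
\textbf{The $\Lambda_\delta'$ term from the principal part has a good sign.} After summing over $s$ you get
\[
\int \eta^2\,\Lambda_\delta'(|Du_\eps|)\,|Du_\eps|\,\bigl\langle \partial_\xi A_\eps(Du_\eps)\,D|Du_\eps|,\,D|Du_\eps|\bigr\rangle\,dz\ \ge\ 0,
\]
because $\sum_s D_su_\eps\,D_{js}u_\eps=|Du_\eps|\,D_j|Du_\eps|$, $\Lambda_\delta'\ge0$, and $\partial_\xi A_\eps$ is positive semidefinite. Hence this term is simply dropped (as the paper does with its $J_{h,4},J_{h,5}$), and it is \emph{not} the source of the factor $\delta^{-2}$. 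In the paper that factor arises only at the very end, when the coercive quantity $g\,\frac{(|Du^\eps|-1)_+^{p}}{|Du^\eps|^{2}}|D^2u^\eps|^2$ (which is what the full ellipticity in Lemma~\ref{diff-A} produces, and what you need to absorb the $\Lambda_\delta\,\Delta u_\eps$ piece of the datum) is converted into $|D\mathcal{G}_\delta|^2$; on the support of the weight one has $\tfrac{|Du^\eps|^2}{(|Du^\eps|-1)_+^2}\le c\,\delta^{-2}$, and that is the whole story. Note also that your lower bound on the principal term uses only the radial eigenvalue $(p-1)(|\xi|-1)_+^{p-2}$, which controls $|D|Du_\eps||$ but not the full Hessian; to reabsorb the datum term $\int\eta^2 f_\eps\,\Lambda_\delta\,\Delta u_\eps$ you must keep the full quadratic form, exactly as the paper does.

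\medskip
\textbf{The passage to the limit is not ``classical''.} You assert that $Du_\eps\to Du$ a.e.\ and $\int_{Q_R}|Du_\eps|^p\le c\int_{Q_R}(|Du|^p+1)$ follow from the standard theory for non-degenerate equations. They do not: both require a \emph{comparison estimate} between $u_\eps$ and $u$, which is Step~1 of the paper's proof. One tests the difference of the two equations with $u_\eps-u$, uses the monotonicity of $H_{p-1}$ (Lemma~\ref{lem:Brasco}) and the interpolation Lemma~\ref{lem:inter} to close, and obtains
\[
\int_{Q_R}\bigl|H_{\frac p2}(Du_\eps)-H_{\frac p2}(Du)\bigr|^2\,dz\ \le\ c(n,p,R)\,\|f-f_\eps\|_{L^2(Q_R)}^{\,\alpha}\ +\ \eps\,c_p\!\int_{Q_R}|Du|^p\,dz,
\]
together with the energy bound \eqref{eq:ee17}. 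Only then do you get $(|Du_\eps|-1)_+\to(|Du|-1)_+$ a.e.\ (up to a subsequence), which is all that is needed to pass $\mathcal{G}_\delta((|Du_\eps|-1-\delta)_+)$ to the limit and use lower semicontinuity. This comparison step is an essential ingredient, not routine bookkeeping; without it your limiting argument is incomplete.
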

		
		{As already mentioned, the weak solutions of \eqref{eq1*} are not twice differentiable, and hence it is not possible in general to differentiate the equation to  estimate  the second derivative of the solutions. We overcome this difficulty by introducing a suitable family of approximating problems whose solutions are  regular enough by the standard theory (\cite{DiB}). The major effort in the proof of previous Theorem is to establish suitable estimates for the solutions of the regularized problems that are uniform with respect to the approximation's parameter. Next, we take advantage from these uniform estimates in the use of a comparison argument aimed to bound the difference quotient of a suitable nonlinear function of the gradient of the solution that vanishes in the set $\Set{\left|Du\right|\le 1+\delta}$, with $\delta>0$.}

		{Roughly speaking, due to the weakness of our assumption on the datum, we only get the  higher differentiability of a nonlinear function of the gradient of the solutions that vanishes in a set which is larger with respect to that of the degeneracy of the problem. This is quite predictable, since the same kind of phenomenon occurs in the setting of widely degenerate elliptic problems (see, for example \cite{CGGP1}).
			\\
			Anyway, as a consequence of the higher differentiability result in Theorem \ref{Theorem1}, we establish a higher integrability result for the spatial gradient of the solution to equation \eqref{eq1*}, which is the following}
		
		\begin{thm}\label{thm2} Under the assumptions of Theorem \ref{Theorem1}, we have
			$$Du\in L^{p+\frac{4}{n}}_{\mathrm{loc}}\left(\Omega_T\right)$$
			with the following estimate
			\begin{eqnarray}\label{est2}
				&&\int_{Q_{\frac{\rho}{2}}}\left|Du\right|^{p\,+\,\frac{4}{n}}\,dz\leq \frac{c\left(n,p\right)}{\rho^{\frac{2(n+2)}{n}}}\left[\int_{Q_{{2\rho}}}\left(1+\left|Du\right|^{p}+\left|f\right|^2\right)\,dz\right]^{\frac{2}{n}+1},
			\end{eqnarray}
			for every parabolic cylinder $Q_{{2\rho}}\left(z_{0}\right)\Subset\Omega_T$, with a constant $c=c(n,p).$
		\end{thm}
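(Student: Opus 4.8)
The plan is to derive Theorem \ref{thm2} from Theorem \ref{Theorem1} by the by-now standard parabolic interpolation (Gagliardo--Nirenberg--Sobolev) argument that turns second-order spatial regularity into higher integrability of the gradient; the only problem-specific ingredient is the growth of $\mathcal{G}_\delta$. I would begin by fixing $\delta$ at some value in $(0,1)$ (so that all constants end up depending only on $n,p$), setting $w:=\mathcal{G}_\delta\big((|Du|-1-\delta)_+\big)$, and recording two elementary pointwise inequalities: from $s/\sqrt{1+\delta+s^2}\le 1$ one gets $\mathcal{G}_\delta(t)\le\frac{2}{p}(t+\delta)^{p/2}$, hence $w^2\le c(p)\,|Du|^{p}$; and bounding $\sqrt{1+\delta+s^2}\le\sqrt 2\,s$ together with $(s+\delta)^{(p-2)/2}\ge s^{(p-2)/2}$ on $s\ge\sqrt 2$ and integrating gives $\mathcal{G}_\delta(t)\ge c_1(p)\,t^{p/2}-c_2(p)$ for all $t\ge0$, hence (using $\delta<1$) $(|Du|-2)_+^{p/2}\le c(p)\,(w+1)$ a.e. in $\Omega_T$.

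Next I would invoke the parabolic Sobolev inequality (see \cite{DiB}): every $v\in L^2\big(0,T;W^{1,2}_0(B)\big)\cap L^\infty\big(0,T;L^2(B)\big)$ lies in $L^{2+\frac{4}{n}}\big(B\times(0,T)\big)$ with
\begin{equation*}
\iint |v|^{2+\frac{4}{n}}\,dz \le c(n)\left(\operatorname*{ess\,sup}_{t}\int |v(\cdot,t)|^{2}\,dx\right)^{\!\frac{2}{n}}\iint |Dv|^{2}\,dz ,
\end{equation*}
and apply it to $v:=\eta\,w$, with $\eta$ a standard cut-off, $\eta\equiv1$ on $Q_{\rho/2}$, $\supp\eta\Subset Q_\rho$, $|D\eta|\le c/\rho$, $|\partial_t\eta|\le c/\rho^2$ (admissible since Theorem \ref{Theorem1} gives $w\in L^2_\loc(0,T;W^{1,2}_\loc(\Omega))$). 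The gradient term splits as $|Dv|^2\le c\,\eta^2|Dw|^2+c\,|D\eta|^2w^2$, to be controlled by the localized (cut-off) form of \eqref{DGdeltaDu} and by $w^2\le c(p)|Du|^p$, giving $\iint|Dv|^2\,dz\le \frac{c(n,p)}{\rho^{2}}\iint_{Q_{2\rho}}(1+|Du|^p+|f|^2)\,dz$; while for the supremal term, since $(\eta w)^2\le c(p)|Du|^p$, one needs the energy bound
\begin{equation*}
\operatorname*{ess\,sup}_{t}\int_{B_\rho}(|Du|-1)_+^{p}\,dx\le \frac{c(n,p)}{\rho^{2}}\iint_{Q_{2\rho}}\big(1+|Du|^p+|f|^2\big)\,dz .
\end{equation*}
Feeding these two estimates into the parabolic Sobolev inequality, then using $(|Du|-2)_+^{p/2}\le c(p)(w+1)$ to pass from $w$ back to $|Du|$ and $|Q_{\rho/2}|\le c\,\rho^{n+2}\le c\iint_{Q_{2\rho}}(1+|Du|^p+|f|^2)$ to absorb the constant contribution, gives both $(|Du|-2)_+\in L^{p+\frac{2p}{n}}_\loc(\Omega_T)\subseteq L^{p+\frac{4}{n}}_\loc(\Omega_T)$ (the inclusion holding because $p\ge2$) and the explicit bound \eqref{est2}.

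I expect the main obstacle to be the $L^\infty$-in-time energy estimate for $(|Du|-1)_+^p$ used above. It does \emph{not} follow from the hypotheses $u\in C^0(0,T;L^2)\cap L^p_\loc(0,T;W^{1,p}_\loc)$ alone: testing \eqref{eq1*} with $u\eta^2$ yields only the $L^1$-in-time integrability of $(|Du|-1)_+^p$, whereas the bound above is what one obtains by testing formally with $\partial_t u\,\eta^2$, which is not licit for a mere weak solution. So, exactly as for Theorem \ref{Theorem1}, it has to be established as an a priori estimate on the smooth solutions of the regularized problems, uniformly in the approximation parameter, and then transferred to $u$ in the limit; granting that, everything else reduces to bookkeeping the constants and the radii of the nested cylinders, together with the elementary two-sided bounds on $\mathcal{G}_\delta$ recorded above.
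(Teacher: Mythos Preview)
Your overall strategy---Gagliardo--Nirenberg interpolation of the second-order information from Theorem~\ref{Theorem1} against a sup-in-time bound on the gradient---is exactly the right one, and matches the paper. The gap is in the sup-in-time estimate you need. Applying the parabolic Sobolev inequality with exponents $p=q=2$ to $v=\eta w$ forces you to control $\sup_t\int_{B_\rho}(|Du|-1)_+^{p}\,dx$, and this is \emph{not} what testing with $\eta^2\partial_t u$ gives. Carrying out that test on the regularized problem yields the cross term $\int \eta\chi\,u^\varepsilon_t\,\langle H_{p-1}(Du^\varepsilon),D\eta\rangle$, and after Young's inequality one is left with $\int_{Q_{2\rho}}|D\eta|^2(|Du^\varepsilon|-1)_+^{2(p-1)}\,dz$ on the right-hand side. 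For $p>2$ the exponent $2(p-1)>p$, so this term cannot be absorbed into the $L^p$ energy and the estimate does not close. The bound you wrote down is therefore false in general (it is correct only for $p=2$).

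The paper avoids this by never asking for $\sup_t\int(|Du^\varepsilon|-1)_+^p$. Instead of applying the interpolation inequality to $\eta\,\mathcal{G}_\delta$, it applies the spatial Sobolev embedding on time slices to $\varphi\,[\mathcal{G}_\delta((|Du^\varepsilon|-1-\delta)_+)]^{1+\frac{4}{np}}$. The extra exponent $\tfrac{4}{np}$ is chosen so that, after the H\"older split, the factor to be taken in $L^\infty_tL^1_x$ is $(|Du^\varepsilon|-1)_+^{2}$ rather than $(|Du^\varepsilon|-1)_+^{p}$; and $\sup_t\int_{B_\rho}(|Du^\varepsilon|^2-1-\delta)_+\,dx$ is precisely what Lemma~\ref{uniformestlemma} provides (look at the first term on the left of \eqref{uniformest}). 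Equivalently, in your language, you should invoke Lemma~\ref{lem:inter} with $q=\tfrac{4}{p}$ rather than $q=2$. A second, smaller difference: the paper runs the whole argument at the level of $u^\varepsilon$---using both halves of \eqref{uniformest} simultaneously---and only at the very end passes to the limit via Fatou and the comparison \eqref{eq:ee17}; this sidesteps having to transfer the sup-in-time bound to $u$ itself.
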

		
		{The proof of previous Theorem consists in using an interpolation argument with the aim of  establishing  an estimate for the  $L^{p+\frac{4}{n}}$ norm of the gradient of the solutions to the approximating problems that is preserved  in the passage to the limit. }
		
		We conclude  mentioning that the elliptic version of our  equation naturally
		arises in optimal transport problems with congestion effects, and
		the regularity properties of its weak solutions have been widely investigated
		(see e.g. \cite{Ambr, Bog, Br0, Br1}).
		Moreover, we'd like to stress that, for sake of clarity, we confine ourselves to equation \eqref{eq1*}, but we believe that our techniques  apply as well  to a general class of equations with a widely degenerate
		structure.
		\medskip
		
		\section{Notations and preliminaries\label{sec:prelim}}
		
		\noindent $\hspace*{1em}$In this paper we shall denote by $C$ or
		$c$ a general positive constant that may vary on different occasions.
		Relevant dependencies on parameters will be
		properly stressed using parentheses or subscripts. The norm we use
		on $\mathbb{R}^{n}$ will be the standard Euclidean one and it will
		be denoted by $\left|\,\cdot\,\right|$. In particular, for the vectors
		$\xi,\eta\in\mathbb{R}^{n}$, we write $\langle\xi,\eta\rangle$ for
		the usual inner product and $\left|\xi\right|:=\langle\xi,\xi\rangle^{\frac{1}{2}}$
		for the corresponding Euclidean norm.\\
		$\hspace*{1em}$For points in space-time, we will use abbreviations
		like $z=(x,t)$ or $z_{0}=\left(x_{0},t_{0}\right)$, for spatial variables $x$,
		$x_{0}\in\mathbb{R}^{n}$ and times $t$, $t_{0}\in\mathbb{R}$. We
		also denote by $B\left(x_{0},\rho\right)=B_{\rho}\left(x_{0}\right)=\Set{x\in\mathbb{R}^{n}:\left|x-x_{0}\right|<\rho} $
		the open ball with radius $\rho>0$ and center $x_{0}\in\mathbb{R}^{n}$;
		when not important, or clear from the context, {we shall omit to indicate
			the center, denoting}: $B_{\rho}\equiv B\left(x_{0},\rho\right)$. Unless otherwise
		stated, different balls in the same context will have the same center.
		Moreover, we use the notation
		\[
		Q_{\rho}\left(z_{0}\right):=B_{\rho}\left(x_{0}\right)\times\left(t_{0}-\rho^{2},t_{0}\right),\qquad z_{0}=\left(x_{0},t_{0}\right)\in\mathbb{R}^{n}\times\mathbb{R},\quad\rho>0,
		\]
		for the backward parabolic cylinder with vertex $\left(x_{0},t_{0}\right)$ and
		width $\rho$. We shall sometimes omit the dependence on the vertex
		when the cylinders occurring share the same vertex.
		Finally, for a cylinder $Q=A\times\left(t_{1},t_{2}\right)$, where $A\subset\mathbb{R}^{n}$
		and $t_{1}<t_{2}$, we denote by
		\[
		\partial_{\mathrm{par}}Q:=\left(A\times\Set{t_{1}}\right)\cup\left(\partial A\times\left[t_{1},t_{2}\right]\right)
		\]
		the usual {parabolic boundary} of $Q$, which is nothing but
		the standard topological boundary without the upper cap $A\times\Set{t_{2}}$.\\
		\\
		\noindent We now recall some tools that will be useful to prove
		our results.\\ 
		For the auxiliary function $H_{\lambda}:\mathbb{R}^{n}\rightarrow\mathbb{R}^{n}$
		defined as
		\begin{equation}\label{Hdef}
			H_{\lambda}(\xi):=\begin{cases}
				\displaystyle\left(\left|\xi\right|-1\right)_{+}^{\lambda}\frac{\xi}{\left|\xi\right|}\qquad&\mbox{ if }\quad\xi\in\mathbb{R}^{n}\setminus\left\{ 0\right\},\vspace{11pt}\\
				0\qquad&\mbox{ if }\quad\xi=0,
			\end{cases}
		\end{equation}
		where $\lambda>0$ is a parameter, we record the following estimates
		(see \cite[Lemma 4.1]{Br}):\\
		\begin{lemma}\label{lem:Brasco}
			\noindent If $2\leq p<\infty$, then for every
			$\xi,\eta\in\mathbb{R}^{n}$ it holds 
			\[
			\langle H_{p-1}(\xi)-H_{p-1}(\eta),\xi-\eta\rangle\,\geq\,\frac{4}{p^{2}}\left|H_{\frac{p}{2}}(\xi)-H_{\frac{p}{2}}(\eta)\right|^{2},
			\]
			\[
			\left|H_{p-1}(\xi)-H_{p-1}(\eta)\right|\,\leq\,(p-1)\left(\left|H_{\frac{p}{2}}(\xi)\right|^{\frac{p-2}{p}}+\left|H_{\frac{p}{2}}(\eta)\right|^{\frac{p-2}{p}}\right)\left|H_{\frac{p}{2}}(\xi)-H_{\frac{p}{2}}(\eta)\right|.
			\]
		\end{lemma}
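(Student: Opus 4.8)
Both inequalities are pointwise and involve only the radial field $H_\lambda(\zeta)=(|\zeta|-1)_+^{\lambda}\,\zeta/|\zeta|$, which vanishes on the closed unit ball, so the plan is to reduce everything to scalar and angular quantities. I would set $a=|\xi|$, $b=|\eta|$, choose unit vectors $u=\xi/|\xi|$, $v=\eta/|\eta|$, put $c=\langle u,v\rangle\in[-1,1]$, and write $g(t)=(t-1)_+^{p-1}$, $h(t)=(t-1)_+^{p/2}$, so that $H_{p-1}(\xi)=g(a)u$, $H_{p/2}(\xi)=h(a)u$ and analogously for $\eta$; the cases $\xi=0$ or $\eta=0$, as well as $p=2$, only simplify the computations below.

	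\emph{First inequality.} First I would expand the inner products to obtain the exact identity
	\[
	\langle H_{p-1}(\xi)-H_{p-1}(\eta),\xi-\eta\rangle-\frac{4}{p^2}\,|H_{p/2}(\xi)-H_{p/2}(\eta)|^2=\Lambda-Kc,
	\]
	where $\Lambda=ag(a)+bg(b)-\tfrac{4}{p^2}(h(a)^2+h(b)^2)$ and $K=bg(a)+ag(b)-\tfrac{8}{p^2}h(a)h(b)$. The crucial remark is that $t\,g(t)=t(t-1)_+^{p-1}\ge(t-1)_+^p=h(t)^2$ for every $t\ge0$, whence $\sqrt{ab\,g(a)g(b)}\ge h(a)h(b)$ and, by the arithmetic--geometric mean inequality together with $p\ge2$, $K\ge 2h(a)h(b)-\tfrac{8}{p^2}h(a)h(b)\ge0$. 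Since $\Lambda-Kc$ is then non-increasing in $c$ on $[-1,1]$, its minimum is attained at $c=1$, where it equals $(a-b)(g(a)-g(b))-\tfrac{4}{p^2}(h(a)-h(b))^2$. Hence everything reduces to the scalar inequality $(a-b)(g(a)-g(b))\ge\tfrac{4}{p^2}(h(a)-h(b))^2$, which (taking $a\ge b$) is trivial for $a\le1$, follows from $a-b\ge a-1$ and $\tfrac{4}{p^2}\le1$ when $b\le1<a$, and for $1<b\le a$ reduces, after the translation $t\mapsto t-1$, to the classical estimate $(\tau-s)(\tau^{p-1}-s^{p-1})\ge\tfrac{4}{p^2}(\tau^{p/2}-s^{p/2})^2$; this I would prove by writing $\tau^{p-1}-s^{p-1}=(p-1)\int_s^{\tau}r^{p-2}\,dr$, $\tau^{p/2}-s^{p/2}=\tfrac{p}{2}\int_s^{\tau}r^{p/2-1}\,dr$, applying the Cauchy--Schwarz inequality to the second integral, and using $p-1\ge1$.

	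\emph{Second inequality.} Here the plan is purely algebraic. I would start from the factorisation $H_{p-1}(\zeta)=w(\zeta)\,H_{p/2}(\zeta)$, with $w(\zeta):=(|\zeta|-1)_+^{(p-2)/2}=|H_{p/2}(\zeta)|^{(p-2)/p}$, and use the symmetric telescoping identity
	\[
	H_{p-1}(\xi)-H_{p-1}(\eta)=\frac{w(\xi)+w(\eta)}{2}(H_{p/2}(\xi)-H_{p/2}(\eta))+\frac{w(\xi)-w(\eta)}{2}(H_{p/2}(\xi)+H_{p/2}(\eta)).
	\]
	Taking norms, assuming without loss of generality $A:=(|\xi|-1)_+\ge B:=(|\eta|-1)_+$, and writing $\alpha=\tfrac{p-2}{2}$, $\beta=\tfrac{p}{2}$ (so that $\beta-\alpha=1$, $w(\xi)=A^\alpha$, $|H_{p/2}(\xi)|=A^\beta$), one gets
	\[
	|H_{p-1}(\xi)-H_{p-1}(\eta)|\le\frac{A^\alpha+B^\alpha}{2}\,|H_{p/2}(\xi)-H_{p/2}(\eta)|+\frac{A^\alpha-B^\alpha}{2}(A^\beta+B^\beta),
	\]
	using $|H_{p/2}(\xi)+H_{p/2}(\eta)|\le A^\beta+B^\beta$. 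Since $A\ge B\ge0$ and $\beta-\alpha=1$, a one-line computation gives $(A^\alpha-B^\alpha)(A^\beta+B^\beta)\le A^{p-1}-B^{p-1}\le(A^\alpha+B^\alpha)(A^\beta-B^\beta)$, while $A^\beta-B^\beta=|H_{p/2}(\xi)|-|H_{p/2}(\eta)|\le|H_{p/2}(\xi)-H_{p/2}(\eta)|$ by the reverse triangle inequality. Substituting these bounds yields
	\[
	|H_{p-1}(\xi)-H_{p-1}(\eta)|\le(A^\alpha+B^\alpha)\,|H_{p/2}(\xi)-H_{p/2}(\eta)|=(|H_{p/2}(\xi)|^{\frac{p-2}{p}}+|H_{p/2}(\eta)|^{\frac{p-2}{p}})|H_{p/2}(\xi)-H_{p/2}(\eta)|,
	\]
	which is in fact stronger than the asserted bound, since $1\le p-1$.

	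The step I expect to be most delicate is proving the sign $K\ge0$ in the first part: it is precisely this that lets one discard every non-collinear configuration and reduce the widely degenerate monotonicity inequality to the familiar $p$-growth one. Everything else, including the whole second inequality, is routine once the radial structure has been exploited.
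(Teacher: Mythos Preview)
Your proof is correct. Note, however, that the paper does not actually prove this lemma: it merely records the statement and cites \cite[Lemma~4.1]{Br} for the proof. So there is no ``paper's own proof'' to compare against here; you have supplied a self-contained argument where the authors chose to outsource.

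That said, your approach is sound and in fact yields slightly more than stated. In the first inequality, the reduction to the collinear case via the affine dependence on $c=\langle u,v\rangle$ and the observation that $K\ge 0$ is clean; the remaining scalar estimate is the standard one for power functions, and your integral/Cauchy--Schwarz derivation is the usual route. In the second inequality, your symmetric splitting $H_{p-1}=w\,H_{p/2}$ together with the elementary chain $(A^\alpha-B^\alpha)(A^\beta+B^\beta)\le A^{p-1}-B^{p-1}\le(A^\alpha+B^\alpha)(A^\beta-B^\beta)$ gives the bound with constant $1$ rather than $p-1$, which is indeed a sharpening. The only cosmetic remark is that the edge cases $\xi=0$ or $\eta=0$ (where the unit vectors $u,v$ are undefined) are not written out, but as you note they are immediate since $H_\lambda$ vanishes on the unit ball.
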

		we record the following estimates
		(see \cite[Lemma 2.8]{Bog})
		{\begin{lemma}\label{diff-A}
				Let  $\xi,\eta\in\R^{k}$ with $|\xi|>1$. Then, we have 
				\begin{eqnarray*}
					\left|H_{p-1}(\xi) - H_{p-1}(\eta)\right|
					\le
					c(p)\,
					\frac{\left[\left(\left|\xi\right|-1\right) +\left(\left|\eta\right|-1\right)_+\right]^{p-1}}{\left|\xi\right|-1}
					\left|\xi - \eta\right|
				\end{eqnarray*}
				and 
				\begin{eqnarray*}
					\langle H_{p-1}(\eta)-H_{p-1}(\xi),
					\cdot\eta-\xi\rangle\ge 
					\frac{\min\Set{1,p-1}}{2^{p+1}}
					\frac{\left(\left|\xi\right|-1\right)^p}{\left|\xi\right|\left(\left|\xi\right|+\left|\eta\right|\right)}
					\left|\eta-\xi\right|^2.
				\end{eqnarray*}
			\end{lemma}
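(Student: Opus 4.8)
The plan is to reduce both inequalities to elementary one-variable estimates by separating the ``radial'' and ``angular'' behaviour of $H_{p-1}$. Throughout write $r=|\xi|>1$, $s=|\eta|$, and $\hat\xi=\xi/|\xi|$, $\hat\eta=\eta/|\eta|$ when $\eta\ne0$. We use the decomposition, valid whenever $\eta\neq 0$,
\begin{equation*}
H_{p-1}(\xi)-H_{p-1}(\eta)=\bigl[(r-1)_+^{p-1}-(s-1)_+^{p-1}\bigr]\hat\xi+(s-1)_+^{p-1}\bigl(\hat\xi-\hat\eta\bigr),
\end{equation*}
together with the standard angular bound $|\hat\xi-\hat\eta|\le \tfrac{2}{\max\{r,s\}}|\xi-\eta|$ (and $|\hat\xi-\hat\eta|\le 2$). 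The case $\eta=0$ is immediate since then $H_{p-1}(\eta)=0$, $|\xi-\eta|=r$, and $|H_{p-1}(\xi)|=(r-1)^{p-1}\le (r-1)^{p-1}/(r-1)\cdot r$ after noting $(r-1)\le r$; so we may assume $\eta\ne0$ below.

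\textbf{The Lipschitz bound.}
For the first inequality, I would treat the scalar map $g(t):=(t-1)_+^{p-1}$ on $[0,\infty)$. When $r,s>1$ the mean value theorem gives $|g(r)-g(s)|=(p-1)\theta^{p-2}|r-s|$ for some $\theta$ between $r$ and $s$, and since $p\ge2$ and $\theta\le\max\{r,s\}\le (r-1)+(s-1)_++1\le c\,[(r-1)+(s-1)_+]$ is false in general — here I would instead use $\theta\le r+s$ and the elementary inequality $\theta^{p-2}\le c(p)\,[(r-1)+(s-1)_+]^{p-2}$ valid because $r-1\ge 0$ forces $\theta-1\le (r-1)+(s-1)_+$ when $\theta$ lies between $r$ and $s$ — wait, that needs care, so the cleaner route is: since $\theta$ is between $r>1$ and $s\ge0$, we have $\theta\le \max\{r,s\}$, and $\theta^{p-2}\le 2^{p-2}\bigl((\theta-1)_+^{p-2}+1\bigr)$, then bound $(\theta-1)_+\le (r-1)+(s-1)_+$ and absorb the $+1$ using $1\le \tfrac{r}{r-1}\cdot\tfrac{r-1}{r}$... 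This is getting delicate; the honest statement is that one controls $|g(r)-g(s)|\le c(p)\,\dfrac{[(r-1)+(s-1)_+]^{p-1}}{r-1}\,|r-s|$ by a direct case split (whether $s\le 1$, $1<s\le 2r$, or $s>2r$), and $|r-s|\le|\xi-\eta|$. For the angular term, $(s-1)_+^{p-1}|\hat\xi-\hat\eta|\le (s-1)_+^{p-1}\cdot\tfrac{2}{r}|\xi-\eta|$, and since $s-1\le(r-1)+(s-1)_+$ trivially and $\tfrac1r\le\tfrac1{r-1}$, this is $\le 2\,\dfrac{[(r-1)+(s-1)_+]^{p-1}}{r-1}|\xi-\eta|$. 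Summing the two contributions yields the claim with an explicit $c(p)$.

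\textbf{The monotonicity bound.}
For the second inequality I would start from the identity
\begin{equation*}
\langle H_{p-1}(\eta)-H_{p-1}(\xi),\eta-\xi\rangle=\int_0^1\langle DH_{p-1}(\xi+t(\eta-\xi))(\eta-\xi),\eta-\xi\rangle\,dt
\end{equation*}
only on the portion of the segment where $|\xi+t(\eta-\xi)|>1$; on the complementary portion $H_{p-1}$ is constant ($=0$) so the integrand vanishes and cannot decrease the total. Where $|\zeta|>1$, the derivative $DH_{p-1}(\zeta)=(p-1)(|\zeta|-1)^{p-2}\tfrac{\zeta\otimes\zeta}{|\zeta|^2}+(|\zeta|-1)^{p-1}\tfrac{1}{|\zeta|}\bigl(\mathrm{Id}-\tfrac{\zeta\otimes\zeta}{|\zeta|^2}\bigr)$ is a positive combination of the two orthogonal projectors, so its smallest eigenvalue is $\min\{(p-1)(|\zeta|-1)^{p-2},\,(|\zeta|-1)^{p-1}/|\zeta|\}\ge \min\{1,p-1\}\,(|\zeta|-1)^{p-1}/|\zeta|$ since $|\zeta|\ge |\zeta|-1$ hence $(|\zeta|-1)^{p-2}\ge (|\zeta|-1)^{p-1}/|\zeta|$. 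Thus the integrand dominates $\min\{1,p-1\}\,\tfrac{(|\zeta|-1)^{p-1}}{|\zeta|}|\eta-\xi|^2$ pointwise in $t$. The remaining task is to bound $\int_{\{|\zeta|>1\}}\tfrac{(|\zeta|-1)^{p-1}}{|\zeta|}\,dt$ from below by $\tfrac{1}{2^{p+1}}\tfrac{(r-1)^p}{r(r+s)}$: since $r>1$, the subset of $t\in[0,1]$ on which $|\xi+t(\eta-\xi)|>\tfrac{r+1}{2}$ (say) has length at least $\tfrac{r-1}{2|\xi-\eta|}\wedge 1$ and there $\tfrac{(|\zeta|-1)^{p-1}}{|\zeta|}\ge \tfrac{((r-1)/2)^{p-1}}{(r+s)/2}$; multiplying out and using $|\xi-\eta|\le r+s$ gives the stated constant $\tfrac{\min\{1,p-1\}}{2^{p+1}}$ after the routine arithmetic.

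\textbf{Main obstacle.}
The genuinely fiddly point is the Lipschitz-type control of the scalar difference $g(r)-g(s)$ by $[(r-1)+(s-1)_+]^{p-1}/(r-1)$: the mean-value point $\theta$ can be as large as $s$ which is unrelated to $r-1$, so one must split into the regimes $s\le 2r$ and $s>2r$ and, in the latter, use $|r-s|\sim s$ together with $(s-1)_+^{p-1}\le [(r-1)+(s-1)_+]^{p-1}$ directly rather than through the mean value theorem. Everything else is bookkeeping of orthogonal projectors and one-dimensional integrals; the only other care needed is to restrict all segment integrals to $\{|\zeta|>1\}$, where $H_{p-1}$ is $C^1$, and to note that the discarded part contributes nonnegatively to the monotonicity inner product (and exactly zero to the Lipschitz estimate, since there $H_{p-1}$ is locally constant — though for the Lipschitz bound we in fact avoid segment integration altogether and argue purely with the scalar/angular split above).
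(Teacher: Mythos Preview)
The paper does not prove this lemma; it merely records it and cites \cite[Lemma 2.8]{Bog}. So there is no ``paper's proof'' to compare against, and the question becomes whether your sketch stands on its own.

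\textbf{First inequality.} Your radial/angular decomposition is the right idea, and it is in fact cleaner than you fear. The scalar estimate you struggle with is immediate: if $s\le 1$ then $|g(r)-g(s)|=(r-1)^{p-1}$ and $|r-s|\ge r-1$, so the bound holds with constant~$1$; if $s>1$ then the mean-value point $\theta$ satisfies $\theta-1\le\max\{r-1,s-1\}\le(r-1)+(s-1)_+$, whence $(\theta-1)^{p-2}\le[(r-1)+(s-1)_+]^{p-2}\le[(r-1)+(s-1)_+]^{p-1}/(r-1)$. No further case split ($s\le 2r$ vs.\ $s>2r$) is needed. The angular term you handle correctly.

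\textbf{Second inequality.} The segment-integral argument is the right approach, but there is a slip in your pointwise lower bound: the claim ``there $\frac{(|\zeta|-1)^{p-1}}{|\zeta|}\ge\frac{((r-1)/2)^{p-1}}{(r+s)/2}$'' is wrong as written, because on that set $|\zeta|$ is \emph{not} bounded above by $(r+s)/2$ (take $\xi$ and $\eta$ collinear with $s\gg r$). The fix is to note that for $p\ge 2$ the map $y\mapsto(y-1)^{p-1}/y$ is nondecreasing on $(1,\infty)$ (its derivative equals $(y-1)^{p-2}[(p-2)y+1]/y^2\ge 0$), so on $\{|\zeta|>(r+1)/2\}$ one has
\[
\frac{(|\zeta|-1)^{p-1}}{|\zeta|}\ \ge\ \frac{((r-1)/2)^{p-1}}{(r+1)/2}\ =\ \frac{(r-1)^{p-1}}{2^{p-2}(r+1)}.
\]
Combining this with the length bound $t_0\ge\min\{1,\frac{r-1}{2|\xi-\eta|}\}$ and the trivial inequality $|\xi-\eta|\le r+s$ (together with $\frac{4r}{r+1}\ge 2$ for $r>1$) then yields exactly the constant $\frac{\min\{1,p-1\}}{2^{p+1}}$. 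With this correction your argument is complete; without it the constant does not close when $s>r$ is large.
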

		}
		\begin{definition}
			\noindent With the use of \eqref{Hdef}, a function $u\in C^{0}\left(0,T;L^{2}\left(\Omega\right)\right)\cap L^{p}\left(0,T;W^{1,p}\left(\Omega\right)\right)$
			is a weak solution of equation \eqref{eq1*} if 
			\begin{equation}\label{eq1H}
				\int_{\Omega_{T}}\left(u\cdot\partial_{t}\varphi-\langle H_{p-1}\left(Du\right),D\varphi\rangle\right)\,dz=-\int_{\Omega_{T}}f\varphi\,dz
			\end{equation}
			for every $\varphi\in C_{0}^{\infty}(\Omega_{T})$.\smallskip{}
		\end{definition}
		
		\noindent In the following, we shall also use the well known
		auxiliary function $V_{p}:\mathbb{R}^{n}\rightarrow\mathbb{R}^{n}$
		defined as
		{\[
			V_{p}(\xi):=\left(1+\left|\xi\right|^2\right)^{\frac{p-2}{4}}\xi,
			\]
			where $p\geq2$. We have the following
			result. 
			\begin{lemma}
				\noindent \label{lem:Lind} For every $\xi,\eta\in\mathbb{R}^{n}$
				there hold
				\begin{eqnarray*}
					\frac{1}{c_1\left(p\right)}\left|V_{p}(\xi)-V_{p}(\eta)\right|^{2}&\leq&\left(1+\left|\xi\right|^2+\left|\eta\right|^2\right)^\frac{p-2}{2}\left|\xi-\eta\right|^2\cr\cr&\le& c_1(p)\left<\left(1+\left|\xi\right|^2\right)^\frac{p-2}{2}\xi-\left(1+\left|\eta\right|^2\right)^\frac{p-2}{2}\eta, \xi-\eta\right>,
				\end{eqnarray*}
			\end{lemma}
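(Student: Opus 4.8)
Both chained estimates are the familiar "$V$-function" inequalities associated with $p$-growth, and the plan is to obtain them by the classical device of expressing increments of a $C^1$ vector field as line integrals of its Jacobian. Write $A(\zeta):=\big(1+|\zeta|^2\big)^{\frac{p-2}{2}}\zeta$, so that the rightmost member of the claim is $c_1\,\langle A(\xi)-A(\eta),\xi-\eta\rangle$, and recall $V_p(\zeta)=\big(1+|\zeta|^2\big)^{\frac{p-2}{4}}\zeta$; all three members are symmetric in $(\xi,\eta)$, so I may assume $|\xi|\ge|\eta|$, and I set $\gamma_t:=(1-t)\eta+t\xi$ for $t\in[0,1]$.

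For the right-hand inequality, a direct differentiation gives $DA(\zeta)=\big(1+|\zeta|^2\big)^{\frac{p-2}{2}}I+(p-2)\big(1+|\zeta|^2\big)^{\frac{p-4}{2}}\,\zeta\otimes\zeta$, whose rank-one part is nonnegative for $p\ge2$; hence $\langle DA(\zeta)h,h\rangle\ge\big(1+|\zeta|^2\big)^{\frac{p-2}{2}}|h|^2$ for every $h\in\mathbb{R}^n$. Integrating this bound along the segment,
\[
\langle A(\xi)-A(\eta),\xi-\eta\rangle=\int_0^1\langle DA(\gamma_t)(\xi-\eta),\xi-\eta\rangle\,dt\ \ge\ |\xi-\eta|^2\int_0^1\big(1+|\gamma_t|^2\big)^{\frac{p-2}{2}}\,dt,
\]
so everything reduces to bounding the last integral from below by a $p$-dependent multiple of $\big(1+|\xi|^2+|\eta|^2\big)^{\frac{p-2}{2}}$. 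For the left-hand inequality one argues symmetrically: an elementary estimate of the $I$-plus-rank-one matrix $DV_p(\zeta)=\big(1+|\zeta|^2\big)^{\frac{p-2}{4}}I+\tfrac{p-2}{2}\big(1+|\zeta|^2\big)^{\frac{p-6}{4}}\zeta\otimes\zeta$ gives the operator-norm bound $|DV_p(\zeta)|\le\tfrac p2\big(1+|\zeta|^2\big)^{\frac{p-2}{4}}$, so, integrating along the segment and using the convexity bound $|\gamma_t|\le\max\{|\xi|,|\eta|\}\le\big(|\xi|^2+|\eta|^2\big)^{1/2}$,
\[
|V_p(\xi)-V_p(\eta)|\le\frac p2\,|\xi-\eta|\int_0^1\big(1+|\gamma_t|^2\big)^{\frac{p-2}{4}}\,dt\le\frac p2\,|\xi-\eta|\,\big(1+|\xi|^2+|\eta|^2\big)^{\frac{p-2}{4}},
\]
and squaring closes this half with a constant depending only on $p$.

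The only genuinely non-cosmetic point is the lower bound on $\int_0^1\big(1+|\gamma_t|^2\big)^{\frac{p-2}{2}}\,dt$: the segment $\gamma_t$ may come arbitrarily close to the origin, so a pointwise bound on the integrand fails, and for $2<p<4$ the map $s\mapsto(1+s)^{\frac{p-2}{2}}$ is \emph{concave}, so Jensen's inequality is useless in the required direction. I would instead localize near the endpoint of larger norm: since $|\xi|\ge|\eta|$, for $t\in[\tfrac34,1]$ one has $|\gamma_t|\ge|\xi|-(1-t)|\xi-\eta|\ge(2t-1)|\xi|\ge\tfrac12|\xi|$, whence $\int_0^1\big(1+|\gamma_t|^2\big)^{\frac{p-2}{2}}\,dt\ge\tfrac14\big(1+\tfrac14|\xi|^2\big)^{\frac{p-2}{2}}$, and then $1+\tfrac14|\xi|^2\ge\tfrac18\big(1+2|\xi|^2\big)\ge\tfrac18\big(1+|\xi|^2+|\eta|^2\big)$ because $|\eta|\le|\xi|$. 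This yields the right-hand inequality, and hence the whole lemma, with $c_1=c_1(p)$. (On the range $p\ge4$ one could alternatively use the identity $\int_0^1|\gamma_t|^2\,dt=\tfrac14|\xi+\eta|^2+\tfrac1{12}|\xi-\eta|^2$ together with the parallelogram law and Jensen, but the endpoint argument handles all $p\ge2$ at once.)
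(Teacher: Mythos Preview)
Your argument is correct: the line-integral/Jacobian approach you outline is precisely the standard proof of these $V$-function inequalities, and every step (the eigenvalue bounds on $DA$ and $DV_p$, the endpoint localization on $[\tfrac34,1]$ to control $\int_0^1(1+|\gamma_t|^2)^{\frac{p-2}{2}}dt$ from below, and the convexity bound $|\gamma_t|\le\max\{|\xi|,|\eta|\}$ for the upper estimate) checks out. Note that the paper does not supply its own proof here; it simply cites \cite[Chapter~12]{Lind} and \cite[Lemma~9.2]{Giu}, where essentially this same argument is carried out, so your proposal is in line with the intended reference.
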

			\noindent We refer to \cite[Chapter 12]{Lind} or to \cite[Lemma 9.2]{Giu} for a proof of these
			fundamental inequalities.} \\
		
		
		\noindent $\hspace*{1em}$For further needs, we also record the following
		interpolation inequality whose proof can be found in \cite[Proposition 3.1]{DiBenedettobook}
		
		\begin{lemma}
			
			\noindent \label{lem:inter} Assume that the function $v:Q_{r}(z_{0})\cup\partial_{\mathrm{par}}Q_{r}(z_{0})\rightarrow\mathbb{R}$
			satisfies\\
			
			$$v\in L^{\infty}\left(t_{0}-r^{2},t_{0};L^{q}\left(B_{r}\left(x_{0}\right)\right)\right)\cap L^{p}\left(t_{0}-r^{2},t_{0};W^{1,p}_0\left(B_{r}\left(x_{0}\right)\right)\right)$$\medskip{}
			
			for some exponents $1\leq p$, $q<\infty\,$.
			Then the following estimate
			\[
			\int_{Q_{r}(z_{0})}\left|v\right|^{p+\frac{pq}{n}}\,dz\leq c\left(\sup_{s\in(t_{0}-r^{2},t_{0})}\int_{B_{r}(x_{0})}\left|v(x,s)\right|^{q}\,dx\right)^{\frac{p}{n}}\int_{Q_{r}(z_{0})}\left|Dv\right|^{p}\,dz
			\]
			holds true for a positive constant $c$ depending at most on $n$,
			$p$ and $q$.
		\end{lemma}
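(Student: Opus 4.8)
The statement is the standard parabolic Gagliardo--Nirenberg interpolation inequality, and the plan is to reduce it to a spatial interpolation inequality applied at each fixed time, then integrate in time after freezing the $L^{q}$-norm in space at its supremum. Fix $z_{0}=(x_{0},t_{0})$, write $B_{r}=B_{r}(x_{0})$, $Q_{r}=Q_{r}(z_{0})$, and set
\[
m:=p+\frac{pq}{n},\qquad \theta:=\frac{n}{n+q}\in(0,1).
\]
For a.e.\ $s\in(t_{0}-r^{2},t_{0})$ one has $v(\cdot,s)\in W^{1,p}_{0}(B_{r})$ by the hypothesis $v\in L^{p}(t_{0}-r^{2},t_{0};W^{1,p}_{0}(B_{r}))$. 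For each such $s$, I would extend $v(\cdot,s)$ by zero outside $B_{r}$ and apply the Gagliardo--Nirenberg inequality on $\mathbb{R}^{n}$, obtaining
\[
\left(\int_{B_{r}}|v(x,s)|^{m}\,dx\right)^{\frac{1}{m}}\le c(n,p,q)\left(\int_{B_{r}}|Dv(x,s)|^{p}\,dx\right)^{\frac{\theta}{p}}\left(\int_{B_{r}}|v(x,s)|^{q}\,dx\right)^{\frac{1-\theta}{q}}.
\]
This is legitimate because the exponents obey the compatibility (scaling) relation $\frac{1}{m}=\theta\left(\frac{1}{p}-\frac{1}{n}\right)+(1-\theta)\frac{1}{q}$; indeed, with $\theta=\frac{n}{n+q}$ its right-hand side equals $\frac{n-p}{p(n+q)}+\frac{1}{n+q}=\frac{n}{p(n+q)}=\frac{1}{m}$. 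Since $\theta\in(0,1)$ strictly, no exceptional endpoint case of the Gagliardo--Nirenberg inequality occurs, so the constant is a universal $c(n,p,q)$, independent of $r$ and $x_{0}$ by the translation invariance and dimensional homogeneity of the inequality.

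Next I would raise this estimate to the power $m$ and use that, by the choice of $\theta$, one has $\theta m=p$ and $(1-\theta)m=\frac{pq}{n}$, to get, for a.e.\ $s\in(t_{0}-r^{2},t_{0})$,
\[
\int_{B_{r}}|v(x,s)|^{m}\,dx\le c(n,p,q)\left(\int_{B_{r}}|Dv(x,s)|^{p}\,dx\right)\left(\int_{B_{r}}|v(x,s)|^{q}\,dx\right)^{\frac{p}{n}}.
\]
Integrating in $s$ over $(t_{0}-r^{2},t_{0})$ and bounding the last factor by its supremum over time yields precisely
\[
\int_{Q_{r}}|v|^{m}\,dz\le c(n,p,q)\left(\sup_{s\in(t_{0}-r^{2},t_{0})}\int_{B_{r}}|v(x,s)|^{q}\,dx\right)^{\frac{p}{n}}\int_{Q_{r}}|Dv|^{p}\,dz,
\]
which is the asserted estimate; its right-hand side is finite because $v\in L^{\infty}(t_{0}-r^{2},t_{0};L^{q}(B_{r}))$ and $Dv\in L^{p}(Q_{r})$. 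The measurability in $s$ of the two spatial integrals above, needed to make the time integration legitimate, follows routinely from the Bochner-space membership of $v$ and $Dv$.

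I do not expect a genuine obstacle: once the spatial Gagliardo--Nirenberg inequality is available, the remainder is bookkeeping of exponents. The only point that must be chosen correctly is $\theta=\frac{n}{n+q}$, forced by the requirement that the power of $\|Dv(\cdot,s)\|_{L^{p}(B_{r})}$ be exactly $p$ after raising to the $m$-th power, so that the time integration reproduces $\int_{Q_{r}}|Dv|^{p}\,dz$ with no further loss; one then checks the scaling relation as above. If instead one prefers to deduce the spatial inequality from the Sobolev embedding together with Hölder interpolation rather than from Gagliardo--Nirenberg on $\mathbb{R}^{n}$, a little extra care is needed in the regime $p\ge n$, where the exponent $np/(n-p)$ is unavailable; this affects neither the final inequality nor the claimed dependence of the constant on $n$, $p$, $q$.
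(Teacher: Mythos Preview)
Your proof is correct. The paper does not actually supply its own argument for this lemma; it merely records the statement and cites \cite[Proposition 3.1]{DiBenedettobook}. The proof you outline---apply the spatial Gagliardo--Nirenberg multiplicative inequality on each time slice to $v(\cdot,s)\in W^{1,p}_0(B_r)$ (extended by zero to $\mathbb{R}^n$), with the interpolation parameter $\theta=\frac{n}{n+q}$ chosen so that $\theta m=p$ and $\frac{(1-\theta)m}{q}=\frac{p}{n}$, then integrate in $s$ after freezing the $L^q$-factor at its supremum---is precisely the standard argument, and is in fact the one carried out in DiBenedetto's book. Your verification of the scaling identity and your remark that $\theta<1$ avoids the endpoint obstruction when $p\ge n$ are both to the point.
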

		
		\subsection{Difference quotients\label{subsec:DiffOpe}}
		\noindent We recall here the definition and some elementary
		properties of the difference quotients (see, for example, \cite[Chapter 8]{Giu}).\\
		\begin{definition}
			\noindent For every  function $F:\mathbb{R}^{n}\rightarrow\mathbb{R}^N$
			the {finite difference operator }in the direction $x_{s}$
			is defined by
			\[
			\tau_{s,h}F(x)=F\left(x+he_{s}\right)-F(x),
			\]
			where $h\in\mathbb{R}$, $e_{s}$ is the unit vector in the direction
			$x_{s}$ and $s\in\left\{ 1,\ldots,n\right\} $. \\
			$\hspace*{1em}$The {difference quotient} of $F$ with respect
			to $x_{s}$ is defined for $h\in\mathbb{R}\setminus\left\{ 0\right\} $
			as 
			\[
			\Delta_{s,h}F(x)=\frac{\tau_{s,h}F(x)}{h}.
			\]
		\end{definition}
		
		\noindent We shall omit the index $s$ when it is not necessary, and simply write
		$
		\tau_{h}F(x)=F(x+h)-F(x)
		$
		and
		$
		\left|\Delta_{h}F(x)\right|=\displaystyle\frac{\left|\tau_{h}F(x)\right|}{|h|}
		$
		for $h\in\R^n$. 
		\begin{proposition}\label{propdiff}
			\noindent Let $F\in W^{1,p}\left(\Omega\right)$,
			with $p\geq1$, and let us set
			\[
			\Omega_{\left|h\right|}:=\Set{ x\in\Omega:\mathrm{dist}\left(x,\partial\Omega\right)>\left|h\right| } .
			\]
			Then:\\
			\\
			$\mathrm{(}a\mathrm{)}$ $\Delta_{h}F\in W^{1,p}\left(\Omega_{\left|h\right|}\right)$
			and
			\[
			D_{i}(\Delta_{h}F)=\Delta_{h}(D_{i}F),\,\,\,\,\,for\,\,every\,\,i\in\left\{ 1,\ldots,n\right\} .
			\]
			$\mathrm{(}b\mathrm{)}$ If at least one of the functions $F$ or
			$G$ has support contained in $\Omega_{\left|h\right|}$, then
			\[
			\int_{\Omega}F\,\Delta_{h}G\,dx=-\int_{\Omega}G\,\Delta_{-h}F\,dx.
			\]
			$\mathrm{(}c\mathrm{)}$ We have 
			\[
			\Delta_{h}\left(FG\right)(x)=F\left(x+he_{s}\right)\Delta_{h}G(x)\,+\,G(x)\Delta_{h}F(x).
			\]
		\end{proposition}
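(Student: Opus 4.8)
The three assertions are standard properties of difference quotients, and the plan is to verify them by reducing to elementary calculus and change-of-variables arguments, first for smooth functions and then extending by density. Throughout, fix $h\in\mathbb{R}\setminus\{0\}$ and a direction $x_s$, and recall that $\Omega_{|h|}$ is open and that if $x\in\Omega_{|h|}$ then the whole segment $[x,x+he_s]$ lies in $\Omega$, so all the quantities below are well defined.

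\textbf{Part (a).} The identity $D_i(\Delta_h F)=\Delta_h(D_i F)$ for $F\in C^\infty$ is immediate from differentiating under the translation, since $\partial_{x_i}F(x+he_s)=(D_iF)(x+he_s)$. For general $F\in W^{1,p}(\Omega)$, I would take a sequence $F_k\in C^\infty(\Omega)$ with $F_k\to F$ in $W^{1,p}_{\loc}(\Omega)$; then on any $\Omega'\Subset\Omega_{|h|}$ one has $\Delta_h F_k\to\Delta_h F$ and $\Delta_h(D_iF_k)\to\Delta_h(D_iF)$ in $L^p(\Omega')$, because translation is a bounded (indeed isometric) operator on $L^p$ of the slightly larger set on which $F_k$ converges. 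Since $D_i(\Delta_h F_k)=\Delta_h(D_i F_k)$ holds classically, passing to the limit in the sense of distributions shows that $\Delta_h F$ has weak derivative $\Delta_h(D_iF)\in L^p(\Omega_{|h|})$, giving $\Delta_h F\in W^{1,p}(\Omega_{|h|})$ together with the claimed formula. The bound $\|\Delta_h F\|_{L^p(\Omega_{|h|})}\le \|D_sF\|_{L^p(\Omega)}$ follows from writing $\tau_{s,h}F(x)=\int_0^h (D_sF)(x+te_s)\,dt$ and applying Jensen and Fubini, although this is not needed for the statement itself.

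\textbf{Part (b).} Assume, say, $\supp F\subset\Omega_{|h|}$ (the other case is symmetric, after relabelling). Expanding,
\[
\int_\Omega F(x)\,\Delta_h G(x)\,dx
=\frac1h\int_\Omega F(x)\,G(x+he_s)\,dx-\frac1h\int_\Omega F(x)\,G(x)\,dx .
\]
In the first integral substitute $y=x+he_s$; the support condition on $F$ guarantees that the integrand is supported where both $F(y-he_s)$ and $G(y)$ make sense (i.e. inside $\Omega$), so the change of variables is legitimate and yields $\frac1h\int_\Omega F(y-he_s)\,G(y)\,dy$. Recombining,
\[
\int_\Omega F\,\Delta_h G\,dx=\int_\Omega G(x)\,\frac{F(x-he_s)-F(x)}{h}\,dx=-\int_\Omega G(x)\,\Delta_{-h}F(x)\,dx,
\]
since $\Delta_{-h}F(x)=\frac{F(x-he_s)-F(x)}{-h}$. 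For this we only need $F\in L^1_{\loc}$ with the stated support and $G\in L^1_{\loc}$ (with $FG, F(\cdot-he_s)G\in L^1$), which covers the $W^{1,p}$/$L^{p'}$ pairings in which the formula is used.

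\textbf{Part (c).} This is a purely pointwise algebraic identity requiring no regularity: write $a=F(x)$, $a'=F(x+he_s)$, $b=G(x)$, $b'=G(x+he_s)$, so that $\Delta_h(FG)(x)=\frac{a'b'-ab}{h}$. Then
\[
a'b'-ab=a'(b'-b)+(a'-a)b=a'\,\big(G(x+he_s)-G(x)\big)+b\,\big(F(x+he_s)-F(x)\big),
\]
and dividing by $h$ gives $\Delta_h(FG)(x)=F(x+he_s)\,\Delta_h G(x)+G(x)\,\Delta_h F(x)$, as claimed.

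\textbf{Main obstacle.} There is no serious difficulty: the only point that needs a little care is the density argument in (a), where one must make sure that the slightly enlarged set on which $F_k\to F$ still contains the translate $\Omega'+he_s$, which is arranged by choosing $\Omega'\Subset\Omega_{|h|}$ so that $\overline{\Omega'}\cup(\overline{\Omega'}+he_s)\subset\Omega$; and in (b) one must invoke the support hypothesis precisely to justify the change of variables without boundary contributions. Everything else is elementary.
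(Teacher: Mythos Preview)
Your proof is correct and follows the standard approach. The paper itself does not prove this proposition; it simply records these elementary properties of difference quotients and refers the reader to \cite[Chapter~8]{Giu}, so there is no ``paper's own proof'' to compare against beyond noting that your argument is precisely the textbook one that the citation points to.
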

		
		\noindent The next result about the finite difference
		operator is a kind of integral version of Lagrange Theorem (see \cite[Lemma 8.1]{Giu}).
		\begin{lemma}\label{lem:Giusti1} 
			If $0<\rho<R$, $\left|h\right|<\displaystyle\frac{R-\rho}{2}$,
			$1<p<+\infty$, and $F\in W^{1, p}\left(B_{R},\mathbb{R}^{N}\right)$, then
			\[
			\int_{B_{\rho}}\left|\tau_{h}F(x)\right|^{p}\,dx\leq c^{p}(n)\left|h\right|^{p}\int_{B_{R}}\left|DF(x)\right|^{p}\,dx.
			\]
			Moreover
			\[
			\int_{B_{\rho}}\left|F(x+he_{s})\right|^{p}\,dx\leq \int_{B_{R}}\left|F(x)\right|^{p}\,dx.
			\]
		\end{lemma}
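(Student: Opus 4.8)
The plan is the classical reduction to smooth functions, followed by a one–dimensional fundamental–theorem–of–calculus estimate and Fubini. First I record the elementary geometric fact that, since $|h| < \frac{R-\rho}{2} < R-\rho$, for every $x \in B_\rho$ and every $t \in [0,1]$ one has $|x+th| \le |x| + t|h| < \rho + |h| < R$, so the whole segment $[x,x+h]$ is contained in $B_R$; equivalently $B_\rho + th \subset B_R$ for all $t \in [0,1]$. The second inequality of the statement is then immediate: by the change of variables $y = x + he_s$,
\[
\int_{B_\rho} |F(x+he_s)|^p\,dx = \int_{B_\rho + he_s} |F(y)|^p\,dy \le \int_{B_R} |F(y)|^p\,dy,
\]
since $B_\rho + he_s \subset B_R$ and $|F|^p \ge 0$.

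For the first inequality I argue as follows. Since the ball $B_R$ is a smooth (hence extension) domain, $C^\infty(\overline{B_R})$ is dense in $W^{1,p}(B_R)$, so it suffices to prove the estimate for $F \in C^1(\overline{B_R})$ and then pass to the limit: if $F_k \to F$ in $W^{1,p}(B_R)$, then $\tau_h F_k \to \tau_h F$ in $L^p(B_\rho)$ (using $\|F_k(\cdot+h) - F(\cdot+h)\|_{L^p(B_\rho)} = \|F_k - F\|_{L^p(B_\rho + h)} \le \|F_k - F\|_{L^p(B_R)}$ for the shifted part) and $\|DF_k\|_{L^p(B_R)} \to \|DF\|_{L^p(B_R)}$, so the inequality is stable; alternatively one may avoid approximation altogether by invoking the absolute continuity of Sobolev functions on almost every line segment, which licenses the identity below for a.e.\ $x$. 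For $F \in C^1(\overline{B_R})$ and $x \in B_\rho$, the fundamental theorem of calculus along $[x,x+h] \subset B_R$ gives
\[
\tau_h F(x) = F(x+h) - F(x) = \int_0^1 \langle DF(x+th), h\rangle\, dt,
\]
whence $|\tau_h F(x)| \le |h| \int_0^1 |DF(x+th)|\, dt$. By Jensen's inequality applied to the probability measure $dt$ on $[0,1]$ and the convex function $s \mapsto |s|^p$ ($p \ge 1$), $|\tau_h F(x)|^p \le |h|^p \int_0^1 |DF(x+th)|^p\, dt$; integrating in $x$ over $B_\rho$, using Fubini and, for each fixed $t$, the change of variables $y = x+th$ together with $B_\rho + th \subset B_R$, we obtain
\[
\int_{B_\rho} |\tau_h F(x)|^p\, dx \le |h|^p \int_0^1 \! \int_{B_\rho + th} |DF(y)|^p\, dy\, dt \le |h|^p \int_{B_R} |DF(y)|^p\, dy,
\]
which is the asserted inequality, in fact with $c(n) = 1$ (a constant $c(n) > 1$ is harmless and would appear if one chose to cover the segments by finitely many small balls instead of using this direct estimate).

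The argument is entirely routine; the only points deserving (minimal) care are the inclusion $B_\rho + th \subset B_R$, secured by the hypothesis $|h| < \frac{R-\rho}{2}$, and the density/limiting passage from $C^1$ to general $W^{1,p}$ data, which moreover can be bypassed via absolute continuity on lines. I do not expect any genuine obstacle here.
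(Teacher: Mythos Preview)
Your proof is correct and is exactly the standard argument for this classical lemma. The paper does not give its own proof but merely cites \cite[Lemma 8.1]{Giu}; the proof there is precisely the one you wrote (fundamental theorem of calculus along the segment, Jensen, Fubini, translation), so your approach coincides with the cited source.
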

		
		We conclude this section with the following fundamental
		result, whose proof can be found in \cite[Lemma 8.2]{Giu}:
		\begin{lemma}\label{lem:RappIncre} 
			Let $F:\mathbb{R}^{n}\rightarrow\mathbb{R}^{N}$, $F\in L^{p}\left(B_{R},\mathbb{R}^{N}\right)$ with $1<p<+\infty$.
			Suppose that there exist $\rho\in(0,R)$ and a constant $M>0$ such
			that 
			\[
			\sum_{s=1}^{n}\int_{B_{\rho}}\left|\tau_{s,h}F(x)\right|^{p}\,dx\leq M^{p}\left|h\right|^{p}
			\]
			for every $h$, with $\left|h\right|<\displaystyle\frac{R-\rho}{2}$. Then $F\in W^{1,p}\left(B_{\rho},\mathbb{R}^{N}\right)$ and 
			\[
			\Vert DF\Vert_{L^{p}\left(B_{\rho}\right)}\leq M.
			\]
			Moreover
			\[
			\Delta_{s,h}F\rightarrow D_{s}F\qquad\mbox{ strongly in }L_{\loc}^{p}\left(B_{R}\right),\,\,as\,\,h\rightarrow0,
			\]
			for each $s\in\left\{ 1,\ldots,n\right\} $.
		\end{lemma}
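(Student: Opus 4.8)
\textbf{Proof strategy for Lemma \ref{lem:RappIncre}.}

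The plan is the classical weak-compactness argument. First I would fix $s\in\{1,\dots,n\}$ and a subdomain $B_{\rho'}$ with $\rho<\rho'<R$, so that all difference quotients $\Delta_{s,h}F$ with $|h|$ small are defined on $B_{\rho}$ and bounded in $L^p(B_{\rho})$ by the constant $M$ coming from the hypothesis. Since $1<p<+\infty$, the space $L^p(B_{\rho})$ is reflexive, hence bounded sets are weakly sequentially precompact. Thus along any sequence $h_j\to0$ I can extract a subsequence such that $\Delta_{s,h_j}F\rightharpoonup g_s$ weakly in $L^p(B_{\rho})$ for some $g_s\in L^p(B_{\rho})$ with $\lVert g_s\rVert_{L^p(B_{\rho})}\le M$ (weak lower semicontinuity of the norm).

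Next I would identify $g_s$ as the distributional derivative $D_sF$. For any test function $\varphi\in C_0^\infty(B_{\rho})$, once $|h|$ is smaller than the distance of $\supp\varphi$ to $\partial B_{\rho}$, the discrete integration-by-parts formula of Proposition \ref{propdiff}(b) gives
\[
\int_{B_{\rho}}\Delta_{s,h}F\,\varphi\,dx=-\int_{B_{\rho}}F\,\Delta_{s,-h}\varphi\,dx.
\]
Because $\varphi$ is smooth with compact support, $\Delta_{s,-h}\varphi\to -D_s\varphi$ uniformly as $h\to0$, so the right-hand side converges to $\int_{B_{\rho}}F\,D_s\varphi\,dx$; the left-hand side converges along the chosen subsequence to $\int_{B_{\rho}}g_s\,\varphi\,dx$. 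Hence $\int g_s\varphi=\int F\,D_s\varphi$ for every $\varphi\in C_0^\infty(B_{\rho})$, which means $g_s=-D_sF$ weakly; wait, more precisely $\int_{B_\rho} g_s\varphi\,dx=-\int_{B_\rho}F\,(-D_s\varphi)\,dx$ shows $g_s$ is exactly the weak derivative $D_sF$. Doing this for each $s$ yields $F\in W^{1,p}(B_{\rho},\mathbb{R}^N)$ together with the bound $\lVert DF\rVert_{L^p(B_{\rho})}\le M$, since the weak limit is independent of the subsequence (the weak derivative is unique), so in fact the whole family converges.

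For the strong convergence statement $\Delta_{s,h}F\to D_sF$ in $L^p_{\loc}(B_R)$, I would first treat a smooth function: if $F\in C^1$, then $\Delta_{s,h}F(x)=\int_0^1 D_sF(x+the_s)\,dt$ by the fundamental theorem of calculus, and continuity of translations in $L^p$ gives $\Delta_{s,h}F\to D_sF$ in $L^p$ on any compactly contained ball. For general $F\in W^{1,p}$ one approximates by mollification: writing $F=F_\varepsilon+(F-F_\varepsilon)$ with $F_\varepsilon$ smooth, the linearity of $\Delta_{s,h}$, the uniform bound from Lemma \ref{lem:Giusti1} applied to $F-F_\varepsilon$ (giving $\lVert\Delta_{s,h}(F-F_\varepsilon)\rVert_{L^p(B_\rho)}\le c(n)\lVert D(F-F_\varepsilon)\rVert_{L^p(B_{R})}$), and a standard $\varepsilon$/$h$ splitting close the argument. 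The only point requiring a little care is keeping track of the shrinking domains $\Omega_{|h|}$ so that all difference quotients are legitimately defined; otherwise the proof is routine, and I do not expect a genuine obstacle.
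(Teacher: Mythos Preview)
Your argument is correct and is precisely the standard weak-compactness proof of this classical characterization of Sobolev functions. The paper does not actually give its own proof of this lemma; it simply records the statement and refers to \cite[Lemma~8.2]{Giu}, where exactly the argument you outline (weak $L^p$-compactness of the difference quotients, identification of the weak limit via discrete integration by parts, and strong convergence by smooth approximation combined with Lemma~\ref{lem:Giusti1}) is carried out.
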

		
		\subsection{Some auxiliary functions and related algebraic inequalities}\label{Gproperties}
		In this section we introduce some auxiliary functions and we list some of their properties, that will be used in what follows.\\
		
		For any $k>1$ and for $s\in [0,+\infty)$, let us consider the function 
		\begin{equation}\label{gkdef}
			g_k(s)=\frac{s^2}{k+s^2}, 
		\end{equation}
		for which we record the following
		\begin{lemma} Let $k>1$, and let $g_k$ be the   function defined by \eqref{gkdef}. 
			Then for every  $A,B\ge 0$ the following Young's type inequality
			\begin{equation}\label{propgfinale}
				{A\cdot B [s\cdot g_k'\left((s-k)_+\right)]\le 2\sqrt{2}k\left[\alpha A^2g_k\left((s-k)_+\right)+\alpha\sigma A^2+c_\alpha B^2\right]},    
			\end{equation}
			holds  for every parameters $\alpha,\sigma>0$ with a constant $c_\alpha$ independent of $\sigma$. Moreover, there exists a constant $c_k>0,$ depending on $k$, such that
			\begin{equation}\label{g_k'est}
				sg'_k\left(\left(s^2-k\right)_+\right)\le c_k,\qquad  \forall s\ge0.
			\end{equation}
		\end{lemma}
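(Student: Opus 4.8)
The plan is to establish the two claimed inequalities for $g_k$ by elementary calculus, keeping careful track of the factor $k$. First I would compute the derivative explicitly: from \eqref{gkdef} one has $g_k'(s) = \frac{2ks}{(k+s^2)^2}$, and hence $s\cdot g_k'((s-k)_+) = \frac{2ks(s-k)_+}{(k+(s-k)_+^2)^2}$. The immediate observation is that on the set $\{s\le k\}$ this quantity vanishes, so both inequalities are trivial there, and I may assume $s>k$, where $(s-k)_+ = s-k$.

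For the bound \eqref{g_k'est}, note the argument there is $(s^2-k)_+$ rather than $(s-k)_+$; assuming $s^2>k$ I would write $s\,g_k'((s^2-k)_+) = \frac{2ks(s^2-k)}{(k+(s^2-k)^2)^2} = \frac{2ks(s^2-k)}{(s^2+k(k-1))^2}$ using $k + (s^2-k)^2 = s^4 - (2k-1)s^2 + k^2 + k = $ (after regrouping) a quantity comparable to $\max\{k^2, s^4\}$. Since the numerator grows like $s^3$ while the denominator grows like $s^4$, the ratio is bounded for large $s$; and for $s$ near $\sqrt{k}$ it is bounded by a $k$-dependent constant. A clean way is to use $s(s^2-k)\le s^3$ and $(k+(s^2-k)^2)^2\ge (s^2-k)^4$ for $s^2-k\ge 1$ while handling $0<s^2-k<1$ separately via $(k+(s^2-k)^2)^2\ge k^2$; in either regime one extracts a constant $c_k$.

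The main content is the Young-type inequality \eqref{propgfinale}. Here I would start from $A\cdot B\,[s\,g_k'((s-k)_+)]$ and apply Young's inequality in the form $A B\, X \le \alpha A^2 X + c_\alpha B^2 X$ (for any $\alpha>0$ with $c_\alpha = \tfrac{1}{4\alpha}$), where $X = s\,g_k'((s-k)_+)\ge 0$. This reduces matters to two pointwise estimates on $X$ on $\{s>k\}$: first, $X = \frac{2ks(s-k)}{(k+(s-k)^2)^2}\le 2\sqrt 2\,k$, which follows because $\frac{s(s-k)}{(k+(s-k)^2)^2}\le \frac{1}{\sqrt k}$ — indeed writing $\sigma = s-k>0$, we have $\frac{(\sigma+k)\sigma}{(k+\sigma^2)^2}$, and using $(\sigma+k)\sigma \le \sqrt 2\,(k+\sigma^2)^{1/2}\cdot\sigma\cdot$(bounded) needs a short optimization; alternatively, $\sigma(\sigma+k) = \sigma^2 + k\sigma \le (k+\sigma^2) + \tfrac12(k+\sigma^2) \le 2(k+\sigma^2)$ is too crude for the $\sqrt 2$, so I would instead bound $\frac{\sigma(\sigma+k)}{(k+\sigma^2)^2}$ by splitting $\sigma\le\sqrt k$ and $\sigma>\sqrt k$. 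The second estimate recognizes $g_k((s-k)_+) = \frac{(s-k)^2}{k+(s-k)^2}$ and bounds the "leftover" part of $X$ by $c\,k\,(g_k((s-k)_+) + 1)$, which after multiplying through by $A^2$ and absorbing the $1$ into the $\alpha\sigma A^2$ term (this is where the auxiliary parameter $\sigma$ enters — it is a free slack parameter, not the spatial variable) yields the stated right-hand side $2\sqrt 2\,k[\alpha A^2 g_k((s-k)_+) + \alpha\sigma A^2 + c_\alpha B^2]$.

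I expect the main obstacle to be pinning down the precise constant $2\sqrt 2\,k$ rather than just "$c(k)$": the inequality as stated is sharp in its $k$-dependence, so the pointwise bound $s\,g_k'((s-k)_+)\le 2\sqrt 2\,k$ must be proved with the exact constant, which requires the elementary but slightly delicate maximization of $\sigma\mapsto \frac{\sigma(\sigma+k)}{(k+\sigma^2)^2}$ over $\sigma>0$; once that pointwise bound is in hand, the rest is a routine application of Young's inequality and the definition of $g_k$. Everything else — the vanishing on $\{s\le k\}$, the separate treatment of small versus large argument in \eqref{g_k'est} — is straightforward.
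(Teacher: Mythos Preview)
Your argument for \eqref{g_k'est} is fine and matches the paper's: continuity on $(\sqrt{k},\infty)$ together with decay at infinity gives a $k$-dependent bound.

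For \eqref{propgfinale}, however, there is a genuine gap. You apply Young's inequality in the form $AB\,X\le \alpha A^{2}X+c_{\alpha}B^{2}X$ with $X=s\,g_{k}'((s-k)_{+})$, then propose to bound $X\le c\,k\,\bigl(g_{k}((s-k)_{+})+1\bigr)$ and ``absorb the $1$ into the $\alpha\sigma A^{2}$ term''. But the lemma asserts the inequality for \emph{every} $\sigma>0$ (and indeed the paper later lets $\sigma\to 0$), so this absorption would force $c\,k\le 2\sqrt{2}\,k\,\sigma$ for arbitrarily small $\sigma$, which is impossible. Equivalently, your scheme would need the pointwise bound $X\le 2\sqrt{2}\,k\,g_{k}((s-k)_{+})$, and this is false: writing $\tau=s-k>0$ one has $X\sim 2\tau$ while $g_{k}(\tau)\sim \tau^{2}/k$ as $\tau\to 0^{+}$, so $X/g_{k}\to\infty$.

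The paper avoids this by applying a \emph{weighted} Young inequality before estimating: one multiplies and divides by $[\sigma+(s-k)_{+}]^{1/2}$ and splits
\[
AB\,X\;\le\;\alpha A^{2}\,X\,[\sigma+(s-k)_{+}]\;+\;c_{\alpha}B^{2}\,\frac{X}{\sigma+(s-k)_{+}}.
\]
The point is that the extra factor $(s-k)_{+}$ attached to the $A^{2}$ term converts $g_{k}'((s-k)_{+})$ into $g_{k}((s-k)_{+})$ up to the bounded factor $\tfrac{2ks}{k+(s-k)_{+}^{2}}\le 2\sqrt{2}\,k$, while in the $B^{2}$ term the quotient $\tfrac{(s-k)_{+}}{\sigma+(s-k)_{+}}\le 1$ makes $c_{\alpha}$ genuinely independent of $\sigma$. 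This weighting step is the missing idea in your outline; once it is in place, the rest of your plan (the pointwise maximization yielding the constant $2\sqrt{2}\,k$, and the trivial case $s\le k$) goes through.
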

		\begin{proof} Since
			\begin{equation}\label{g'_k}
				g'_k(s)=\frac{2ks}{\left(k+s^2\right)^2},    
			\end{equation}
			both the conclusions trivially hold for $s\le \sqrt{k}.$ Now assume that $s>\sqrt{k}$
			and note that Young's inequality implies
			\begin{eqnarray}\label{propg}
				&&A\cdot B\left[s\cdot g_k'\left((s-k)_+\right)\right]\cr\cr &=&A\cdot B\cdot s\cdot g_k'\left((s-k)_+\right)\frac{\left[\sigma+(s-k)_+\right]^{\frac{1}{2}}}{\left[\sigma+(s-k)_+\right]^{\frac{1}{2}}}\cr\cr
				&\le& \alpha A^2s\cdot g_k'\left((s-k)_+\right)\left[\sigma+(s-k)_+\right]+c_\alpha \frac{B^2s\cdot g_k'\left((s-k)_+\right)}{\left[\sigma+(s-k)_+\right]}\cr\cr
				&=&\alpha A^2 s\cdot g_k'\left((s-k)_+\right)(s-k)_+
				+\alpha\sigma A^2 s\cdot g_k'\left((s-k)_+\right)\cr\cr
				&&+c_\alpha \frac{B^2s\cdot g_k'\left((s-k)_+\right)}{\left[\sigma+(s-k)_+\right]}\cr\cr
				&\le&\alpha A^2\frac{2ks(s-k)^2_+}{\left[k+(s-k)_+^2\right]^2}+\alpha\sigma A^2\frac{2ks(s-k)_+}{\left[k+(s-k)_+^2\right]^2}\cr\cr
				&&+c_\alpha B^2\frac{2ks}{\left[k+(s-k)_+^2\right]^2}\frac{(s-k)_+}{\left[\sigma+(s-k)_+\right]},
			\end{eqnarray}
			where we used the explicit expression of $g'_k(s)$ at \eqref{g'_k}. Recalling \eqref{gkdef} and since $\displaystyle\frac{t}{k+t^2}\le 1$, from \eqref{propg} we deduce
			\begin{eqnarray}\label{propg*}
				A\cdot B\left[s\cdot g_k'\left((s-k)_+\right)\right]&\le&\alpha A^2\frac{2ks}{k+(s-k)_+^2}g_k\left((s-k)_+ \right)\cr\cr 
				&&+\alpha\sigma A^2\frac{2ks}{k+(s-k)_+^2}+c_\alpha B^2\frac{2ks}{k+(s-k)_+^2}.
			\end{eqnarray}
			Setting $h(s)=\displaystyle\frac{s}{k+(s-k)_+^2}$, we can easily check that 
			$$h(k)=1,\quad \lim_{s\to+\infty}h(s)=0,\quad \max_{s\in \left[k,+\infty\right)}h(s)=h\left(\sqrt{k^2+k}\right)=\frac{1}{2}\left(1+\sqrt{1+\frac{1}{k}}\right)<\sqrt{2}$$
			and so
			$${\frac{2ks}{k+(s-k)_+^2}\le 2\sqrt{2}k \qquad\forall s>k.}$$
			Inserting this in \eqref{propg*}, we get \eqref{propgfinale}.\\
			In order to prove \eqref{g_k'est}, let us notice that, recalling \eqref{g'_k}, we have
			$$
			sg'\left(\left(s^2-k\right)_+\right)=\frac{2ks\left(s^2-k\right)_+}{\left(k+\left(s^2-k\right)_+^2\right)^2}.
			$$
			So, since the function $sg'\left(\left(s^2-k\right)_+\right)$ is continuous in the interval $
			\Set{s\ge0| s^2>k}=\left(\sqrt{k}, +\infty\right)$
			and 
			$$\lim_{s\to+\infty}\frac{2ks\left(s^2-k\right)_+}{\left(k+\left(s^2-k\right)_+^2\right)^2}=0,$$
			then there exists a constant $c_k>0$ such that
			$$sg'\left(\left(s^2-k\right)_+\right)\le c_k\qquad\mbox{ for every }s\ge0,$$
			which is the conclusion.
		\end{proof}
		\color{black}
		For any $\delta>0$, let us define
		
		\begin{equation}\label{Gdef}
			\mathcal{G}_\delta(t):=\int_0^t\frac{s(s+\delta)^{\frac{p-2}{2}}}{\sqrt{1+\delta+s^2}}\, ds, \qquad\mbox{ for }t\ge0,
		\end{equation}
		
		and observe that
		
		\begin{equation}\label{G'}
			\mathcal{G}'_\delta(t)=\frac{t(t+\delta)^{\frac{p-2}{2}}}{\sqrt{1+\delta+t^2}}.
		\end{equation}
		
		Next Lemma relates the function $\mathcal{G}_\delta\left(\left|\xi\right|\right)$ with $H_{\frac{p}{2}}(\xi)$.
		
		\begin{lemma}\label{GHLemma}
			Let $\mathcal{G}_\delta$ be the function defined by \eqref{Gdef} and $H_{\frac{p}{2}}$ be the one defined in \eqref{Hdef} with $\lambda=\displaystyle\frac{p}{2}$. Then we have
			\begin{equation}\label{GHineq}
				\left|\mathcal{G}_\delta\left(\left(\left|\xi\right|-\delta -1\right)_+\right)-\mathcal{G}_\delta\left(\left(\left|\eta\right|-\delta -1\right)_+\right)\right|^2\le c_p\left|H_{\frac{p}{2}}\left(\xi\right)-H_{\frac{p}{2}}\left(\eta\right)\right|^2
			\end{equation}
			for any $\xi, \eta \in\R^n$. 
		\end{lemma}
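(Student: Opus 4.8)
The plan is to reduce the inequality \eqref{GHineq} to a pointwise estimate for the derivative of the scalar function $t \mapsto \mathcal{G}_\delta\bigl((t-\delta-1)_+\bigr)$ compared with the derivative of $t \mapsto (t-1)_+^{p/2}$, and then exploit the fact that $\xi \mapsto H_{p/2}(\xi)$ is a (locally Lipschitz, radially monotone) function of $\xi$ whose radial part is exactly $(|\xi|-1)_+^{p/2}$. Concretely, set $\Phi_\delta(t) := \mathcal{G}_\delta\bigl((t-\delta-1)_+\bigr)$ and $\Psi(t) := (t-1)_+^{p/2}$ for $t \ge 0$. Both vanish on $[0, 1]$ (indeed $\Phi_\delta$ vanishes on $[0, 1+\delta]$), both are nondecreasing, and both are locally Lipschitz. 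By the chain rule, for $t > 1+\delta$ we have $\Phi_\delta'(t) = \mathcal{G}_\delta'\bigl((t-\delta-1)_+\bigr) = \dfrac{(t-\delta-1)(t-1)^{\frac{p-2}{2}}}{\sqrt{1+\delta+(t-\delta-1)^2}}$ using \eqref{G'}, while $\Psi'(t) = \dfrac{p}{2}(t-1)^{\frac{p-2}{2}}$ for $t > 1$. The key scalar claim is then
\[
\Phi_\delta'(t) \le c_p\, \Psi'(t) \qquad \text{for all } t > 1+\delta,
\]
which follows because $\dfrac{(t-\delta-1)}{\sqrt{1+\delta+(t-\delta-1)^2}} \le 1$ and $(t-1)^{\frac{p-2}{2}}$ appears as a common factor; so one may take $c_p = 2/p$ (or simply $c_p=1$). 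Since also $\Phi_\delta \equiv 0$ on $[0,1+\delta] \supseteq [0,1]$ where $\Psi$ may already be positive, integrating the derivative bound from the common vanishing starting point gives $\Phi_\delta(t) \le c_p\, \Psi(t)$ for every $t \ge 0$, and moreover, because the bound holds at the level of derivatives, the \emph{increments} are controlled: for $0 \le a \le b$,
\[
\Phi_\delta(b) - \Phi_\delta(a) = \int_a^b \Phi_\delta'(s)\, ds \le c_p \int_a^b \Psi'(s)\, ds = c_p\bigl(\Psi(b) - \Psi(a)\bigr).
\]

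Next I would lift this one-dimensional increment estimate to $\mathbb{R}^n$. Given $\xi, \eta \in \mathbb{R}^n$, put $a = \min\{|\xi|, |\eta|\}$, $b = \max\{|\xi|,|\eta|\}$; then the left-hand side of \eqref{GHineq} equals $\bigl(\Phi_\delta(b) - \Phi_\delta(a)\bigr)^2 \le c_p^2 \bigl(\Psi(b) - \Psi(a)\bigr)^2 = c_p^2\bigl((|\xi|-1)_+^{p/2} - (|\eta|-1)_+^{p/2}\bigr)^2$. It therefore remains to bound $\bigl|\,(|\xi|-1)_+^{p/2} - (|\eta|-1)_+^{p/2}\,\bigr|$ by $\bigl|H_{p/2}(\xi) - H_{p/2}(\eta)\bigr|$ up to a constant. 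This is a standard fact about the map $H_\lambda$: writing $H_{p/2}(\xi) = (|\xi|-1)_+^{p/2}\,\xi/|\xi|$ (and $=0$ at $\xi=0$), one has the elementary inequality $\bigl|\,|v|\,e_v - |w|\,e_w\,\bigr| \le |v - w|$ is false in general, so instead one uses that for any two vectors the difference of their \emph{norms} is at most the norm of their difference: here the norms in question are $|H_{p/2}(\xi)| = (|\xi|-1)_+^{p/2}$ and $|H_{p/2}(\eta)| = (|\eta|-1)_+^{p/2}$, hence
\[
\bigl|\,(|\xi|-1)_+^{p/2} - (|\eta|-1)_+^{p/2}\,\bigr| = \bigl|\,|H_{p/2}(\xi)| - |H_{p/2}(\eta)|\,\bigr| \le \bigl|H_{p/2}(\xi) - H_{p/2}(\eta)\bigr|.
\]
Combining the two displays yields \eqref{GHineq} with $c_p$ replaced by $c_p^2$, i.e. the claim with a new $p$-dependent constant.

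The only genuinely delicate point is the scalar derivative comparison in the borderline region $t \downarrow 1+\delta$ and the bookkeeping of the two different vanishing thresholds ($1$ for $\Psi$, $1+\delta$ for $\Phi_\delta$); once one observes that $\Phi_\delta$ simply vanishes on the larger interval, the increment inequality $\Phi_\delta(b)-\Phi_\delta(a) \le c_p(\Psi(b)-\Psi(a))$ is immediate from the pointwise bound $\Phi_\delta' \le c_p \Psi'$ together with the monotonicity of $\Psi$. No growth condition on $p$ beyond $p \ge 2$ is needed, since $(t-1)^{(p-2)/2}$ is the common factor and the remaining quotient is universally bounded by $1$. Thus the proof is entirely elementary, reducing to a one-variable estimate plus the trivial inequality $\bigl|\,|v| - |w|\,\bigr| \le |v-w|$ applied to $v = H_{p/2}(\xi)$, $w = H_{p/2}(\eta)$.
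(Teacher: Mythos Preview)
Your proof is correct and follows essentially the same route as the paper: bound $\mathcal{G}_\delta'$ using $s/\sqrt{1+\delta+s^2}\le 1$ so that the increment of $\Phi_\delta$ is controlled by the increment of $t\mapsto (t-1)_+^{p/2}$, and then finish with the reverse triangle inequality $\bigl|\,|H_{p/2}(\xi)|-|H_{p/2}(\eta)|\,\bigr|\le |H_{p/2}(\xi)-H_{p/2}(\eta)|$. Your derivative-comparison formulation (observing $\Phi_\delta'\le \tfrac{2}{p}\Psi'$ on all of $[0,\infty)$, including the interval $(1,1+\delta]$ where $\Phi_\delta'\equiv 0$) is in fact slightly cleaner than the paper's argument, which computes the integral explicitly and then needs a small case distinction on whether $|\eta|\le 1+\delta$ or $|\eta|>1+\delta$.
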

		\begin{proof}{If $ |\xi|<1+\delta$ and $|\eta|<1+\delta$ there is nothing to prove. So will assume that $|\xi|>1+\delta$, and without loss of generality we may suppose that $ |\eta|\le |\xi|$.} 
			Since $\mathcal{G}_\delta(t)$ is increasing, we have
			\begin{eqnarray*}
				&& \left|\mathcal{G}_\delta\left(\left|\xi\right|-1-\delta\right)-\mathcal{G}_\delta\left(\left(\left|\eta\right|-1-\delta\right)_+\right)\right|\cr\cr&=&
				\mathcal{G}_\delta\left(\left|\xi\right|-1-\delta\right)-\mathcal{G}_\delta\left(\left(\left|\eta\right|-1-\delta\right)_+\right)\cr\cr
				&=&\int_{\left(\left|\eta\right|-1-\delta\right)_+}^{\left|\xi\right|-1-\delta}\frac{s(s+\delta)^{\frac{p-2}{2}}}{\sqrt{1+\delta+s^2}}\, ds\cr\cr
				&\le&\int_{\left(\left|\eta\right|-1-\delta\right)_+}^{\left|\xi\right|-1-\delta}(s+\delta)^{\frac{p-2}{2}}\,ds\cr\cr
				&=&\frac{2}{p}\left[\left(\left|\xi\right|-1\right)^\frac{p}{2}-\left[\left(\left|\eta\right|-\delta-1\right)_++\delta\right]^\frac{p}{2}\right]. 
			\end{eqnarray*}
				
				
				Now, it can be easily checked that
				\begin{eqnarray*}
					&&\left(\left|\xi\right|-1\right)^\frac{p}{2}-\left[\left(\left|\eta\right|-\delta-1\right)_++\delta\right]^\frac{p}{2}\cr\cr
					&=&\begin{cases}
						\left(\left|\xi\right|-1\right)^\frac{p}{2}-\delta^\frac{p}{2}\qquad&\mbox{ if }\quad \left|\xi\right|>\delta+1\mbox{ and }\left|\eta\right|\le\delta+1
						\vspace{11pt}\\
						\left(\left|\xi\right|-1\right)^\frac{p}{2}-\left(\left|\eta\right|-1\right)^\frac{p}{2}\qquad&\mbox{ if }\quad \left|\xi\right|>\delta+1\mbox{ and }\left|\eta\right|>\delta+1.
					\end{cases} 
				\end{eqnarray*}
				In the first case, we have
				\begin{eqnarray*}
					\left|\left(\left|\xi\right|-1\right)^\frac{p}{2}-\delta^\frac{p}{2}\right|&=&\left(\left|\xi\right|-1\right)^\frac{p}{2}-\delta^\frac{p}{2}\le \left(\left|\xi\right|-1\right)^\frac{p}{2}-\left(\left|\eta\right|-1\right)_+^\frac{p}{2}\cr\cr&=&\left|H_{\frac{p}{2}}\left(\xi\right)\right|-\left|H_{\frac{p}{2}}\left(\eta\right)\right|\le \left|H_{\frac{p}{2}}\left(\eta\right)-H_{\frac{p}{2}}\left(\xi\right)\right|,
				\end{eqnarray*}
				while, in the second,
				$$ \left(\left|\xi\right|-1\right)^\frac{p}{2}-\left[\left(\left|\eta\right|-\delta-1\right)_++\delta\right]^\frac{p}{2}=\left|H_{\frac{p}{2}}\left(\xi\right)\right|-\left|H_{\frac{p}{2}}\left(\eta\right)\right|\le \left|H_{\frac{p}{2}}\left(\eta\right)-H_{\frac{p}{2}}\left(\xi\right)\right|.$$
				Therefore,
				
				\begin{eqnarray*}
					\left|\mathcal{G}_\delta\left(\left(\left|\xi\right|-\delta -1\right)_+\right)-\mathcal{G}_\delta\left(\left(\left|\eta\right|-\delta -1\right)_+\right)\right|^2\le c_p\left|H_{\frac{p}{2}}\left(\xi\right)-H_{\frac{p}{2}}\left(\eta\right)\right|^2
				\end{eqnarray*}
				for every $\xi, \eta\in\R^n$, which is \eqref{GHineq}.
			\end{proof}
			Arguing as in \cite[Lemma 2.1]{EleMarMas3}, we prove the following.
			\begin{lemma}\label{estGdelta} 
				Let $0<\delta\le 1$ and $p\ge 2$. Then the following inequalities hold
				\begin{equation*}
					c_{p,\delta}(t+\delta)^{\frac{p}{2}}-\tilde{c}_{p,\delta}\le\mathcal{G}_\delta(t)\le \frac{2}{p} (t+\delta)^{\frac{p}{2}}
				\end{equation*}
				with constants $\tilde{c}_{p,\delta}$ and $c_{p,\delta}<\displaystyle\frac{2}{p}$ depending on $p$ and $\delta$.
			\end{lemma}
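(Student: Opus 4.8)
The plan is to read off both inequalities directly from the integral representation \eqref{Gdef}, controlling the weight $w(s):=\dfrac{s}{\sqrt{1+\delta+s^{2}}}$ from above by $1$ everywhere and from below by a fixed positive constant away from the origin.

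\textbf{Upper bound.} First I would note that $s^{2}\le 1+\delta+s^{2}$, so $w(s)\le 1$ for every $s\ge 0$. Inserting this into \eqref{Gdef} gives
\[
\mathcal{G}_\delta(t)\le\int_0^t (s+\delta)^{\frac{p-2}{2}}\,ds=\frac{2}{p}\Big[(t+\delta)^{\frac{p}{2}}-\delta^{\frac{p}{2}}\Big]\le\frac{2}{p}(t+\delta)^{\frac{p}{2}},
\]
which is exactly the right-hand inequality (here $p\ge2$ guarantees that the antiderivative is the stated one).

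\textbf{Lower bound.} Since $w(s)\to 0$ as $s\to 0^{+}$, one cannot integrate from $0$; instead I would split at the threshold $s_0:=\sqrt{1+\delta}$, at which $w(s_0)^{2}=\tfrac12$. If $t\le s_0$ there is nothing to prove: $\mathcal{G}_\delta$ is nonnegative and, by monotonicity, $(t+\delta)^{p/2}\le(s_0+\delta)^{p/2}$, so the claimed estimate holds as soon as $\tilde c_{p,\delta}\ge c_{p,\delta}(s_0+\delta)^{p/2}$. If instead $t>s_0$, then $w(s)\ge 1/\sqrt2$ on $[s_0,t]$, hence
\[
\mathcal{G}_\delta(t)\ge\frac{1}{\sqrt2}\int_{s_0}^t(s+\delta)^{\frac{p-2}{2}}\,ds=\frac{\sqrt2}{p}\Big[(t+\delta)^{\frac{p}{2}}-(s_0+\delta)^{\frac{p}{2}}\Big].
\]
Choosing $c_{p,\delta}:=\dfrac{\sqrt2}{p}$, which is strictly smaller than $\dfrac{2}{p}$ as required, together with $\tilde c_{p,\delta}:=\dfrac{\sqrt2}{p}\big(\sqrt{1+\delta}+\delta\big)^{\frac{p}{2}}$, settles both cases simultaneously.

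The computation is entirely elementary; the only point needing a little care — the main (minor) obstacle — is the degeneration of the weight $w$ at $s=0$, which is precisely what forces the additive constant $\tilde c_{p,\delta}$ and the case distinction $t\le s_0$ versus $t>s_0$. If one wanted $c_{p,\delta}$ arbitrarily close to $\dfrac{2}{p}$ (at the price of a larger $\tilde c_{p,\delta}$), it would suffice to split instead at $s_0=\sqrt{(1+\delta)(1+\varepsilon^{2})/\varepsilon^{2}}$ and use $w(s)\ge(1+\varepsilon^{2})^{-1/2}$ there; but any admissible value below $\dfrac{2}{p}$ is enough for the later arguments.
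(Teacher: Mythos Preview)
Your proof is correct, and it is considerably more economical than the paper's. The upper bound is identical to the paper's (the trivial estimate $s\le\sqrt{1+\delta+s^2}$). For the lower bound, however, the paper treats $p=2$ separately by explicit integration and then, for $p>2$, bounds $\sqrt{1+\delta+s^2}\le\sqrt{1+\delta}+s$, performs the substitution $r=\sqrt{1+\delta}+s$, splits $\dfrac{s}{r}=1-\dfrac{\sqrt{1+\delta}}{r}$, and must then optimise an auxiliary function $h(t)$ to locate an admissible constant $c_{p,\delta}<\tfrac{2}{p}$. Your argument bypasses all of this: exploiting the monotonicity of the weight $w(s)=s/\sqrt{1+\delta+s^2}$, you cut at $s_0=\sqrt{1+\delta}$ where $w(s_0)=1/\sqrt2$, obtain the constant $c_{p,\delta}=\sqrt2/p$ directly, and the additive constant $\tilde c_{p,\delta}$ falls out for free. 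This works uniformly for all $p\ge2$ with no case distinction in $p$. The paper's approach could in principle produce a constant closer to $2/p$, but---as you yourself note in your final remark---your threshold can be pushed to achieve the same if ever needed, and for the applications in Section~5 any positive $c_{p,\delta}$ suffices. (Minor slip: in your closing parenthetical the formula for $s_0$ should be $\sqrt{1+\delta}/\varepsilon$; this does not affect the proof.)
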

			\begin{proof} If $p=2$, one can easily calculate 
				$$\mathcal{G}_\delta(t)=\int_0^t\frac{s}{\sqrt{1+\delta+s^2}}\, ds=\left[\sqrt{1+\delta+s^2}\right]_{0}^{t}=\sqrt{1+\delta+t^2}-\sqrt{1+\delta},$$
				from which immediately follows
				$$ \frac12\left(t+\delta\right)-\frac12\left(\sqrt{1+\delta}+\delta\right)\le\mathcal{G}_\delta(t)\le t+\delta.$$
				Let $p>2$. The right  inequality is a simple consequence of the trivial bound $\frac{s}{\sqrt{1+\delta+s^2}}< 1$. For the left inequality we start observing that
				$$
				\sqrt{1+\delta+s^2}\le \sqrt{1+\delta}+s\quad\Longrightarrow\quad \mathcal{G}_\delta(t)\ge \int_0^t\frac{s\left(s+\delta\right)^{\frac{p-2}{2}}}{\sqrt{1+\delta}+s}\,ds.
				$$
				Now, we calculate the integral in previous formula. By the change of variable $r=\sqrt{1+\delta}+s$, we get
				\begin{eqnarray*}
					&&\int_0^t\frac{s\left(s+\delta\right)^{\frac{p-2}{2}}}{\sqrt{1+\delta}+s}\, ds=\int_{\sqrt{1+\delta}}^{t+\sqrt{1+\delta}}\frac{\left(r-\sqrt{1+\delta}\right)\left(r-\sqrt{1+\delta}+\delta\right)^{\frac{p-2}{2}}}{r}\,ds\cr\cr
					&=&\int_{\sqrt{1+\delta}}^{t+\sqrt{1+\delta}}\left(r-\sqrt{1+\delta}+\delta\right)^{\frac{p-2}{2}}\,ds-\sqrt{1+\delta}\int_{\sqrt{1+\delta}}^{t+\sqrt{1+\delta}}\frac{\left(r-\sqrt{1+\delta}+\delta\right)^{\frac{p-2}{2}}}{r}\, ds\cr\cr
					&\ge&\frac{2}{p}\left[\left(r-\sqrt{1+\delta}+\delta\right)^{\frac{p}{2}}\right]_{\sqrt{1+\delta}}^{t+\sqrt{1+\delta}}-\sqrt{1+\delta}\int_{\sqrt{1+\delta}}^{t+\sqrt{1+\delta}}\left(r-\sqrt{1+\delta}+\delta\right)^{\frac{p}{2}-2}\,ds,
				\end{eqnarray*}
				since $0<\delta\le 1$, we have $\delta\le \sqrt{1+\delta}$ and therefore $r-\sqrt{1+\delta}+\delta\le r$. Calculating the last integral in previous formula, we get
				\begin{eqnarray*}
					&&\int_0^t\frac{s(s+\delta)^{\frac{p-2}{2}}}{\sqrt{1+\delta}+s}\,ds\cr\cr 
					&\ge&\frac{2}{p}\left[\left(r-\sqrt{1+\delta}+\delta\right)^{\frac{p}{2}}\right]_{\sqrt{1+\delta}}^{t+\sqrt{1+\delta}}-\frac{2\sqrt{1+\delta}}{p-2}\left[\left(r-\sqrt{1+\delta}+\delta\right)^{\frac{p}{2}-1}\right]_{\sqrt{1+\delta}}^{t+\sqrt{1+\delta}}\cr\cr
					&=&\frac{2}{p}\left[(t+\delta)^{\frac{p}{2}}-\delta^{\frac{p}{2}}\right]-\frac{2\sqrt{1+\delta}}{p-2}\left[(t+\delta)^{\frac{p}{2}-1}-\delta^{\frac{p}{2}-1}\right]\cr\cr
					&=&\frac{2}{p}(t+\delta)^{\frac{p}{2}}-\frac{2\sqrt{1+\delta}}{p-2}(t+\delta)^{\frac{p}{2}-1}+\frac{2\sqrt{1+\delta}}{p-2}\delta^{\frac{p}{2}-1}-\frac{2}{p}\delta^{\frac{p}{2}}.
				\end{eqnarray*}
				Therefore the lemma will be proven if there exists a constant $c_{p,\delta}<\displaystyle\frac{2}{p}$ such that
				$$c_{p,\delta}(t+\delta)^{\frac{p}{2}}\le \frac{2}{p}(t+\delta)^{\frac{p}{2}}-\frac{2\sqrt{1+\delta}}{p-2}(t+\delta)^{\frac{p}{2}-1}+\frac{2\sqrt{1+\delta}}{p-2}\delta^{\frac{p}{2}-1}-\frac{2}{p}\delta^{\frac{p}{2}}$$
				which, setting 
				$$
				h(t)=\frac{2\sqrt{1+\delta}}{p-2}(t+\delta)^{\frac{p}{2}-1}+\left(c_{p,\delta}-\frac{2}{p}\right)(t+\delta)^{\frac{p}{2}},$$
				is equivalent to prove that there exists $c_{p,\delta}$ such that 
				$$h(t)\le \frac{2\sqrt{1+\delta}}{p-2}\delta^{\frac{p}{2}-1}-\frac{2}{p}\delta^{\frac{p}{2}}.
				$$
				It is easy to check that $h(t)$ attains his maximum for $t+\delta=\displaystyle\frac{2\sqrt{1+\delta}}{2-pc_{p,\delta}}$
				and so
				{$$h(t)\le h\left(\frac{2\sqrt{1+\delta}}{2-pc_{p,\delta}}-\delta\right)=\left(2\sqrt{1+\delta}\right)^{\frac{p}{2}}\left(\frac{1}{2-pc_{p,\delta}}\right)^{\frac{p-2}{2}}\frac{2}{p\left(p-2\right)}$$}
				Therefore, to complete the proof it's enough to solve the following equation
				{$$\left(2\sqrt{1+\delta}\right)^{\frac{p}{2}}\left(\frac{1}{2-pc_{p,\delta}}\right)^{\frac{p-2}{2}}\frac{2}{p\left(p-2\right)}= \frac{2\sqrt{1+\delta}}{p-2}\delta^{\frac{p}{2}-1}-\frac{2}{p}\delta^{\frac{p}{2}}$$}
				which is equivalent to
				{$$\frac{1}{2-pc_{p,\delta}}=\left(\frac{\delta}{2\sqrt{1+\delta}}\right)^\frac{p}{p-2}\left(\frac{p\left(\sqrt{1+\delta}-\delta\right)}{\delta}+2\right)^\frac{2}{p-2}$$}
				that, for $0<\delta<1$, admits a unique solution {$c_{p,\delta}<\displaystyle\frac{2}{p}$}.
			\end{proof}

			\section{The regularization}
			For $\varepsilon>0$, we introduce the  sequence of operators 
			$$A_\varepsilon(\xi):=\left(\left|\xi\right|-1\right)_+^{p-1}\frac{\xi}{\left|\xi\right|}+\varepsilon\left(1+|\xi|^2\right)^{\frac{p-2}{2}}\xi $$
			and by 
			$$u_\varepsilon\in C^0\left(t_0-R^2,t_0;L^2\left(B_R\right)\right)\cap L^p\left(t_0-R^2,t_0; u+W_0^{1,p}\left(B_{R}\right)\right)$$  
			we denote the unique solution to the corresponding problems
			\begin{equation}\label{app}
				\begin{cases}	u^\varepsilon_t-\div\left(A_\varepsilon\left(Du^\varepsilon\right)\right)=f^\varepsilon\qquad&\mbox{ in }Q_{{R}}\left(z_0\right)\vspace{11pt}\\
					u^\varepsilon=u\qquad&\mbox{ in }\partial_{\mathrm{par}} Q_{{R}}\left(z_0\right)
				\end{cases}
			\end{equation}
			where $Q_{{R}}\left(z_0\right)\Subset\Omega_T$ with $R<1$, $f^\varepsilon=f*\rho_\varepsilon$ with $\rho_\varepsilon$ the usual sequence of mollifiers. 
			One can easily   check that the operator $A_\varepsilon$  satisfies $p$-growth and $p$-ellipticity assumptions with constants depending on $\varepsilon$.\\ 
			Therefore, by the results in \cite{DMS}, we have $$V_p\left(Du^\varepsilon\right)\in L^2_{\loc}\left(0,T; W^{1,2}_\loc\left(B_{{R}}\left(x_0\right), \R^n\right)\right)\qquad\mbox{ and }\qquad \left|Du^\varepsilon\right|\in L^{p+\frac{4}{n}}_{\mathrm{loc}}\left(Q_{{R}}\right)$$
			and, by the definition of $V_p(\xi)$, 
			this yields
			\begin{equation}\label{derV}
				DV_p\left(Du^\eps\right)\approx\left(1+\left|Du^\varepsilon\right|^2\right)^{\frac{p-2}{4}}D^2u^\varepsilon\in L^{2}_{\mathrm{loc}}\left(Q_{{R}};\R^{n\times n}\right)\quad\implies\quad\left|D^2u^\varepsilon\right|\in L^{2}_{\mathrm{loc}}\left(Q_{{R}}\right)
			\end{equation}
			
			By virtue of \cite[Theorem 1.1]{AmbrosioPdN}, we also have 
			$
			H_{\frac{p}{2}}\left(Du^\eps\right)\in L^2_{\loc}\left(0,T; W^{1,2}_\loc\left(\Omega,\R^n\right)\right)
			$
			and, by the definition of $H_{\frac{p}{2}}(\xi)$, it follows
			\begin{equation}\label{derH}
				\left|DH_{\frac{p}{2}}\left(Du\right)\right|\le c_p\left(\left|Du^\varepsilon\right|-1\right)_+^\frac{p-2}{2}|D^2u^\varepsilon|\in L^{2}_{\mathrm{loc}}\left(Q_{{R}};\R^{n\times n}\right).
			\end{equation}
			\subsection{Uniform a priori estimates}
			The first step in the proof of Theorem \ref{Theorem1} is the following estimate for solutions to the regularized problem \eqref{app}.
			
			\begin{lemma}\label{uniformestlemma}
				Let $u^\varepsilon\in C^0\left(t_0-{R^2},t_0;L^2\left(B_{{R}}\right)\right)\cap L^p\left(t_0-{R^2},t_0; u+W_0^{1,p}\left(B_{{R}}\right)\right)$ be the unique solution to \eqref{app}. Then the following estimate
				\begin{eqnarray}\label{uniformest}
					&&\sup_{\tau\in\left(t_0-{{4\rho^2}}, t_0\right)}\int_{B_{\rho}}\left(\left|Du^\varepsilon(x,\tau)\right|^2-1-\delta\right)_+\,dx\cr\cr
					&&+\int_{Q_\rho}\left|D\left[\mathcal{G}_\delta\left(\left(\left|Du^\varepsilon\right|-\delta -1\right)_+\right)\right]\right|^2\,dz\cr\cr
					&\le&\frac{c}{\rho^2}\left[\int_{Q_{2\rho}}\left(1+\left|D u^\varepsilon\right|^p\right)\, dz +\delta^{2-p}\int_{Q_{2\rho}}\left|f^\varepsilon\right|^2 \,dz\right]
				\end{eqnarray}
				holds for any $\varepsilon\in(0, 1]$ and for every ${Q_\rho\Subset Q_{2\rho}\Subset Q_R}$, with a constant $c=c(n,p)$ independent of $\varepsilon$.
			\end{lemma}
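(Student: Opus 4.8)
The proof is a Caccioppoli-type argument for the regularized equation \eqref{app}, carried out by testing a difference-quotiented version of the equation with a suitable test function and exploiting the monotonicity estimates of Lemma \ref{lem:Brasco} and Lemma \ref{diff-A}. Since $u^\varepsilon$ is regular enough (by \eqref{derV}–\eqref{derH}) we may work directly with spatial derivatives $D_s u^\varepsilon$ rather than with finite differences, or equivalently differentiate the equation in the direction $x_s$. I would fix a cutoff $\eta\in C_0^\infty(B_{2\rho})$ with $\eta\equiv1$ on $B_\rho$, $|D\eta|\le c/\rho$, and a Lipschitz cutoff $\zeta(t)$ in time with $\zeta\equiv1$ on $(t_0-4\rho^2,t_0)$ vanishing near $t_0-(2\rho)^2$, and test the differentiated equation with
$$
\varphi = \eta^2\zeta^2\, D_s u^\varepsilon\, \Phi\big(\big(|Du^\varepsilon|^2-1-\delta\big)_+\big),
$$
where $\Phi$ is chosen so that the resulting left-hand side reproduces both $\sup_\tau \int_{B_\rho}(|Du^\varepsilon|^2-1-\delta)_+\,dx$ (from the parabolic term) and $\int_{Q_\rho} |D[\mathcal{G}_\delta((|Du^\varepsilon|-\delta-1)_+)]|^2\,dz$ (from the elliptic term); concretely $\Phi$ should be comparable to $g_k'$ with $k=1+\delta$, so that the function $g_k$ of Section \ref{Gproperties} is exactly what is engineered to control the bad terms via the Young-type inequality \eqref{propgfinale}.

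\textbf{Key steps in order.} (1) Differentiate \eqref{app} in direction $x_s$, test with $\varphi$ above, and sum over $s=1,\dots,n$. (2) The parabolic term: after an integration by parts in $t$ and using $\partial_t u^\varepsilon\cdot D_s u^\varepsilon = \tfrac12 D_s(|Du^\varepsilon|^2)$-type manipulations, it produces $\sup_\tau\int_{B_\rho}(\cdots)_+\,dx$ plus an error term involving $\zeta'$ which is absorbed into $\rho^{-2}\int_{Q_{2\rho}}(1+|Du^\varepsilon|^p)$. (3) The principal-part term: on the set $\{|Du^\varepsilon|>1+\delta\}$ one uses the ellipticity lower bound of Lemma \ref{lem:Brasco} (first inequality) together with the chain rule $D[\mathcal{G}_\delta((|Du^\varepsilon|-\delta-1)_+)] \approx \mathcal{G}_\delta'(\cdot)\,(\text{second derivatives})$ and the definition \eqref{G'} of $\mathcal{G}_\delta'$ to recognise the gradient term on the left; the $\varepsilon$-perturbation contributes a nonnegative term that is simply discarded. (4) The terms where $D\eta$ or $D\zeta$ falls on the coefficient: these are estimated by the upper bound of Lemma \ref{lem:Brasco} (second inequality) or Lemma \ref{diff-A} and then handled by the Young inequality \eqref{propgfinale}, with the parameter $\alpha$ chosen small to reabsorb the $g_k$-term into the left-hand side and the parameter $\sigma$ eventually sent to $0$ (the constant $c_\alpha$ being independent of $\sigma$, as guaranteed by the lemma). (5) The datum term $\int f^\varepsilon\, D_s\varphi$: integrate by parts to move the derivative, use $|D\varphi|\le c\eta\zeta(\cdots)|D^2u^\varepsilon|+\text{l.o.t.}$ and Young's inequality to get a piece absorbed on the left and a piece bounded by $\delta^{2-p}\int_{Q_{2\rho}}|f^\varepsilon|^2$; the factor $\delta^{2-p}$ is precisely what emerges from balancing $\mathcal{G}_\delta'$ (which carries a factor $(s+\delta)^{(p-2)/2}$, hence degenerates like $\delta^{(p-2)/2}$ as $s\to0^+$) against the $L^2$ norm of $f$. (6) Collect everything, reabsorb, and use $\|f^\varepsilon\|_{L^2(Q_{2\rho})}\le\|f\|_{L^2(Q_{2\rho})}$ to obtain \eqref{uniformest} with a constant independent of $\varepsilon$.

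\textbf{Main obstacle.} The delicate point is the bookkeeping of the boundary terms (step 4) where a derivative of the cutoff meets the coefficient $H_{p-1}(Du^\varepsilon)$: one must control a product of the form $(\text{coefficient difference})\cdot D\eta\cdot(\text{test factor})$ in which the test factor contains $g_k'((|Du^\varepsilon|^2-1-\delta)_+)$, and this is exactly the configuration for which inequality \eqref{propgfinale} was designed — the subtlety is that the $\delta$-dependence must come out as the explicit powers $\delta^{-2}$ and $\delta^{-p}$ displayed in \eqref{uniformest}, which forces a careful choice of $\Phi$ relative to $g_k'$ and a careful tracking of where the factor $k=1+\delta$ (bounded) versus genuine negative powers of $\delta$ (from $\mathcal{G}_\delta'$ near $0$ and from the Young split on the $f$-term) enter. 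A secondary technical nuisance is justifying the differentiation of the equation and the integrations by parts in $t$; this is legitimate thanks to \eqref{derV}–\eqref{derH} and the standard parabolic theory of \cite{DiB}, possibly after a further Steklov-averaging in time which I would mention but not detail. The passage from $\{|Du^\varepsilon|^2>1+\delta\}$ back to an estimate valid a.e. is automatic since $\mathcal{G}_\delta((|Du^\varepsilon|-\delta-1)_+)$ and all the relevant integrands vanish identically on the complementary set.
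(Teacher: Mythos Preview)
Your overall plan matches the paper's approach (the paper works via spatial difference quotients and then lets $h\to0$, but direct differentiation is legitimate given \eqref{derV}--\eqref{derH}; and the paper uses Lemma~\ref{diff-A} rather than Lemma~\ref{lem:Brasco} for the ellipticity lower bound, but either works). However, two specific points in your sketch are inaccurate and would derail the execution.

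First, the choice $\Phi\sim g_k'$ is wrong: you need $\Phi=g_{1+\delta}$ itself, so that the test function is $\eta^2\zeta\,D_su^\varepsilon\,g(|Du^\varepsilon|^2)$ with $g(s)=g_{1+\delta}((s-1-\delta)_+)$. The parabolic term then produces $\tfrac12\,\partial_t\!\int_0^{|Du^\varepsilon|^2}g(s)\,ds$, and since $\int_0^\zeta g(s)\,ds=(\zeta-1-\delta)_+-\sqrt{1+\delta}\,\arctan(\cdots)$ grows linearly, one recovers the term $\sup_\tau\int_{B_\rho}(|Du^\varepsilon|^2-1-\delta)_+\,dx$. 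With $\Phi=g_k'$ the antiderivative would be $g_k$, which is bounded by $1$, and the sup term on the left of \eqref{uniformest} would be lost.

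Second, you have misplaced the genuine difficulty. The $D\eta$-terms (your step~4) carry only the factor $g$, not $g'$, and are dispatched by an ordinary Young inequality. The term that actually requires \eqref{propgfinale} sits in your step~5: when the derivative in $D_s\varphi$ lands on $g(|Du^\varepsilon|^2)$ you obtain a contribution of the form $|f^\varepsilon|\,|Du^\varepsilon|^2\,g'(|Du^\varepsilon|^2)\,|D^2u^\varepsilon|$, and it is precisely here that \eqref{propgfinale} is applied (with $A=\tfrac{(|Du^\varepsilon|-1)_+^{p/2}}{|Du^\varepsilon|}|D^2u^\varepsilon|$, $B\sim\delta^{-p/2}|f^\varepsilon|$, $s=|Du^\varepsilon|^2$, $k=1+\delta$), followed by $\sigma\to0$, to reabsorb into the left-hand side and produce the negative power of $\delta$ on $\|f^\varepsilon\|_{L^2}^2$. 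Treating this piece as ``l.o.t.'' with an ordinary Young split will not close the estimate, because $|Du^\varepsilon|^2 g'(|Du^\varepsilon|^2)$ is not controlled by $g(|Du^\varepsilon|^2)$ without the specific structure of $g_{1+\delta}$.
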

			\begin{proof}
				The  weak formulation of  \eqref{app} reads as
				\begin{equation*}
					\int_{Q_{{R}}}\left(u^\varepsilon\cdot\partial_t\varphi-\langle A_{\varepsilon}\left(Du^\varepsilon\right), D\varphi\rangle\right)\,dz=-\int_{Q_{{R}}}f^\varepsilon\cdot\varphi\,dz
				\end{equation*}
				for any test function $\varphi\in C^{\infty}_0\left(Q_{{R}}\right)$.
				Recalling the notation used in \eqref{eq1H}, and replacing $\varphi$ with $\Delta_{-h}\varphi=\displaystyle\frac{\tau_{-h}\varphi}{h}$ for a sufficiently small $h\in\R\setminus\set{0}$, by virtue of the properties of difference quotients, we have
				
				\begin{eqnarray}\label{weakeq1*}
					&&	\int_{Q_{{R}}}\left(\Delta_{h}u^\varepsilon\cdot\partial_t\varphi-\left<\Delta_{h}H_{p-1}\left(Du^\varepsilon\right), D\varphi\right>-\varepsilon\left<\Delta_{h}\left(\left(1+\left|Du^\varepsilon\right|^2\right)^{\frac{p-2}{2}}Du^\varepsilon\right), D\varphi\right>\right)\,dz\cr\cr
					&=&-\int_{Q_{{R}}}f^\varepsilon\cdot\Delta_{-h}\varphi\,dz.
				\end{eqnarray}
				Arguing as  in \cite[Lemma 5.1]{DMS}, from \eqref{weakeq1*} we get
				\begin{eqnarray*}\label{weakeq1bis*}
					&&	\int_{Q_{{R}}}\partial_t\Delta_{h}u^\varepsilon\cdot\varphi\,dz+\int_{Q_{{R}}}\left<\Delta_{h}H_{p-1}\left(Du^\varepsilon\right), D\varphi\right>\,dz\cr\cr
					&&+\varepsilon\int_{Q_{{R}}}\left<\Delta_{h}\left(\left(1+\left|Du^\varepsilon\right|^2\right)^{\frac{p-2}{2}}Du^\varepsilon\right), D\varphi\right>\,dz=\int_{Q_{{R}}}f^\varepsilon\cdot\Delta_{-h }\varphi\,dz.
				\end{eqnarray*}
				For $\Phi\in W^{1,\infty}_0\left(Q_{{R}}\right)$ non negative and $g\in W^{1,\infty}\left(\mathbb{R}\right)$ non negative and non decreasing, we choose $\varphi=\Phi\cdot\Delta_h u^\eps\cdot g\left(\left|\Delta_h u^\varepsilon\right|^2\right)$ in previous identity, thus getting
				\begin{eqnarray*}\label{weakeq2*}
					&&\int_{Q_{{R}}}\partial_t\left(\Delta_{h}u^\varepsilon\right)\Delta_h u^\varepsilon \cdot g\left(\left|\Delta_h u^\varepsilon\right|^2\right)\Phi\,dz\cr\cr &&+\int_{Q_{{R}}}\left<\Delta_hH_{p-1}\left(Du^\varepsilon\right), D\left[\Phi\Delta_h u^\varepsilon g\left(\left|\Delta_h u^\varepsilon\right|^2\right)\right]\right>\,dz\cr\cr
					&&+\varepsilon\int_{Q_{{R}}}\left<\Delta_{h}\left(\left(1+\left|Du^\varepsilon\right|^2\right)^{\frac{p-2}{2}}Du^\varepsilon\right), D\left[\Phi\Delta_h u g\left(\left|\Delta_h u^\varepsilon\right|^2\right)\right]\right>\,dz\cr\cr
					&=&\int_{Q_{{R}}}f^\varepsilon\cdot\Delta_{-h}\left(\Phi\Delta_h u^\varepsilon\cdot g\left(\left|\Delta_h u^\varepsilon\right|^2\right)\right)\,dz,
				\end{eqnarray*}
				i.e.
				\begin{eqnarray}\label{weakeq2bis*}
					&&\int_{Q_{{R}}}\partial_t\left(\Delta_{h}u^\varepsilon\right)\Delta_h u^\varepsilon \cdot g\left(\left|\Delta_h u^\varepsilon\right|^2\right)\Phi\,dz\cr\cr
					&&+\int_{Q_{{R}}} \Phi\left<\Delta_hH_{p-1}\left(Du^\varepsilon\right),\Delta_h Du^\varepsilon\cdot g\left(\left|\Delta_h u^\varepsilon\right|^2\right)\right>\,dz\cr\cr
					&&+\varepsilon\int_{Q_{{R}}} \Phi\left<\Delta_{h}\left(\left(1+\left|Du^\varepsilon\right|^2\right)^{\frac{p-2}{2}}Du^\varepsilon\right),\Delta_h Du^\varepsilon\cdot g\left(\left|\Delta_h u^\varepsilon\right|^2\right)\right>\,dz\cr\cr
					&&+2\int_{Q_{{R}}}\Phi\left<\Delta_hH_{p-1}\left(Du^\varepsilon\right), \left|\Delta_h u^\varepsilon\right|^2\Delta_h Du^\varepsilon \cdot g'\left(\left|\Delta_h u^\varepsilon\right|^2\right)\right>\,dz\cr\cr
					&&+2\varepsilon\int_{Q_{{R}}}\Phi\left<\Delta_{h}\left(\left(1+|Du^\varepsilon|^2\right)^{\frac{p-2}{2}}Du^\varepsilon\right), \left|\Delta_h u^\varepsilon\right|^2\Delta_h Du^\varepsilon \cdot g'\left(\left|\Delta_h u^\varepsilon\right|^2\right)\right>\,dz\cr\cr
					&=&-\int_{Q_{{R}}}\left<\Delta_hH_{p-1}\left(Du^\varepsilon\right), D\Phi\cdot\Delta_h u^\varepsilon\cdot g\left(\left|\Delta_h u^\varepsilon\right|^2\right)\right>\,dz\cr\cr
					&&-\varepsilon\int_{Q_{{R}}}\left<\Delta_{h}\left(\left(1+\left|Du^\varepsilon\right|^2\right)^{\frac{p-2}{2}}Du^\varepsilon\right), D\Phi\cdot\Delta_h u^\varepsilon\cdot g\left(\left|\Delta_h u^\varepsilon\right|^2\right)\right>\,dz\cr\cr
					&&+\int_{Q_{{R}}}f^\varepsilon\cdot\Delta_{-h}\left(\Phi\Delta_h u^\varepsilon\cdot g\left(\left|\Delta_h u^\varepsilon\right|^2\right)\right)\,dz,
				\end{eqnarray}
				that we rewrite  as follows 
				$$J_{h, 1}+J_{h, 2}+J_{h, 3}+J_{h, 4}+J_{h, 5}=-J_{h, 6}-J_{h, 7}+J_{h, 8}.$$
				Arguing as in \cite{BDM},the first integral in equation \eqref{weakeq2bis*}  can be expressed as follows
				\begin{eqnarray*}
					J_{h, 1}&=&\int_{Q_{{R}}}\partial_t\left(\Delta_{h}u^\varepsilon\right)\Delta_h u^\varepsilon \cdot g\left(\left|\Delta_h u^\varepsilon\right|^2\right)\Phi\,dz=\frac{1}{2}\int_{Q_{{R}}}\partial_t\left(\left|\Delta_{h}u^\varepsilon\right|^2\right)\cdot g\left(\left|\Delta_h u^\varepsilon\right|^2\right)\Phi\,dz\cr\cr
					&=&\frac{1}{2}\int_{Q_{{R}}}\partial_t\left(\int_0^{\left|\Delta_{h}u^\varepsilon\right|^2}g(s)\,ds\right)\Phi\,dz=-\frac{1}{2}\int_{Q_{{R}}}\left(\int_0^{\left|\Delta_{h}u^\varepsilon\right|^2}g(s)\,ds\right)\partial_t\Phi\,dz.
				\end{eqnarray*}
				
				Using Lemma \ref{diff-A}, since $\Phi,g$ are non negative, we have
				$$J_{h, 2}\ge \int_{Q_{{R}}}\Phi\cdot g\left(\left|\Delta_h u^\varepsilon\right|^2\right){\left|\Delta_h Du^\varepsilon\right|^2\frac{\left(\left|Du^\varepsilon\right|-1\right)^p}{\left|Du^\varepsilon\right|\left(\left|Du^\varepsilon\right|+\left|Du^\varepsilon(x+h)\right|\right)}}\,dz.$$
				The right inequality in the assertion of Lemma \ref{lem:Lind} yields
				$$J_{h, 3}\ge \varepsilon c_p\int_{Q_{{R}}}\Phi\cdot g\left(\left|\Delta_h u^\varepsilon\right|^2\right)\left|\Delta_h V_{p}\left(Du^\varepsilon\right)\right|^2\,dz$$
				Moreover, again by Lemmas \ref{diff-A} and \ref{lem:Lind} and the fact that $g'(s)\ge 0$, we infer 
				$$J_{h, 4}+J_{h, 5}\ge0.$$
				Therefore \eqref{weakeq2bis*} implies
				\begin{eqnarray}\label{eq2*}
					&&-\frac{1}{2}\int_{Q_{{R}}}\left(\int_0^{\left|\Delta_{h}u^\varepsilon\right|^2}g(s)\,ds\right)\partial_t\Phi\,dz\cr\cr
					&&+\int_{Q_{{R}}}\Phi\cdot g\left(\left|\Delta_h u^\varepsilon\right|^2\right){\left|\Delta_h Du^\varepsilon\right|^2\frac{\left(\left|Du^\varepsilon\right|-1\right)^p}{\left|Du^\varepsilon\right|\left(\left|Du^\varepsilon\right| +\left|Du^\varepsilon(x+h)\right|\right)}}\,dz\cr\cr
					&&+c_p\varepsilon\int_{Q_{{R}}}\Phi\cdot g\left(\left|\Delta_h u^\varepsilon\right|^2\right)\left|\Delta_hV_{p}\left(Du^\varepsilon\right)\right|^2\,dz\cr\cr
					&\le&\int_{Q_{{R}}}\left|D\Phi\right|\left|\Delta_{h}H_{p-1}\left(Du^\varepsilon\right)\right|\left|\Delta_h u^\varepsilon\right| \cdot g\left(\left|\Delta_h u^\varepsilon\right|^2\right)\,dz\cr\cr
					&&+\varepsilon\int_{Q_{{R}}}\left|D\Phi\right|\left|\Delta_{h}\left(\left(1+\left|Du^\varepsilon\right|^2\right)^{\frac{p-2}{2}}Du^\varepsilon\right)\right|\left|\Delta_h u^\varepsilon\right| \cdot g\left(\left|\Delta_h u^\varepsilon\right|^2\right)\,dz\cr\cr
					&&+\int_{Q_{{R}}}\left|f^\varepsilon\right|\left|\Delta_{-h}\left(\Phi\Delta_h u^\varepsilon\cdot g\left(\left|\Delta_h u^\varepsilon\right|^2\right)\right)\right|\,dz.
				\end{eqnarray}
				Now let us consider a parabolic cylinder $Q_{{\rho}}\left(z_0\right)\Subset Q_{{2\rho}}\left(z_0\right)\Subset Q_{{R}}\left(z_0\right)$ with $\rho<{2\rho}<{R}$ and $t_0>0$. For a fixed time $\tau\in\left(t_0-{4\rho^2}, t_0\right)$ and $\theta\in\left(0, t_0-\tau\right)$, we choose $\Phi(x,t)=\eta^2(x)\chi(t)\tilde\chi(t)$ with $\eta\in C_0^\infty\left(B_{{2\rho}}\left(x_0\right)\right)$, $0\le\eta\le 1$, $\chi\in W^{1,\infty}\left(\left[0,T\right]\right)$ with $\partial_t\chi\ge0$ and $\tilde\chi$ a Lipschitz continuous function defined, for $0<\tau<\tau+\theta<T$, as follows
				\begin{equation*}
					\tilde\chi(t)=\begin{cases}
						1\qquad&\mbox{ if }\quad t\le \tau\vspace{11pt}\\
						1-\displaystyle\frac{t-\tau}{\theta}\qquad&\mbox{ if }\quad \tau<t\le \tau+\theta\vspace{11pt}\\
						0\qquad&\mbox{ if }\quad\tau+\theta< t\le T
					\end{cases}
				\end{equation*}
				so that \eqref{eq2*} yields 
				{\begin{eqnarray}\label{eq22*}
						I_{h, 1}+I_{h, 2}+I_{h, 3}&:=&\frac{1}{2}\int_{B_{{2\rho}}}\eta^2\chi(\tau)\left(\int_0^{\left|\Delta_{h}u^\varepsilon(x,\tau)\right|^2}g(s)\,ds\right)\,dx\cr\cr
						&&+{c_p}\int_{Q^{\tau}}\eta^2\chi(t)\cdot g\left(\left|\Delta_h u^\varepsilon\right|^2\right){\left|\Delta_h Du^\varepsilon\right|^2\frac{\left(\left|Du^\varepsilon\right|-1\right)^p}{\left|Du^\varepsilon\right|\left(\left|Du^\varepsilon\right| +\left|Du^\varepsilon(x+h)\right|\right)}}\,dz\cr\cr
						&&+c_p\varepsilon\int_{{Q^{\tau}}}\eta^2\chi(t)g\left(\left|\Delta_h u^\varepsilon\right|^2\right)\left|\Delta_hV_{p}\left(Du^\varepsilon\right)\right|^2\,dz\cr\cr
						&\le&2\int_{Q^{\tau}}\eta\chi(t)\left|D\eta\right|\left|\Delta_{h}H_{p-1}\left(Du^\varepsilon\right)\right|\left|\Delta_h u^\varepsilon\right| \cdot g\left(\left|\Delta_h u^\varepsilon\right|^2\right)\,dz\cr\cr
						&&+2\varepsilon	\int_{Q^{\tau}}\eta\chi(t)\left|D\eta\right|\left|\Delta_{h}\left(\left(1+\left|Du^\varepsilon\right|^2\right)^{\frac{p-2}{2}}Du^\varepsilon\right)\right|\left|\Delta_h u^\varepsilon\right| \cdot g\left(\left|\Delta_h u^\varepsilon\right|^2\right)\,dz\cr\cr
						&&+\int_{Q^{\tau}}\chi(t)\left|f^\varepsilon\right|\left|\Delta_{-h}\left(\eta^2\Delta_h u^\varepsilon\cdot g\left(\left|\Delta_h u^\varepsilon\right|^2\right)\right)\right|\,dz\cr\cr
						&&+\frac{1}{2}\int_{Q^{\tau}}\eta^2\partial_t\chi(t)\left(\int_0^{\left|\Delta_{h}u^\varepsilon\right|^2}g(s)\,ds\right)\,dz\cr\cr
						&=:&I_{h, 4}+I_{h, 5}+I_{h, 6}+I_{h, 7},
					\end{eqnarray}
					where we used the notation $Q^\tau=B_{{2\rho}}\left(x_0\right)\times \left(t_0-{4\rho^2},\tau\right).$\\
					Since $g\in W^{1,\infty}\left(\left[0, \infty\right)\right)$, by  \eqref{derV}, by the last assertion of Lemma \ref{lem:RappIncre} {and by Fatou's Lemma}, we have
					\begin{eqnarray}\label{I_{h, 1}I_{h, 2}I_{h, 3}}
						&&{\liminf_{h\to0}}\left(I_{h, 1}+I_{h, 2}+I_{h, 3}\right)\cr
						&\le &\frac{1}{2}\int_{B_{{2\rho}}}\eta^2\chi(\tau)\left(\int_0^{\left|Du^\varepsilon(x,\tau)\right|^2}g(s)\,ds\right)\,dx\cr\cr
						&&+{c_p}\int_{Q^{\tau}}\eta^2\chi(t)\cdot g\left(\left|D u^\varepsilon\right|^2\right){\left|D^2u^\varepsilon\right|^2\frac{\left(\left|Du^\varepsilon\right|-1\right)^p}{\left|Du^\varepsilon\right|^2}}\,dz\cr
						&&+c_p\varepsilon\int_{{Q^{\tau}}}\eta^2\chi(t)g\left(\left|D u^\varepsilon\right|^2\right)\left|DV_{p}\left(Du^\varepsilon\right)\right|^2\,dz.
					\end{eqnarray}
					and 
					\begin{equation}\label{I_{h, 7}}
						\lim_{h\to0}I_{h, 7}=\frac{1}{2}\int_{Q^{\tau}}\eta^2\partial_t\chi(t)\left(\int_0^{\left|Du^\varepsilon\right|^2}g(s)\,ds\right)\,dz.
					\end{equation}
					Now let us observe that
					\begin{equation}\label{DH_p-1}
						\left|DH_{p-1}\left(Du^\varepsilon\right)\right|\le c_p\left(\left|Du^\varepsilon\right|-1\right)^{p-2}_+\left|D^2u^\varepsilon\right|   
					\end{equation}
					and, 
					using H\"{o}lder's inequality with exponents $\left(\frac{2\left(p-1\right)}{p-2}, \frac{2\left(p-1\right)}{p}\right)$,  we have
					\begin{eqnarray*}
						\int_{B_R}\left|DH_{p-1}\left(Du^\varepsilon\right)\right|^\frac{p}{p-1}\,dx &\le& c_p\int_{B_R}\left[\left(\left|Du^\varepsilon\right|-1\right)^{p-2}_+\left|D^2u^\varepsilon\right|\right]^\frac{p}{p-1}dx\cr\cr
						&\le&c_p\left(\int_{B_R}\left(\left|Du^\varepsilon\right|-1\right)^{p}_+dx\right)^\frac{p-2}{2\left(p-1\right)}\cr\cr&&\quad\cdot\left(\int_{B_R}\left[\left(\left|Du^\varepsilon\right|-1\right)^{\frac{p-2}{2}}_+\left|D^2u^\varepsilon\right|\right]^2dx\right)^\frac{p}{2\left(p-1\right)}, 
					\end{eqnarray*}
					and since, by \eqref{derH}, the right hand side of previous inequality is finite again by Lemma \ref{lem:RappIncre}, we have
					$$
					\Delta_hH_{p-1}\left(Du^\varepsilon\right)\to DH_{p-1}\left(Du^\varepsilon\right)\qquad\mbox{ strongly in }\qquad L^2\left(0, T; L^\frac{p}{p-1}\left(B_R\right)\right)\qquad\mbox{ as }h\to0,
					$$
					which, since
					$\Delta_{h}u^\varepsilon\to Du^\varepsilon $ strongly in $L^2\left(0, T; L^p\left(B_R\right)\right)$ as $h\to 0,
					$
					implies
					\begin{equation}\label{I_{h, 4}}
						\lim_{h\to0}I_{h, 4}=2\int_{Q^{\tau}}\eta\chi(t)\left|D\eta\right|\left|DH_{p-1}\left(Du^\varepsilon\right)\right|\left|Du^\varepsilon\right|g\left(\left|Du^\varepsilon\right|^2\right)\,dz.
					\end{equation}
					Using similar arguments,
					we can check that
					\begin{equation}\label{I_{h, 5}}
						\lim_{h\to0}I_{h, 5}=2\varepsilon	\int_{Q^{\tau}}\eta\chi(t)\left|D\eta\right|\left|D\left(\left(1+\left|Du^\varepsilon\right|^2\right)^{\frac{p-2}{2}}Du^\varepsilon\right)\right|\left|D u^\varepsilon\right| \cdot g\left(\left|D u^\varepsilon\right|^2\right)\,dz.
					\end{equation}
					Now, by Proposition \ref{propdiff}(c), it holds
					\begin{eqnarray*}
						\left|\Delta_{-h}\left(\eta^2\Delta_h u^\varepsilon\cdot g\left(\left|\Delta_h u^\varepsilon\right|^2\right)\right)\right|
						&\le&c\Vert D\eta\Vert_\infty\left|\Delta_h u^\varepsilon\right|\left|g\left(\left|\Delta_h u^\varepsilon\right|^2\right)\right|\cr\cr
						&&+c\left|\Delta_{-h}\left(\Delta_h u^\varepsilon\right)\right|\left|g\left(\left|\Delta_h u^\varepsilon\right|^2\right)\right|\cr\cr
						&&+c\left|\Delta_h u^\varepsilon\right|^2\left|g'\left(\left|\Delta_h u^\varepsilon\right|^2\right)\right|\left|\Delta_hDu^\varepsilon\right|.
					\end{eqnarray*}
					and choosing $g$ such that  
					\begin{equation}\label{**}
						sg'\left(s^2\right)\le M, 
					\end{equation}
					for a positive constant $M$,
					we have
					\begin{eqnarray}\label{critical_g}
						\left|\Delta_{-h}\left(\eta^2\Delta_h u^\varepsilon\cdot g\left(\left|\Delta_h u^\varepsilon\right|^2\right)\right)\right|&\le&
						c\Vert D\eta\Vert_\infty\left|\Delta_h u^\varepsilon\right|\left|g\left(\left|\Delta_h u^\varepsilon\right|^2\right)\right|\cr\cr
						&&+c\left|\Delta_{-h}\left(\Delta_h u^\varepsilon\right)\right|\left|g\left(\left|\Delta_h u^\varepsilon\right|^2\right)\right|\cr\cr
						&&+c_M\left|\Delta_{h} u^\varepsilon\right|\left|\Delta_{-h}Du^\varepsilon\right|
					\end{eqnarray}
					Since
					$\Delta_{h}u^\varepsilon\to Du^\varepsilon$, $\Delta_{-h}\left(\Delta_h u^\varepsilon\right)\to D^2u^\varepsilon$, $\Delta_{-h}Du^\varepsilon\to D^2u^\varepsilon$ strongly in $L^{2}\left(0,T; L^2_\loc\left(\Omega\right)\right)$ as $h\to0$, and $f^\varepsilon\in C^\infty\left(\Omega_T\right)$, 
					thanks to \eqref{critical_g}, we have
					\begin{equation}\label{I_{h, 6}}
						\lim_{h\to0}I_{h, 6}=\int_{Q^{\tau}}\chi(t)\left|f^\varepsilon\right|\left|D\left(\eta^2Du^\varepsilon\cdot g\left(\left|Du^\varepsilon\right|^2\right)\right)\right|\,dz.
					\end{equation}
					So, collecting \eqref{I_{h, 1}I_{h, 2}I_{h, 3}}, \eqref{I_{h, 7}}, \eqref{I_{h, 4}}, \eqref{I_{h, 5}} and \eqref{I_{h, 6}}, we can pass to the limit as $h\to0$ in \eqref{eq22*}, thus getting
					\begin{eqnarray}\label{eq22lim}
						&&\frac{1}{2}\int_{B_{{2\rho}}}\eta^2\chi(\tau)\left(\int_0^{\left|Du^\varepsilon(x,\tau)\right|^2}g(s)\,ds\right)\,dx\cr\cr
						&&+{c_p}\int_{Q^{\tau}}\eta^2\chi(t)\cdot g\left(\left|Du^\varepsilon\right|^2\right){\left|D^2u^\varepsilon\right|^2\frac{\left(\left|Du^\varepsilon\right|-1\right)^p}{\left|Du^\varepsilon\right|^2}}\,dz\cr\cr
						&&+c_p\varepsilon\int_{{Q^{\tau}}}\eta^2\chi(t)g\left(\left|Du^\varepsilon\right|^2\right)\left|DV_{p}\left(Du^\varepsilon\right)\right|^2\,dz\cr\cr
						&\le&2\int_{Q^{\tau}}\eta\chi(t)\left|D\eta\right|\left|DH_{p-1}\left(Du^\varepsilon\right)\right|\left|Du^\varepsilon\right| \cdot g\left(\left|Du^\varepsilon\right|^2\right)\,dz\cr\cr
						&&+2\varepsilon	\int_{Q^{\tau}}\eta\chi(t)\left|D\eta\right|\left|D\left(\left(1+\left|Du^\varepsilon\right|^2\right)^{\frac{p-2}{2}}Du^\varepsilon\right)\right|\left|Du^\varepsilon\right| \cdot g\left(\left|Du^\varepsilon\right|^2\right)\,dz\cr\cr
						&&+\int_{Q^{\tau}}\chi(t)\left|f^\varepsilon\right|\left|D\left(\eta^2Du^\varepsilon\cdot g\left(\left|Du^\varepsilon\right|^2\right)\right)\right|\,dz\cr\cr
						&&+\frac{1}{2}\int_{Q^{\tau}}\eta^2\partial_t\chi(t)\left(\int_0^{\left|Du^\varepsilon\right|^2}g(s)\,ds\right)\,dz\cr\cr
						&=:&\tilde{I}_1+\tilde{I}_2+\tilde{I}_3+\tilde{I}_4,
					\end{eqnarray}
					for every $g\in W^{1,\infty}(0,+\infty)$ such that \eqref{**} holds true.
					Now, by \eqref{DH_p-1} and by Young's inequality,  we have  
					\begin{eqnarray}\label{tilde{I}_1}
						\tilde{I}_1+\tilde{I}_2&\le&c_p\int_{Q^{\tau}}\eta\chi(t)\left|D\eta\right|\left(\left|Du^\varepsilon\right|-1\right)_+^{p-2}\left|D^2u^\varepsilon\right|\left|Du^\varepsilon\right| \cdot g\left(\left|Du^\varepsilon\right|^2\right)\,dz\cr\cr
						&&+c_p\cdot\varepsilon\int_{Q^{\tau}}\eta\chi(t)\left|D\eta\right|\left(1+\left|Du^\varepsilon\right|^2\right)^{\frac{p-1}{2}}\left|D^2u^\varepsilon\right| \cdot g\left(\left|Du^\varepsilon\right|^2\right)\,dz\cr\cr
						&\le&\sigma\int_{Q^{\tau}}\eta^2\chi(t){\frac{\left(\left|Du^\varepsilon\right|-1\right)_+^p}{\left|Du^\varepsilon\right|^2}}\left|D^2u^\varepsilon\right|^2\cdot g\left(\left|Du^\varepsilon\right|^2\right)\,dz\cr\cr
						&&+\sigma\varepsilon \int_{Q^{\tau}}\eta^2\chi(t)\left(1+\left|Du^\varepsilon\right|^2\right)^{\frac{p-2}{2}}\left|D^2u^\varepsilon\right|^2\cdot g\left(\left|Du^\varepsilon\right|^2\right)\,dz\cr\cr
						&&+c_\sigma\int_{Q^{\tau}}\chi(t)\left|D\eta\right|^2{\left(\left|Du^\varepsilon\right|-1\right)_+^{p-4}\left|Du^\varepsilon\right|^4}\cdot g\left(\left|Du^\varepsilon\right|^2\right)\,dz\cr\cr
						&&+c_{p,\sigma}\cdot \varepsilon\int_{Q^{\tau}}\chi(t)\left|D\eta\right|^2\left(1+\left|Du^\varepsilon\right|^2\right)^{\frac{p}{2}}\cdot g\left(\left|Du^\varepsilon\right|^2\right)\,dz\cr\cr
						&\le&\sigma\int_{Q^{\tau}}\eta^2\chi(t){\frac{\left(\left|Du^\varepsilon\right|-1\right)_+^p}{\left|Du^\varepsilon\right|^2}}\left|D^2u^\varepsilon\right|^2\cdot g\left(\left|Du^\varepsilon\right|^2\right)\,dz\cr\cr
						&&+\sigma\varepsilon \int_{Q^{\tau}}\eta^2\chi(t)\left|DV_p\left(Du^\varepsilon\right)\right|^2\cdot g\left(\left|Du^\varepsilon\right|^2\right)\,dz\cr\cr
						&&+c_{\sigma,p}\left\Vert D\eta\right\Vert_{L^\infty}^2\left\Vert g\right\Vert_{L^\infty}\int_{Q^{\tau}}\chi(t)\left(1+\left|Du^\varepsilon\right|\right)^p\,dz,
					\end{eqnarray}
					where we used  \eqref{derV}, and where  $\sigma>0$ is a parameter that will be chosen later.\\
					Now, using Young's Inequality, we estimate the term $\tilde{I}_3$, as follows
					\begin{eqnarray}\label{tilde{I}_3}
						\tilde{I}_3&\le&c\int_{Q^{\tau}}\chi(t)\left|f^\varepsilon\right|\eta\left|D\eta\right|\left|Du^\varepsilon\right|\cdot g\left(\left|Du^\varepsilon\right|^2\right)\,dz\cr\cr
						&&+c\int_{Q^{\tau}}\chi(t)\left|f^\varepsilon\right|\eta^2\left|D^2u^\varepsilon\right|\cdot g\left(\left|Du^\varepsilon\right|^2\right)\,dz\cr\cr
						&&+c\int_{Q^{\tau}}\chi(t)\left|f^\varepsilon\right|\eta^2\left|Du^\varepsilon\right|^2\left|D^2u^\varepsilon\right|\cdot g'\left(\left|Du^\varepsilon\right|^2\right)\,dz\cr\cr
						&\le&c\left\Vert D\eta\right\Vert_\infty\left\Vert g\right\Vert_{L^\infty}\int_{Q^{\tau}}\eta\chi(t)\left|f^\varepsilon\right|^2\,dz\cr&&+ c\left\Vert D\eta\right\Vert_\infty\left\Vert g\right\Vert_{L^\infty}\int_{Q^\tau}\eta\chi(t)\left|Du^\varepsilon\right|^2\,dz\cr\cr
						&&+c\int_{Q^{\tau}}\eta^2\chi(t)\left|f^\varepsilon\right|\left|D^2u^\varepsilon\right|\cdot g\left(\left|Du^\varepsilon\right|^2\right)\,dz\cr\cr
						&&+c\int_{Q^{\tau}}\eta^2\chi(t)\left|f^\varepsilon\right|\left|Du^\varepsilon\right|^2\left|D^2u^\varepsilon\right|\cdot g'\left(\left|Du^\varepsilon\right|^2\right)\,dz.
					\end{eqnarray}
					Plugging \eqref{tilde{I}_1} and \eqref{tilde{I}_3} into \eqref{eq22lim}, we get
					\begin{eqnarray*}\label{eq22lim*}
						&&\frac{1}{2}\int_{B_{{2\rho}}}\eta^2\chi(\tau)\left(\int_0^{\left|Du^\varepsilon(x,\tau)\right|^2}g(s)\,ds\right)\,dx\cr\cr
						&&+c_p\int_{Q^{\tau}}\eta^2\chi(t)\cdot g\left(\left|Du^\varepsilon\right|^2\right){\frac{\left(\left|Du^\varepsilon\right|-1\right)_+^p}{\left|Du^\varepsilon\right|^2}}\left|D^2u^\varepsilon\right|^2\,dz\cr\cr		&&+c_p\varepsilon\int_{{Q^{\tau}}}\eta^2\chi(t)g\left(\left|Du^\varepsilon\right|^2\right)\left|DV_{p}\left(Du^\varepsilon\right)\right|^2\,dz\cr\cr
						&\le&\sigma\int_{Q^{\tau}}\eta^2\chi(t){\frac{\left(\left|Du^\varepsilon\right|-1\right)_+^p}{|Du^\varepsilon|^2}}\left|D^2u^\varepsilon\right|^2\cdot g\left(\left|Du^\varepsilon\right|^2\right)\,dz\cr\cr
						&&+\sigma\varepsilon \int_{Q^{\tau}}\eta^2\chi(t)\left|DV_p\left(Du^\varepsilon\right)\right|^2\cdot g\left(\left|Du^\varepsilon\right|^2\right)\,dz\cr\cr
						&&+c_{p,\sigma}\left\Vert D\eta\right\Vert_\infty\left\Vert g\right\Vert_{L^\infty}\int_{Q^{\tau}}\eta\chi(t)\left|f^\varepsilon\right|^2\,dz\cr&&+ c_{p,\sigma}\Vert D\eta\Vert_\infty\left\Vert g\right\Vert_{L^\infty}\int_{Q^\tau}\eta\chi(t)\left(1+|Du^\varepsilon|\right)^p\,dz\cr\cr
						&&+c\int_{Q^{\tau}}\eta^2\chi(t)\left|f^\varepsilon\right|\left|D^2u^\varepsilon\right|\cdot g\left(\left|Du^\varepsilon\right|^2\right)\,dz\cr\cr
						&&+c\int_{Q^{\tau}}\eta^2\chi(t)\left|f^\varepsilon\right|\left|Du^\varepsilon\right|^2\left|D^2u^\varepsilon\right|\cdot g'\left(\left|Du^\varepsilon\right|^2\right)\,dz\cr\cr
						&&+\frac{1}{2}\int_{Q^{\tau}}\eta^2\partial_t\chi(t)\left(\int_0^{\left|Du^\varepsilon\right|^2}g(s)\,ds\right)\,dz, 
					\end{eqnarray*}
					which, for a sufficiently small $\sigma$, gives
					\begin{eqnarray*}\label{eq22lim'**}
						&&\frac{1}{2}\int_{B_{{2\rho}}}\eta^2\chi(\tau)\left(\int_0^{\left|Du^\varepsilon(x,\tau)\right|^2}g(s)\,ds\right)\,dx\cr\cr
						&&+c_p\int_{Q^{\tau}}\eta^2\chi(t)\cdot g\left(\left|Du^\varepsilon\right|^2\right){\frac{\left(\left|Du^\varepsilon\right|-1\right)_+^p}{|Du^\varepsilon|^2}}\left|D^2u^\varepsilon\right|^2\,dz\cr\cr
						&&+c_p\varepsilon\int_{{Q^{\tau}}}\eta^2\chi(t)g\left(\left|Du^\varepsilon\right|^2\right)\left|DV_{p}\left(Du^\varepsilon\right)\right|^2\,dz\cr\cr
						&\le&c_{p}\Vert D\eta\Vert_\infty\left\Vert g\right\Vert_{L^\infty}\int_{Q^{\tau}}\eta\chi(t)\left|f^\varepsilon\right|^2\,dz\cr&&+ c_{p}\Vert D\eta\Vert_\infty\left\Vert g\right\Vert_{L^\infty}\int_{Q^\tau}\eta\chi(t)\left(1+|Du^\varepsilon|\right)^p\,dz\cr\cr
						&&+c\int_{Q^{\tau}}\eta^2\chi(t)\left|f^\varepsilon\right|\left|D^2u^\varepsilon\right|\cdot g\left(\left|Du^\varepsilon\right|^2\right)\,dz\cr\cr
						&&+c\int_{Q^{\tau}}\eta^2\chi(t)\left|f^\varepsilon\right|\left|Du^\varepsilon\right|^2\left|D^2u^\varepsilon\right|\cdot g'\left(\left|Du^\varepsilon\right|^2\right)\,dz\cr\cr
						&&+\frac{1}{2}\int_{Q^{\tau}}\eta^2\partial_t\chi(t)\left(\int_0^{\left|Du^\varepsilon\right|^2}g(s)\,ds\right)\,dz,
					\end{eqnarray*}
					that, neglecting the third integral in the left hand side, implies
					\begin{eqnarray}\label{eq22lim**}
						&&\frac{1}{2}\int_{B_{{2\rho}}}\eta^2\chi(\tau)\left(\int_0^{\left|Du^\varepsilon(x,\tau)\right|^2}g(s)\,ds\right)\,dx\cr\cr
						&&+c_p\int_{Q^{\tau}}\eta^2\chi(t)\cdot g\left(\left|Du^\varepsilon\right|^2\right){\frac{\left(\left|Du^\varepsilon\right|-1\right)_+^p}{\left|Du^\varepsilon\right|^2}}\left|D^2u^\varepsilon\right|^2\,dz\cr\cr
						&\le&c_{p}\Vert D\eta\Vert_\infty\left\Vert g\right\Vert_{L^\infty}\int_{Q^{\tau}}\eta\chi(t)\left|f^\varepsilon\right|^2\,dz\cr&&+ c_{p}\Vert D\eta\Vert_\infty\left\Vert g\right\Vert _{L^\infty}\int_{Q^\tau}\eta\chi(t)\left(1+|Du^\varepsilon|\right)^p\,dz\cr\cr
						&&+c\int_{Q^{\tau}}\eta^2\chi(t)\left|f^\varepsilon\right|\left|D^2u^\varepsilon\right|\cdot g\left(\left|Du^\varepsilon\right|^2\right)\,dz\cr\cr
						&&+c\int_{Q^{\tau}}\eta^2\chi(t)\left|f^\varepsilon\right|\left|Du^\varepsilon\right|^2\left|D^2u^\varepsilon\right|\cdot g'\left(\left|Du^\varepsilon\right|^2\right)\,dz\cr\cr
						&&+\frac{1}{2}\int_{Q^{\tau}}\eta^2\partial_t\chi(t)\left(\int_0^{\left|Du^\varepsilon\right|^2}g(s)\,ds\right)\,dz,
				\end{eqnarray}}

				Now, for $\delta\in (0,1)$, recalling the notation in \eqref{gkdef}, we choose 
				
				$$
				g(s)=g_{1+\delta}\left(\left(s-1-\delta\right)_+\right)
				$$
				
				that is
				$$
				g(s)=\frac{\left(s-1-\delta\right)_+^2}{1+\delta+\left(s-1-\delta\right)_+^2},
				$$
				
				that is legitimate since $g\in W^{1,\infty}([0,+\infty)).$\\
				{Moreover, with this choice, we have
					$g(s)\in[0,1]$, for every $s\ge0$,
					and thanks to \eqref{g_k'est}, there exists a constant $c_\delta>0$ such that
					$$sg'\left(s^2\right)\le c_\delta\qquad\mbox{ for every }s\ge0,$$
					so that \eqref{**} holds.}
				Therefore, since $g(s)$ vanishes on the set where $s\le 1+\delta$  and $g(s)\le 1$ for every $s$, \eqref{eq22lim**} becomes
				\begin{eqnarray*}\label{eq22ter}
					&&\frac{1}{2}\int_{B_{{2\rho}}}\eta^2\chi(\tau)\left(\int_0^{\left|Du^\varepsilon(x,\tau)\right|^2}g(s)\,ds\right)\,dx\cr\cr
					&&+c_p\int_{Q^{\tau}}\eta^2\chi(t)\cdot g\left(\left|D u^\varepsilon\right|^2\right){\frac{\left(\left|Du^\varepsilon\right|-1\right)_+^p}{|Du^\varepsilon|^2}}\left|D^2u^\varepsilon\right|^2\,dz\cr\cr
					&\le& c\int_{Q^{\tau}\cap \{{\left|Du^\varepsilon\right|^2}>1+\delta\}}\eta^2\chi(t)\left|f^\varepsilon\right|\left|D^2 u^\varepsilon\right|{\frac{\left(\left|Du^\varepsilon\right|-1\right)_+^{\frac{p}{2}}}{\left|Du^\varepsilon\right|}}{\frac{\left|Du^\varepsilon\right|}{\left(\left|Du^\varepsilon\right|-1\right)_+^{\frac{p}{2}}}}\cdot g\left(\left|D u^\varepsilon\right|^2\right)\,dz\cr\cr
					&&+c\int_{Q^{\tau}\cap \{{\left|Du^\varepsilon\right|^2}>1+\delta\}}\eta^2\chi(t)\left|f^\varepsilon\right|\left|D u^\varepsilon\right|^2{\frac{\left(\left|Du^\varepsilon\right|-1\right)_+^{\frac{p}{2}}}{\left|Du^\varepsilon\right|}}{\frac{\left|Du^\varepsilon\right|}{\left(\left|Du^\varepsilon\right|-1\right)_+^{\frac{p}{2}}}}\left|D^2 u^\varepsilon\right|
					g'\left(\left|Du^\varepsilon\right|^2\right)\,dz\cr\cr
					&&+ c_{p}\Vert D\eta\Vert_\infty\left\Vert\chi\right\Vert_{L^\infty}\int_{Q^\tau}\left(1+|Du^\varepsilon|^p+\left|f^\varepsilon\right|^2\right)\,dz+\int_{Q^{\tau}}\eta^2\partial_t\chi(t)\left(\int_0^{\left|Du^\varepsilon\right|^2}g(s)\,ds\right)\,dz\cr\cr
					&\le& {\frac{c_p}{\delta^{\frac{p}{2}}}}\int_{Q^{\tau}}\eta^2\chi(t)\left|f^\varepsilon\right|\left|D^2 u^\varepsilon\right|{\frac{\left(\left|Du^\varepsilon\right|-1\right)_+^{\frac{p}{2}}}{\left|Du^\varepsilon\right|}}\cdot g\left(\left|D u^\varepsilon\right|^2\right)\,dz\cr\cr
					&&+{\frac{c_p}{\delta^{\frac{p}{2}}}}\int_{Q^{\tau}}\eta^2\chi(t)\left|f^\varepsilon\right|\left|D u^\varepsilon\right|^2{\frac{\left(\left|Du^\varepsilon\right|-1\right)_+^{\frac{p}{2}}}{\left|Du^\varepsilon\right|}}\left|D^2 u^\varepsilon \right|
					g'\left(\left|Du^\varepsilon\right|^2\right)\,dz\cr\cr
					&&+ c_{p}\Vert D\eta\Vert_\infty\left\Vert\chi\right\Vert_{L^\infty}\int_{Q^\tau}\left(1+|Du^\varepsilon|^p+\left|f^\varepsilon\right|^2\right)\,dz+\int_{Q^{\tau}}\eta^2\partial_t\chi(t)\left(\int_0^{\left|Du^\varepsilon\right|^2}g(s)\,ds\right)\,dz,
				\end{eqnarray*}
				{where we used that  $$\sup_{x\in \left(\sqrt{1+\delta},+\infty\right)}\frac{x}{(x-1)^{\frac{p}{2}}}=\frac{\sqrt{1+\delta}}{\left(\sqrt{1+\delta}-1\right)^{\frac{p}{2}}}=\frac{\sqrt{1+\delta}\left(\sqrt{1+\delta}+1\right)^{\frac{p}{2}}}{\delta^{\frac{p}{2}}}\le \frac{c_p}{\delta^{\frac{p}{2}}},$$}
				since $\delta<1$. Using   Young's inequality in the first integral in the right hand, previous estimate yields
				\begin{eqnarray*}\label{eq22quater}
					&&\frac12\int_{B_{{2\rho}}}\eta^2\chi(\tau)\left(\int_0^{\left|Du^\varepsilon(x,\tau)\right|^2}g(s)\,ds\right)\,dx\cr\cr
					&&+c_p\int_{Q^{\tau}}\eta^2\chi(t)\cdot g\left(\left|Du^\varepsilon\right|^2\right){\frac{\left(\left|Du^\varepsilon\right|-1\right)_+^p}{\left|Du^\varepsilon\right|^2}}\left|D^2u^\varepsilon\right|^2\,dz\cr\cr
					&\le& {\frac{c_p(\beta)}{\delta^p}}\int_{Q^{\tau}}\eta^2\chi(t)\left|f^\varepsilon\right|^2\cdot g\left(\left|D u^\varepsilon\right|^2\right)\,dz\cr\cr
					&&+\beta\int_{Q^{\tau}}\eta^2\chi(t){\frac{\left(\left|Du^\varepsilon\right|-1\right)_+^p}{\left|Du^\varepsilon\right|^2}}\left|D^2u^\varepsilon\right|^2\cdot g\left(\left|D u^\varepsilon\right|^2\right)\,dz\cr\cr
					&&+{\frac{c_p}{\delta^{\frac{p}{2}}}}\int_{Q^{\tau}}\eta^2\chi(t)\left|f^\varepsilon\right|\left|D u^\varepsilon\right|^2{\frac{\left(\left|Du^\varepsilon\right|-1\right)_+^{\frac{p}{2}}}{\left|Du^\varepsilon\right|}}\left|D^2 u^\varepsilon \right|
					g'\left(\left|Du^\varepsilon\right|^2\right)\,dz\cr\cr
					&&+ c_{p}\Vert D\eta\Vert_\infty\left\Vert\chi\right\Vert_{L^\infty}\int_{Q^\tau}\left(1+|Du^\varepsilon|^p+\left|f^\varepsilon\right|^2\right)\,dz\cr\cr
					&&+\int_{Q^{\tau}}\eta^2\partial_t\chi(t)\left(\int_0^{\left|Du^\varepsilon\right|^2}g(s)\,ds\right)\,dz.
				\end{eqnarray*}
				Choosing $\beta$ sufficiently small, reabsorbing the second integral in the right hand side by the left hand side and using that $g(s)\le 1$, we get
				\begin{eqnarray}\label{eq225}
					&&\int_{B_{{2\rho}}}\eta^2\chi(\tau)\left(\int_0^{\left|Du^\varepsilon(x,\tau)\right|^2}g(s)\,ds\right)\,dx\cr\cr
					&&+\int_{Q^{\tau}}\eta^2\chi(t)\cdot g\left(\left|D u^\varepsilon\right|^2\right){\frac{\left(\left|Du^\varepsilon\right|-1\right)_+^p}{\left|Du^\varepsilon\right|^2}}\left|D^2u^\varepsilon\right|^2\,dz\cr\cr
					&\le& c{\frac{c_p}{\delta^{\frac{p}{2}}}}\int_{Q^{\tau}}\eta^2\chi(t)\left|f^\varepsilon\right|\left|D u^\varepsilon\right|^2{\frac{\left(\left|Du^\varepsilon\right|-1\right)_+^{\frac{p}{2}}}{\left|Du^\varepsilon\right|}}\left|D^2 u^\varepsilon \right|
					g'\left(\left|Du^\varepsilon\right|^2\right)\,dz\cr\cr
					&&+\int_{Q^{\tau}}\eta^2\partial_t\chi(t)\left(\int_0^{\left|Du^\varepsilon\right|^2}g(s)\,ds\right)\,dz\cr\cr
					&&c\left\Vert D\eta\right\Vert_{\infty}^2\left\Vert\chi\right\Vert_{\infty}\int_{Q^{\tau}}\left(1+\left|D u^\varepsilon\right|\right)^p\,dz\cr\cr
					&&+c\left\Vert\chi\right\Vert_{L^\infty}\left({\frac{c_p}{\delta^p}}+\left\Vert D\eta\right\Vert_{L^\infty}\right)\int_{Q^{\tau}}\left|f^\varepsilon\right|^2\,dz.
				\end{eqnarray}
				We now estimate the first integral in the right side of previous inequality with the use of \eqref{propgfinale} with $s=\left|Du^\varepsilon\right|^2$, $A={\displaystyle\frac{\left(\left|Du^\varepsilon\right|-1\right)_+^{\frac{p}{2}}}{\left|Du^\varepsilon\right|}}\left|D^2 u^\varepsilon \right|$, $B={\displaystyle\frac{c_p}{\delta^{\frac{p}{2}}}}\left|f^\varepsilon\right|$ and $k=1+\delta$, thus getting
				\begin{eqnarray*}\label{Ii}
					&&{\frac{c_p}{\delta^{\frac{p}{2}}}}\int_{Q^{\tau}}\eta^2\chi(t)\left|f^\varepsilon\right|\left|D u^\varepsilon\right|^2{\frac{\left(\left|Du^\varepsilon\right|-1\right)_+^{\frac{p}{2}}}{\left|Du^\varepsilon\right|}}\left|D^2 u^\varepsilon \right|
					g'\left(\left|Du^\varepsilon\right|^2\right)\,dz\cr\cr
					&\le&{2}\alpha \int_{Q^{\tau}}\eta^2\chi(t){\frac{\left(\left|Du^\varepsilon\right|-1\right)_+^{p}}{\left|Du^\varepsilon\right|^2}}\left|D^2 u^\varepsilon \right|^2
					g\left(\left|Du^\varepsilon\right|^2\right)\,dz\cr\cr
					&&+{2}\alpha\sigma\int_{Q^{\tau}}\eta^2\chi(t){\frac{\left(\left|Du^\varepsilon\right|-1\right)_+^{p}}{\left|Du^\varepsilon\right|^2}}\left|D^2 u^\varepsilon \right|^2\,dz\cr\cr
					&&+{\frac{c_{\alpha,p}}{\delta^p}}\int_{Q^{\tau}}\eta^2\chi(t)\left|f^\varepsilon\right|^2\,dz,
				\end{eqnarray*}
				with  constants $c,c_\alpha$ both independent of $\sigma$ {and where we used that $\delta<1$}. By virtue of \eqref{derH},  taking the limit as $\sigma\to 0$ in previous inequality, we have
				\begin{eqnarray}\label{Iii}
					&&{\frac{c_p}{\delta^{\frac{p}{2}}}}\int_{Q^{\tau}}\eta^2\chi(t)\left|f^\varepsilon\right|\left|D u^\varepsilon\right|^2{\frac{\left(\left|Du^\varepsilon\right|-1\right)_+^{\frac{p}{2}}}{\left|Du^\varepsilon\right|}}\left|D^2 u^\varepsilon \right|
					g'\left(\left|Du^\varepsilon\right|^2\right)\,dz\cr\cr
					&\le&  {2}\alpha \int_{Q^{\tau}}\eta^2\chi(t){\frac{\left(\left|Du^\varepsilon\right|-1\right)_+^{p}}{\left|Du^\varepsilon\right|^2}}\left|D^2 u^\varepsilon \right|^2
					g\left(\left|Du^\varepsilon\right|^2\right)\,dz\cr\cr
					&&+{\frac{c_{\alpha,p}}{\delta^p}}\int_{Q^{\tau}}\eta^2\chi(t)\left|f^\varepsilon\right|^2\,dz,
				\end{eqnarray}
				Inserting \eqref{Iii}  in \eqref{eq225}, we find
				\begin{eqnarray*}\label{eq226*}
					&&\int_{B_{{2\rho}}}\eta^2\chi(\tau)\left(\int_0^{\left|Du^\varepsilon(x,\tau)\right|^2}g(s)\,ds\right)\,dx\cr\cr
					&&+\int_{Q^{\tau}}\eta^2\chi(t)\cdot g\left(\left|D u^\varepsilon\right|^2\right){\frac{\left(\left|Du^\varepsilon\right|-1\right)_+^{p}}{\left|Du^\varepsilon\right|^2}}\left|D^2 u^\varepsilon \right|^2\,dz\cr\cr
					&\le&{2}\alpha \int_{Q^{\tau}}\eta^2\chi(t){\frac{\left(\left|Du^\varepsilon\right|-1\right)_+^{p}}{\left|Du^\varepsilon\right|^2}}\left|D^2 u^\varepsilon \right|^2
					g\left(\left|Du^\varepsilon\right|^2\right)\,dz\cr\cr
					&&+{\frac{c_{\alpha,p}}{\delta^p}}\int_{Q^{\tau}}\eta^2\chi(t)|f^\varepsilon|^2\,dz\cr\cr
					&&+\int_{Q^{\tau}}\eta^2\partial_t\chi(t)\left(\int_0^{\left|Du^\varepsilon\right|^2}g(s)\,ds\right)\,dz\cr\cr
					&&c\left\Vert D\eta\right\Vert_{\infty}^2\left\Vert\chi\right\Vert_{\infty}\int_{Q^{\tau}}\left(1+|D u^\varepsilon|\right)^p\,dz\cr\cr
					&&+c\left\Vert\chi\right\Vert_{L^\infty}\left({\frac{c_p}{\delta^p}}+\left\Vert D\eta\right\Vert_{L^\infty}\right)\int_{Q^{\tau}}\left|f^\varepsilon\right|^2\,dz.
				\end{eqnarray*}
				Choosing $\alpha=\displaystyle\frac{1}{4}$ , we can reabsorb the first integral in the right hand side by the left hand side, thus obtaining
				\begin{eqnarray}\label{eq226***}
					&&\int_{B_{{2\rho}}}\eta^2\chi(\tau)\left(\int_0^{\left|Du^\varepsilon(x,\tau)\right|^2}g(s)\,ds\right)\,dx\cr\cr
					&&+\int_{Q^{\tau}}\eta^2\chi(t)\cdot g\left(\left|D u^\varepsilon\right|^2\right){\frac{\left(\left|Du^\varepsilon\right|-1\right)_+^{p}}{|Du^\varepsilon|^2}}\left|D^2 u^\varepsilon \right|^2\,dz\cr\cr
					&\le& c\left\Vert D\eta\right\Vert_{\infty}^2\left\Vert\chi\right\Vert_{\infty}\int_{Q^{\tau}}\left(1+\left|D u^\varepsilon\right|\right)^p\,dz\cr\cr
					&&+{\frac{c}{\delta^p}}\left\Vert\chi\right\Vert_{L^\infty}\left(1+\left\Vert D\eta\right\Vert_{L^\infty}\right)\int_{Q^{\tau}}\left|f^\varepsilon\right|^2\,dz\cr\cr 
					&&+c\int_{Q^{\tau}}\eta^2\partial_t\chi(t)\left(\int_0^{\left|Du^\varepsilon\right|^2}g(s)\,ds\right)\,dz.
				\end{eqnarray}
				By the definition of $g$, we have
				$$
				\int_0^\zeta g(s)\,ds=
				\begin{cases}
					0\qquad&\mbox{ if }\quad 0<\zeta\le1+\delta\vspace{11pt}\\
					\displaystyle\int_{1+\delta}^\zeta\frac{\left(s-1-\delta\right)^2}{1+\delta+\left(s-1-\delta\right)^2}\,ds\qquad&\mbox{ if }\quad\zeta>1+\delta,
				\end{cases}
				$$
				
				and so it is easy to check that
				
				$$
				\int_0^\zeta g(s)\,ds=\begin{cases}
					0\qquad&\mbox{ if }\quad 0<\zeta\le1+\delta\vspace{11pt}\\
					\zeta-1-\delta-\sqrt{1+\delta}\arctan\displaystyle\left(\frac{\zeta-1-\delta}{\sqrt{1+\delta}}\right)\qquad&\mbox{ if }\quad \zeta>1+\delta,
				\end{cases}
				$$
				
				that is 
				
				\begin{equation*}\label{intg}
					\int_0^\zeta g(s)\,ds=
					\left(\zeta-1-\delta\right)_+-\sqrt{1+\delta}\arctan\left[\frac{\left(\zeta-1-\delta\right)_+}{\sqrt{1+\delta}}\right]. 
				\end{equation*}
				Therefore, by previous equality and the properties of $\chi$ and $\eta$, \eqref{eq226***} implies
				\begin{eqnarray}\label{eq227}
					&&\int_{B_{{2\rho}}}\eta^2\chi(\tau)\left(\left|Du^\varepsilon(x,\tau)\right|^2-1-\delta\right)_+\,dx\cr\cr
					&&+\int_{Q^{\tau}}\eta^2\chi(t)\cdot g\left(\left|D u^\varepsilon\right|^2\right){\frac{\left(\left|Du^\varepsilon\right|-1\right)_+^{p}}{|Du^\varepsilon|^2}}\left|D^2 u^\varepsilon \right|^2\,dz\cr\cr
					&\le&c\left\Vert D\eta\right\Vert_{\infty}^2\left\Vert\chi\right\Vert_{\infty}\int_{Q^{\tau}}\left(1+\left|D u^\varepsilon\right|\right)^p\,dz\cr\cr
					&&+{\frac{c}{\delta^p}}\left\Vert\chi\right\Vert_{L^\infty}\left(1+\left\Vert D\eta\right\Vert_{L^\infty}\right)\int_{Q^{\tau}}\left|f^\varepsilon\right|^2\,dz\cr\cr 
					&&+c\int_{Q^\tau}\eta^2\partial_t\chi(t)\left(\left|Du^\varepsilon\right|^2-1-\delta\right)_+\,dz\cr\cr
					&&+c\left\Vert \partial_t\chi\right\Vert_{\infty}\left|Q^\tau\right|+c\left\Vert\chi\right\Vert_\infty\left|B_R\right|,
				\end{eqnarray}
				which holds for almost every $\tau\in\left(t_0-{4\rho^2}, t_0\right)$.\\ 
				We now choose a cut-off function $\eta\in C^{\infty}\left(B_{{2\rho}}\left(x_0\right)\right)$ with $\eta\equiv1$ on $B_\rho\left(x_0\right)$ such that $0\le\eta\le1$ and $\left|D\eta\right|\le\displaystyle\frac{c}{{\rho}}.$ For the cut-off function in time, we choose $\chi\in W^{1, \infty}\left(t_0-{R^2}, t_0, \left[0, 1\right]\right)$ such that $\chi\equiv0$ on $\left(t_0-{R^2}, t_0-{4\rho^2}\right]$, $\chi\equiv1$ on $\left[t_0-\rho^2, t_0\right)$ and $\partial_t\chi\le\displaystyle\frac{c}{{\rho^2}}$ on $\left(t_0-{4\rho^2}, t_0-\rho^2\right)$.
				With these choices, \eqref{eq227} gives
				
				\begin{eqnarray*}\label{eq228}
					&&\sup_{\tau\in\left(t_0-{4\rho^2}, t_0\right)}\int_{B_{\rho}}\chi(\tau)\left(\left|Du^\varepsilon(x,\tau)\right|^2-1-\delta\right)_+\,dx\cr\cr
					&&+\int_{Q_\rho}g\left(\left|D u^\varepsilon\right|^2\right){\frac{\left(\left|Du^\varepsilon\right|-1\right)_+^{p}}{\left|Du^\varepsilon\right|^2}}\left|D^2 u^\varepsilon \right|^2\,dz\cr\cr
					&\le&\frac{c}{{\rho^2}}\int_{Q_{2\rho}}\left(1+\left|D u^\varepsilon\right|^p\right)\,dz+\frac{c}{\rho^2\delta^p}\int_{Q_{2\rho}}\left|f^\varepsilon\right|^2 \,dz\cr\cr
					&&+\frac{c\left|Q_{2\rho}\right|}{\rho^2}+c\left|B_{2\rho}\right|,
				\end{eqnarray*}
				and since $\rho<2\rho<R<1$,  and $Q_{{2\rho}}=B_{\rho}\times\left(t_0-{4\rho^2}, t_0\right)$, we have
				\begin{eqnarray}\label{eq229*}
					&&\sup_{\tau\in\left(t_0-{4\rho}^2, t_0\right)}\int_{B_{\rho}}\left(\left|Du^\varepsilon(x,\tau)\right|^2-1-\delta\right)_+\,dx\cr\cr
					&&+\int_{Q_{\rho}}g\left(\left|D u^\varepsilon\right|^2\right){\frac{\left(\left|Du^\varepsilon\right|-1\right)_+^{p}}{\left|Du^\varepsilon\right|^2}}\left|D^2 u^\varepsilon \right|^2\,dz\cr\cr
					&\le&\frac{c}{\rho^2}\int_{Q_{2\rho}}\left(1+\left|D u^\varepsilon\right|^p\right)\,dz+\frac{c}{\rho^2\delta^p}\int_{Q_{2\rho}}\left|f^\varepsilon\right|^2 \,dz.
				\end{eqnarray}
				
				Now, with $\mathcal{G}_\delta(t)$ defined at \eqref{Gdef}, recalling \eqref{G'}, we have
				\begin{eqnarray*}
					&&\left|D\left[\mathcal{G}_\delta\left(\left(\left|Du^\varepsilon\right|-\delta -1\right)_+\right)\right]\right|^2\cr\cr
					&\le&  \frac{\left(\left|Du^\varepsilon\right|-\delta -1\right)_+^2}{1+\delta+\left(\left|Du^\varepsilon\right|-\delta -1\right)_+^2}\left[\left(\left|Du^\varepsilon\right|-\delta -1\right)_++\delta\right]^{p-2}\left|D^2u^\eps\right|^2\cr\cr 
					&=& g\left(\left|Du^\eps\right|\right)\left[\left(\left|Du^\varepsilon\right|-\delta -1\right)_++\delta\right]^{p-2}\left|D^2u^\eps\right|^2.
				\end{eqnarray*}
				Since $g(s)$ is nondecreasing, we have $g(s)\le g\left(s^2\right)$, and therefore
				\begin{eqnarray}\label{derG}
					&&\left|D\left[\mathcal{G}_\delta\left(\left(\left|Du^\varepsilon\right|-\delta -1\right)_+\right)\right]\right|^2\le g\left(\left|Du^\eps\right|^2\right)   \left(\left|Du^\varepsilon\right|-1\right)_+^{p-2}\left|D^2u^\eps\right|^2\cr\cr 
					&\le& {\frac{c_p}{\delta^2}} g\left(\left|Du^\eps\right|^2\right){\frac{\left(\left|Du^\varepsilon\right|-1\right)_+^{p}}{\left|Du^\varepsilon\right|^2}}\left|D^2 u^\varepsilon \right|^2,
				\end{eqnarray}
				where we also used that $g(s)=0$, for $0<s\le 1+\delta.$
				Using \eqref{derG} in the left hand side of \eqref{eq229*}, we obtain
				\begin{eqnarray*}
					&&\sup_{\tau\in\left(t_0-4\rho^2, t_0\right)}\int_{B_{\rho}}\left(\left|Du^\varepsilon(x,\tau)\right|^2-1-\delta\right)_+\,dx\cr\cr
					&&+\int_{Q_\rho}\left|D\left[\mathcal{G}_\delta\left(\left(\left|Du^\varepsilon\right|-\delta -1\right)_+\right)\right]\right|^2\,dz\cr\cr
					&\le&\frac{c}{\rho^2{\delta^2}}\left[\int_{Q_{2\rho}}\left(1+\left|D u^\varepsilon\right|^p\right)\,dz +\frac{1}{\delta^{p}}\int_{Q_{2\rho}}\left|f^\varepsilon\right|^2 \,dz\right],
				\end{eqnarray*}
				which is \eqref{uniformest}.
			\end{proof}
			Combining Lemma \ref{uniformestlemma} and Lemma \ref{lem:Giusti1}, we have the following.
			\begin{corollary}
				Let  $u^\varepsilon\in C^0\left(t_0-R^2,t_0;L^2\left(B_R\right)\right)\cap L^p\left(t_0-R^2,t_0; u+W_0^{1,p}\left(B_{R}\right)\right)$ be the unique solution to \eqref{app}. Then the following estimate
				\begin{eqnarray}\label{eq229t}
					&&\int_{Q_\frac{\rho}{2}}\left|\tau_h\left[\mathcal{G}_\delta\left(\left(\left|Du^\varepsilon\right|-\delta -1\right)_+\right)\right]\right|^2\,dz\cr\cr
					&\le&\frac{c|h|^2}{\rho^2{\delta^2}}\left[\int_{Q_{2\rho}}\left(1+\left|D u^\varepsilon\right|^p\right)\,dz +\frac{1}{\delta^{p}}\int_{Q_{2\rho}}\left|f^\varepsilon\right|^2\,dz\right]
				\end{eqnarray}
				holds for $\left|h\right|<\displaystyle\frac{\rho}{4}$, for any parabolic cylinder $Q_{2\rho}\Subset Q_R\left(z_0\right)$.
			\end{corollary}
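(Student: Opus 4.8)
The plan is to obtain \eqref{eq229t} by chaining the slice-wise spatial difference-quotient estimate of Lemma \ref{lem:Giusti1}, used with exponent $2$, with the uniform gradient bound of Lemma \ref{uniformestlemma}. Fix $\varepsilon\in(0,1]$ and $\delta\in(0,1)$ and abbreviate $F:=\mathcal{G}_\delta\left(\left(\left|Du^\varepsilon\right|-\delta-1\right)_+\right)$.

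First I would record that $F\in L^2_{\loc}\left(0,T;W^{1,2}_{\loc}\left(\Omega\right)\right)$: by \eqref{derH} one has $H_{\frac{p}{2}}\left(Du^\varepsilon\right)\in L^2_{\loc}\left(0,T;W^{1,2}_{\loc}\left(\Omega,\R^n\right)\right)$, and \eqref{derG} shows that $\left|DF\right|$ is pointwise dominated by a constant multiple of $\left(\left|Du^\varepsilon\right|-1\right)_+^{\frac{p-2}{2}}\left|D^2u^\varepsilon\right|$, which is locally square-integrable. In particular, for a.e.\ $t$ the slice $F(\cdot,t)$ belongs to $W^{1,2}\left(B_\rho\left(x_0\right)\right)$, so Lemma \ref{lem:Giusti1} applies on the ball $B_\rho\left(x_0\right)$ with inner radius $\frac{\rho}{2}$ and exponent $2$: for every $h$ with $\left|h\right|<\frac{\rho}{4}$ and a.e.\ $t$,
\[
\int_{B_{\rho/2}\left(x_0\right)}\left|\tau_h F(x,t)\right|^2\,dx\le c^2(n)\left|h\right|^2\int_{B_\rho\left(x_0\right)}\left|DF(x,t)\right|^2\,dx .
\]

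Next I would integrate this inequality over $t\in\left(t_0-\left(\frac{\rho}{2}\right)^2,t_0\right)$; since $\left(t_0-\left(\frac{\rho}{2}\right)^2,t_0\right)\subset\left(t_0-\rho^2,t_0\right)$, Fubini's theorem yields
\[
\int_{Q_{\rho/2}}\left|\tau_h F\right|^2\,dz\le c^2(n)\left|h\right|^2\int_{Q_\rho}\left|DF\right|^2\,dz .
\]
Finally, invoking Lemma \ref{uniformestlemma} applied to the triple $Q_\rho\Subset Q_{2\rho}\Subset Q_R$ (these inclusions being automatic from the hypothesis $Q_{2\rho}\Subset Q_R(z_0)$) to bound $\int_{Q_\rho}\left|DF\right|^2\,dz$, the right-hand side above is dominated by
\[
\frac{c(n,p)\left|h\right|^2}{\rho^2\delta^2}\left[\int_{Q_{2\rho}}\left(1+\left|Du^\varepsilon\right|^p\right)\,dz+\frac{1}{\delta^p}\int_{Q_{2\rho}}\left|f^\varepsilon\right|^2\,dz\right],
\]
which is exactly \eqref{eq229t}.

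The computation is essentially bookkeeping and I do not expect a genuine obstacle. The only point that calls for a little care is the nesting of the parabolic cylinders: one has to check that the time interval of $Q_{\rho/2}$ sits inside that of $Q_\rho$ and that $Q_\rho\Subset Q_{2\rho}\Subset Q_R$, so that Lemma \ref{lem:Giusti1} (on spatial slices for a.e.\ time) and Lemma \ref{uniformestlemma} may be applied with the stated dependence of the constants, and to make sure the measurability in $(x,t)$ is enough to pass from the slice-wise inequality to the full space-time integral via Fubini's theorem.
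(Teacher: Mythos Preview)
Your proposal is correct and follows exactly the approach the paper indicates: the Corollary is stated immediately after Lemma \ref{uniformestlemma} with the sentence ``Combining Lemma \ref{uniformestlemma} and Lemma \ref{lem:Giusti1}, we have the following,'' and no further details are given. Your slice-wise application of Lemma \ref{lem:Giusti1} followed by integration in time and insertion of \eqref{uniformest} is precisely the combination intended.
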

			\section{Proof of Theorem \ref{Theorem1}}
			This section is devoted to the proof of Theorem \ref{Theorem1}, that will be divided in two steps.\\
			In the first one we shall establish an estimate that will allow us to measure the $L^2$-distance between $H_{\frac{p}{2}}\left(Du\right)$ and $H_{\frac{p}{2}}\left(Du^\eps\right)$ in terms of the $L^2$-distance between $f$ and $f^\varepsilon$.\\
			In the second one, we conclude combining this comparison estimate with the one obtained for the difference quotient of the solution to the regularized problem at \eqref{eq229t}.
			\begin{proof}[Proof of Theorem \ref{Theorem1}]
				{\bf Step 1: the comparison estimate.}\\
				We formally proceed by testing equations (\ref{eq1*}) and (\ref{app}) with the map $\varphi=k(t)\left(u^\varepsilon-u\right)$,
				where $k\in W^{1,\infty}\left(\mathbb{R}\right)$ is chosen such that\\
				\[
				k(t)=\begin{cases}
					\quad1\qquad&\mbox{ if }\quad t\leq t_{2},\vspace{11pt}\\
					-\displaystyle\frac{1}{\omega}\left(t-t_{2}-\omega\right)\qquad&\mbox{ if }\quad t_{2}<t<t_{2}+\omega,\vspace{11pt}\\
					\quad0\qquad&\mbox{ if }\quad t\geq t_{2}+\omega,\quad\quad
				\end{cases}
				\]
				\\
				with $t_{0}-{R^{2}}<t_{2}<t_{2}\,+\,\omega<t_{0}$, and then letting
				$\omega\rightarrow0$. We observe that, at this stage, it is important
				that $u^\varepsilon$ and $u$ agree on the parabolic boundary $\partial_{\mathrm{par}}Q_{{R}}\left(z_{0}\right)$.\\
				Proceeding in a standard way (see for example \cite{DMS}), for almost every $t_{2}\in\left(t_{0}\,-\,{R^{2}},t_{0}\right)$,
				we find
				
				\begin{eqnarray}\label{eq:ee11}
					&&\frac{1}{2}\int_{{B_{R}}\left(x_{0}\right)}\left|u^\varepsilon\left(x,t_{2}\right)-u\left(x,t_{2}\right)\right|^{2}\,dx\cr\cr
					&&+\int_{Q_{{R},t_{2}}}\left\langle H_{p-1}\left(Du^\varepsilon\right)-H_{p-1}\left(Du\right),Du^\varepsilon-Du\right\rangle\,dz\cr\cr
					&&+\varepsilon\int_{Q_{{R},t_{2}}}\left\langle\left(1+\left|Du^\varepsilon\right|^2\right)^\frac{p-2}{2}Du^\varepsilon,Du^\varepsilon-Du\right\rangle\,dz\cr\cr
					&=&\int_{Q_{{R},t_{2}}}\left(f-f^\varepsilon\right)\left(u^\varepsilon-u\right)\,dz,
				\end{eqnarray}
				
				where we used the abbreviation $Q_{{R},t_{2}}=B_{{R}}\left(x_{0}\right)\times\left(t_{0}-{R^{2}},t_{2}\right)$.
				Using Lemma \ref{lem:Brasco}, the Cauchy-Schwarz inequality as well
				as Young's inequality, from (\ref{eq:ee11}) we infer
				\begin{eqnarray}\label{eq:ee12}
					&&\lambda_p\sup_{t\in\left(t_{0}-{R^{2}},t_{0}\right)}\left\Vert u^\varepsilon(\cdot,t)-u(\cdot,t)\right\Vert_{L^{2}\left(B_{{R}}\left(x_{0}\right)\right)}^{2}\cr\cr
					&&+\lambda_p\int_{Q_{{R}}}\left|H_{\frac{p}{2}}\left(Du^\varepsilon\right)-H_{\frac{p}{2}}\left(Du\right)\right|^{2}\,dz+\varepsilon\int_{Q_{{R}}\left(z_{0}\right)}\left|Du^\varepsilon\right|^{p}\,dz\cr\cr
					&\leq&\int_{Q_{{R}}}\left|f-f^\varepsilon\right|\left|u^\varepsilon-u\right|\,dz+\varepsilon\int_{Q_{{R}}}\left|Du^\varepsilon\right|^{p-1}\left|Du\right|\,dz\cr\cr
					&\leq&\int_{Q_{R}}\left|f-f^\varepsilon\right|\left|u^\varepsilon-u\right|\,dz+\varepsilon\cdot c_p\int_{Q_{R}}\left|Du\right|^{p}\,dz\cr\cr
					&&+\frac{1}{2}\cdot \varepsilon\int_{Q_{R}}\left|Du^\varepsilon\right|^{p}\,dz,
				\end{eqnarray}
				where we set $\lambda_p=\min\Set{\displaystyle\frac{1}{2}, \displaystyle\frac{4}{p^2}}.$ Reabsorbing the last integral in the right-hand side of \eqref{eq:ee12} by the left-hand side, we arrive at
				\begin{eqnarray}\label{eq:ee13}
					&&\sup_{t\in\left(t_{0}-{R}^{2},t_{0}\right)}\left\Vert u^\varepsilon(\cdot,t)-u(\cdot,t)\right\Vert_{L^{2}\left(B_{R}\left(x_{0}\right)\right)}^{2}\cr\cr
					&&+\int_{Q_{R}}\left|H_{\frac{p}{2}}\left(Du^\varepsilon\right)-H_{\frac{p}{2}}\left(Du\right)\right|^{2}\,dz+\frac{\varepsilon}{2\lambda_p}\int_{Q_{R}}\left|Du^\varepsilon\right|^{p}\,dz\cr\cr
					&\leq&\varepsilon\,c_p\int_{Q_{R}}\left|Du\right|^{p}\,dz+c_p\int_{Q_{R}}\left|f-f^\varepsilon\right|\left|u^\varepsilon-u\right|\,dz.
				\end{eqnarray}
				{Using in turn H\"older's inequality 
					and Lemma \ref{lem:inter}, 
					we get
					\begin{eqnarray}\label{eq:ee14}
						\tilde{I}&:=&\int_{Q_{R}}\left|f-f^{\varepsilon}\right|\left|u^{\varepsilon}-u\right|\,dz\cr\cr
						&\le&C\left({R},n,p\right)\left\Vert f-f^{\varepsilon}\right\Vert_{L^2\left(Q_{R}\right)}\cdot\left(\int_{Q_{R}}\left|u^{\varepsilon}-u\right|^{p+\frac{2p}{n}}\,dz\right)^\frac{n}{p(n+2)}\cr\cr
							&\leq&c\left(n,p,{R}\right)\left\Vert f-f^{\varepsilon}\right\Vert_{L^2\left(Q_{R}\right)}\cdot\left(\int_{Q_{R}}\left|Du^{\varepsilon}-Du\right|^{p}\,dz\right)^{\frac{n}{p\left(n+2\right)}}\cr\cr
							&&\cdot\left(\sup_{t\in\left(t_{0}-{R^{2}},t_{0}\right)}\left\Vert u^{\varepsilon}(\cdot,t)-u(\cdot,t)\right\Vert_{L^{2}\left(B_{R}\left(x_{0}\right)\right)}^{2}\right)^{\frac{1}{n+2}}
						\end{eqnarray}
						Now, let us notice that
						\begin{eqnarray}\label{eq:ee16}
							&&\int_{Q_{R}}\left|Du^{\varepsilon}-Du\right|^{p}\,dz\cr\cr
							&=&\int_{Q_{R}\cap\left\{ \left|Du^{\varepsilon}\right|\geq1\right\} }\left(\left|Du^{\varepsilon}\right|-1+1\right)^{p}\,dz+\int_{Q_{R}\cap\left\{ \left|Du^{\varepsilon}\right|<1\right\} }\left|Du^{\varepsilon}\right|^{p}\,dz+\int_{Q_{R}}\left|Du\right|^{p}\,dz\cr\cr
							&\leq&c_p\int_{Q_{R}}\left[\left(\left|Du^{\varepsilon}\right|-1\right)_{+}^{p}\right]\,dz+\int_{Q_{R}}\left(\left|Du\right|^{p}+1\right)\,dz\cr\cr
							&\le&c_p\int_{Q_{R}}\left(\left|H_{\frac{p}{2}}\left(Du^{\varepsilon}\right)-H_{\frac{p}{2}}\left(Du\right)+H_{\frac{p}{2}}\left(Du\right)\right|^{2}\right)\,dz+c_p\int_{Q_{R}}\left(\left|Du\right|^{p}+1\right)\,dz\cr\cr
							&\leq&c_p\int_{Q_{R}}\left|H_{\frac{p}{2}}\left(Du^{\varepsilon}\right)-H_{\frac{p}{2}}\left(Du\right)\right|^{2}\,dz+c_p\int_{Q_{R}}\left(\left|Du\right|^{p}+1\right)\,dz.
						\end{eqnarray}
						Inserting \eqref{eq:ee16} in \eqref{eq:ee14}, we get
						\begin{eqnarray*}
							\tilde{I}&\leq&c\left(n,p,{R}\right)\left\Vert f-f^{\varepsilon}\right\Vert_{L^2\left(Q_{R}\left(z_0\right)\right)}\cr\cr
							&&\cdot\left(\int_{Q_{R}}\left|H_{\frac{p}{2}}\left(Du^{\varepsilon}\right)-H_{\frac{p}{2}}\left(Du\right)\right|^{2}\,dz+\int_{Q_{R}}\left(\left|Du\right|^{p}+1\right)\,dz\right)^{\frac{n}{p\left(n+2\right)}}\cr\cr
							&&\cdot\left(\sup_{t\in\left(t_{0}-{R^{2}},t_{0}\right)}\left\Vert u^{\varepsilon}(\cdot,t)-u(\cdot,t)\right\Vert_{L^{2}\left(B_{R}\left(x_{0}\right)\right)}^{2}\right)^{\frac{1}{n+2}}\cr\cr
							&\leq&c\left(n,p,{R}\right)\left\Vert f-f^{\varepsilon}\right\Vert_{L^2\left(Q_{R}\right)}\cdot\left(\int_{Q_{R}}\left|H_{\frac{p}{2}}\left(Du^{\varepsilon}\right)-H_{\frac{p}{2}}\left(Du\right)\right|^{2}\,dz\right)^{\frac{n}{p\left(n+2\right)}}\cr\cr
							&&\cdot\left(\sup_{t\in\left(t_{0}-{R^{2}},t_{0}\right)}\left\Vert u^{\varepsilon}(\cdot,t)-u(\cdot,t)\right\Vert_{L^{2}\left(B_{R}\left(x_{0}\right)\right)}^{2}\right)^{\frac{1}{n+2}}\cr\cr
							&&+c\left(n,p,{R}\right)\left\Vert f-f^{\varepsilon}\right\Vert_{L^2\left(Q_{R}\right)}\cdot\left(\int_{Q_{R}}\left(\left|Du\right|^{p}+1\right)\,dz\right)^{\frac{n}{p\left(n+2\right)}}\cr\cr
							&&\cdot\left(\sup_{t\in\left(t_{0}-{R^{2}},t_{0}\right)}\left\Vert u^{\varepsilon}(\cdot,t)-u(\cdot,t)\right\Vert_{L^{2}\left(B_{R}\left(x_{0}\right)\right)}^{2}\right)^{\frac{1}{n+2}}
						\end{eqnarray*}
						and, by Young's inequality, 
						we get 
						\begin{eqnarray}\label{eq:ee14*}
							\tilde{I}
							&\leq&\beta\int_{Q_{R}}\left|H_{\frac{p}{2}}\left(Du^{\varepsilon}\right)-H_{\frac{p}{2}}\left(Du\right)\right|^{2}\,dz+\beta\sup_{t\in\left(t_{0}-{R^{2}},t_{0}\right)}\left\Vert u^{\varepsilon}(\cdot,t)-u(\cdot,t)\right\Vert_{L^{2}\left(B_{R}\left(x_{0}\right)\right)}^{2}\cr\cr
							&&+c\left(n,p,R,\beta\right)\left\Vert f-f^{\varepsilon}\right\Vert_{L^2\left(Q_{R}\right)}^{\frac{n+2}{n+1}}\cdot\left(\int_{Q_{R}}\left(\left|Du\right|^{p}+1\right)\,dz\right)^{\frac{n}{p\left(n+1\right)}}\cr\cr
							&&+c\left(n, p, R, \beta\right)\left\Vert f-f^{\varepsilon}\right\Vert_{L^2\left(Q_{R}\right)}^\frac{p\left(n+2\right)}{n\left(p-1\right)+p}.
						\end{eqnarray}
						Inserting \eqref{eq:ee14*} in \eqref{eq:ee13}, we obtain
						\begin{eqnarray}\label{eq:ee15*}
							&&\sup_{t\in(t_{0}-{R^{2}},t_{0})}\left\Vert u^{\varepsilon}(\cdot,t)-u(\cdot,t)\right\Vert_{L^{2}\left(B_{R}\left(x_{0}\right)\right)}^{2}\cr\cr
							&&+\int_{Q_{R}}\left|H_{\frac{p}{2}}\left(Du^{\varepsilon}\right)-H_{\frac{p}{2}}\left(Du\right)\right|^{2}\,dz+\frac{\varepsilon}{2\lambda_p}\int_{Q_{R}}\left|Du^{\varepsilon}\right|^{p}\,dz\cr\cr
							&\leq&\beta\int_{Q_{R}}\left|H_{\frac{p}{2}}\left(Du^{\varepsilon}\right)-H_{\frac{p}{2}}\left(Du\right)\right|^{2}\,dz+\beta\sup_{t\in\left(t_{0}-{R^{2}},t_{0}\right)}\left\Vert u^{\varepsilon}(\cdot,t)-u(\cdot,t)\right\Vert_{L^{2}\left(B_{R}\left(x_{0}\right)\right)}^{2}\cr\cr
							&&+c\left(n,p,{R},\beta\right)\left\Vert f-f^{\varepsilon}\right\Vert_{L^2\left(Q_{ R}\right)}^{\frac{n+2}{n+1}}\cdot\left(\int_{Q_{R}}\left(\left|Du\right|^{p}+1\right)\,dz\right)^{\frac{n}{p\left(n+1\right)}}\cr\cr
							&&+c\left(n,p,{R},\beta\right)\left\Vert f-f^{\varepsilon}\right\Vert_{L^2\left(Q_{R}\right)}^\frac{p\left(n+2\right)}{n\left(p-1\right)+p}+\varepsilon\,c_p\int_{Q_{R}}\left|Du\right|^{p}\,dz.
						\end{eqnarray}
						Choosing $\displaystyle\beta=\frac{1}{2}$  and neglecting the third  non negative term  in the left hand side of \eqref{eq:ee15*}, we get
						\begin{eqnarray}\label{eq:ee19}
							&&\sup_{t\in(t_{0}-{R^{2}},t_{0})}\left\Vert u^{\varepsilon}(\cdot,t)-u(\cdot,t)\right\Vert_{L^{2}\left(B_{R}\left(x_{0}\right)\right)}^{2}+\int_{Q_{R}}\left|H_{\frac{p}{2}}\left(Du^{\varepsilon}\right)-H_{\frac{p}{2}}\left(Du\right)\right|^{2}\,dz\cr\cr
							&\leq&c\left(n,p,{R}\right)\left\Vert f-f^{\varepsilon}\right\Vert_{L^2\left(Q_{R}\right)}^{\frac{n+2}{n+1}}\cdot\left(\int_{Q_{R}}\left(\left|Du\right|^{p}+1\right)\,dz\right)^{\frac{n}{p\left(n+1\right)}}\cr\cr
							&&+c\left(n, p, {R}\right)\left\Vert f-f^{\varepsilon}\right\Vert_{L^2\left(Q_{R}\right)}^\frac{p\left(n+2\right)}{n\left(p-1\right)+p}+\varepsilon\,c_p\int_{Q_{R}}\left|Du\right|^{p}\,dz.
						\end{eqnarray}
						For further needs, we also record that, combining \eqref{eq:ee16} and \eqref{eq:ee19}, we have
						\begin{eqnarray}\label{eq:ee17}
							\int_{Q_{R}}\left|Du^{\varepsilon}\right|^{p}\,dz&\leq&c\left(n,p,{R}\right)\left\Vert f-f^{\varepsilon}\right\Vert_{L^2\left(Q_{R}\right)}^{\frac{n+2}{n+1}}\cdot\left(\int_{Q_{R}}\left(\left|Du\right|^{p}+1\right)\,dz\right)^{\frac{n}{p\left(n+1\right)}}\cr\cr
							&&+c\left(n,p,{R}\right)\left\Vert f-f^{\varepsilon}\right\Vert_{L^2\left(Q_{R}\right)}^\frac{p\left(n+2\right)}{n\left(p-1\right)+p}+\varepsilon\,c_p\int_{Q_{R}}\left|Du\right|^{p}\,dz\cr\cr
							&&+c_p\int_{Q_{R}}\left(\left|Du\right|^{p}+1\right)\,dz.
						\end{eqnarray}
						{\bf Step 2:  The conclusion.}\\
						Let us fix {$\rho>0$ such that $Q_{2\rho}\subset Q_R$}. 
						We start observing that
						\begin{eqnarray*}\label{eq230} &&\int_{Q_{{\frac{\rho}{2}}}}\left|\tau_h\left[\mathcal{G}_\delta\left(\left(\left|Du\right|-\delta -1\right)_+\right)\right]\right|^2\,dz\cr\cr 
							&\le& c \int_{Q_{{\frac{\rho}{2}}}}\left|\tau_h\left[\mathcal{G}_\delta\left(\left(\left|Du^\eps\right|-\delta -1\right)_+\right)\right]\right|^2\,dz\cr\cr &&+c\int_{Q_{{\rho}}}\left|\mathcal{G}_\delta\left(\left(\left|Du^\eps\right|-\delta -1\right)_+\right)-\mathcal{G}_\delta\left(\left(\left|Du\right|-\delta -1\right)_+\right)\right|^2\,dz.
						\end{eqnarray*}
						We estimate the right hand side of previous inequality using \eqref{eq229t} and \eqref{GHineq}, as follows
						\begin{eqnarray*}\label{tGdeltaDu*}
							&&\int_{Q_{{\frac{\rho}{2}}}}\left|\tau_h\left[\mathcal{G}_\delta\left(\left(\left|Du\right|-\delta -1\right)_+\right)\right]\right|^2\,dz\cr\cr 
							&\le&\frac{c|h|^2}{{\rho^2}}\left[\int_{Q_{{2\rho}}}\left(1+\left|D u^\varepsilon\right|^p\right)\,dz+\delta^{2-p}\int_{Q_{{2\rho}}}\left|f^\varepsilon\right|^2 \,dz\right]\cr\cr
							&&+c_p\int_{Q_{{2\rho}}}\left|H_{\frac{p}{2}}\left(Du^\eps\right)-H_{\frac{p}{2}}\left(Du\right)\right|^2\,dz
						\end{eqnarray*}
						that, thanks to \eqref{eq:ee19}, implies
						\begin{eqnarray}\label{tGdeltaDu}
							&&\int_{Q_{{\frac{\rho}{2}}}}\left|\tau_h\left[\mathcal{G}_\delta\left(\left(\left|Du\right|-\delta -1\right)_+\right)\right]\right|^2\,dz\cr\cr 
							&\le&\frac{c|h|^2}{\rho^2}\left[\int_{Q_{{2\rho}}}\left(1+\left|D u^\varepsilon\right|^p\right)\,dz+\delta^{2-p}\int_{Q_{{2\rho}}}\left|f^\varepsilon\right|^2 \,dz\right]\cr\cr
							&&+c\left(n,p,{R}\right)\left\Vert f-f^{\varepsilon}\right\Vert_{L^2\left(Q_{R}\right)}^{\frac{n+2}{n+1}}\cdot\left(\int_{Q_{R}}\left(\left|Du\right|^{p}+1\right)\,dz\right)^{\frac{n}{p\left(n+1\right)}}\cr\cr
							&&+c\left(n, p, {R}\right)\left\Vert f-f^{\varepsilon}\right\Vert_{L^2\left(Q_{R}\right)}^\frac{p\left(n+2\right)}{n\left(p-1\right)+p}+\varepsilon\,c_p\int_{Q_{R}}\left|Du\right|^{p}\,dz.
						\end{eqnarray}
						Now, using \eqref{eq:ee17}, we get
						\begin{eqnarray*}\label{eq:ee17*}
							\int_{Q_{{2\rho}}}\left(1+\left|Du^{\varepsilon}\right|^{p}\right)\,dz
							&\leq&c\left(n, p, {R}\right)\left\Vert f-f^{\varepsilon}\right\Vert_{L^2\left(Q_{{R}}\right)}^{\frac{n+2}{n+1}}\cdot\left(\int_{Q_{{R}}}\left(\left|Du\right|^{p}+1\right)\,dz\right)^{\frac{n}{p\left(n+1\right)}}\cr\cr
							&&+c\left(n,p,{{R}}\right)\left\Vert f-f^{\varepsilon}\right\Vert_{L^2\left(Q_{{R}}\right)}^\frac{p\left(n+2\right)}{n\left(p-1\right)+p}+\varepsilon\,c_p\int_{Q_{{R}}}\left|Du\right|^{p}\,dz\cr\cr
							&&+c_p\int_{Q_{{R}}}\left(\left|Du\right|^{p}+1\right)\,dz
						\end{eqnarray*}
						which, combined with \eqref{tGdeltaDu}, implies
						\begin{eqnarray*}\label{tauhGdeltaDu'}
							&&\int_{Q_{\frac{\rho}{2}}}\left|\tau_h\left[\mathcal{G}_\delta\left(\left(\left|Du\right|-\delta -1\right)_+\right)\right]\right|^2\,dz\cr\cr 
							&\leq&\frac{c\left(n, p\right)|h|^2}{\rho^2}\left[c(R)\left\Vert f-f^{\varepsilon}\right\Vert_{L^2\left(Q_{{R}}\right)}^{\frac{n+2}{n+1}}\cdot\left(\int_{Q_{{R}}}\left(\left|Du\right|^{p}+1\right)\,dz\right)^{\frac{n}{p\left(n+1\right)}}\right.\cr\cr
							&&\left.+c(R)\left\Vert f-f^{\varepsilon}\right\Vert_{L^2\left(Q_{{R}}\right)}^\frac{p\left(n+2\right)}{n\left(p-1\right)+p}+\varepsilon\,\int_{Q_{{R}}}\left|Du\right|^{p}\,dz\right.\cr\cr
							&&\left.+\int_{Q_{{R}}}\left(\left|Du\right|^{p}+1\right)\,dz+\delta^{2-p}\int_{Q_{{R}}}\left|f^\varepsilon\right|^2 \,dz\right].
						\end{eqnarray*}
						Taking the limit as $\varepsilon\to0$, and since $f^\eps\to f$ strongly in $L^2\left(B_R\right)$, we obtain 
						\begin{eqnarray*}\label{tauhGdeltaDu}
							&&\int_{Q_{\frac{\rho}{2}}}\left|\tau_h\left[\mathcal{G}_\delta\left(\left(\left|Du\right|-\delta -1\right)_+\right)\right]\right|^2\,dz\cr\cr 
							&\le& \frac{c\left(n, p\right)|h|^2}{\rho^2}\left[\int_{Q_{{R}}}\left(\left|Du\right|^{p}+1\right)\,dz+\delta^{2-p}\int_{Q_{{R}}}\left|f\right|^2 \,dz\right], 
						\end{eqnarray*}
						and thanks to Lemma \ref{lem:RappIncre}, we have $\mathcal{G}_\delta\left(\left(\left|Du\right|-\delta -1\right)_+\right)\in L^2\left(t_0-{\rho^2},t_0; W^{1,2}\left(B_{\rho}\right)\right)$ with the following estimate
						\begin{eqnarray*}
							&&\int_{Q_\frac{\rho}{2}}\left|D\left[\mathcal{G}_\delta\left(\left(\left|Du\right|-\delta -1\right)_+\right)\right]\right|^2\,dz\cr\cr 
							&\le& \frac{c\left(n, p\right)}{\rho^2}\left[\int_{Q_{ {R}}}\left(\left|Du\right|^{p}+1\right)\,dz+\delta^{2-p}\int_{Q_{R}}\left|f\right|^2 \,dz\right]. 
						\end{eqnarray*}
						Since previous estimate holds true for any $\rho>0$ such that $4\rho<R$, we may choose $\rho=\displaystyle\frac{R}{8}$ thus getting {\eqref{DGdeltaDu}}.}
				\end{proof}
				
				\section{Proof of Theorem \ref{thm2}}
				The higher differentiability result of Theorem \ref{Theorem1}
				allows us to argue as in \cite[Lemma 5.3]{DMS}
				and \cite[Lemma 3.2]{Sche} to obtain the proof of Theorem \ref{thm2}.
				\noindent \begin{proof}[Proof of Theorem \ref{thm2}] We start observing that
					\begin{eqnarray}\label{eq:DV}
						&&\left|D\left(\left(\mathcal{G}_\delta\left(\left(\left|Du^\varepsilon\right|-1-\delta\right)_+\right)\right)^{\frac{4}{np}\,+\,1}\right)\right|\cr\cr 
						&\le& c\left|\mathcal{G}_\delta\left(\left(\left|Du^\varepsilon\right|-1-\delta\right)_+\right)\right|^{\frac{4}{np}}\left|D\left[\mathcal{G}_\delta\left(\left(\left|Du^\varepsilon\right|-1-\delta\right)_+\right)\right]\right|,
					\end{eqnarray}
					\\
					where $c\equiv c(n,p)>0$ and $\mathcal{G}_\delta(t)$ is the function defined at \eqref{Gdef}.\\
					
					With the notation we used in the previous sections, for $B_{{2\rho}}\left(x_{0}\right)\Subset B_R\left(x_0\right)$,
					let $\varphi\in C_{0}^{\infty}\left(B_{\rho}\left(x_{0}\right)\right)$ and $\chi\in W^{1,\infty}\left((0,T)\right)$
					be two non-negative cut-off functions with $\chi(0)=0$ and $\partial_{t}\chi\geq0$.
					Now, we fix a time $t_{0}\in(0,T)$ and apply the Sobolev embedding theorem
					on the time slices $\varSigma_{t}:=B_{\rho}(x_{0})\times\left\{ t\right\} $
					for almost every $t\in(0,t_{0})$, to infer that 
					\begin{eqnarray*}
						&&\int_{\varSigma_{t}}\varphi^{2}\left(\left(\mathcal{G}_\delta\left(\left(\left|Du^\varepsilon\right|-1-\delta\right)_+\right)\right)^{\frac{4}{np}\,+\,1}\right)^{2}\,dx\cr\cr
						&\leq&c\left(\int_{\varSigma_{t}}\left|D\left(\varphi \left(\mathcal{G}_\delta\left(\left(\left|Du^\varepsilon\right|-1-\delta\right)_+\right)\right)^{\frac{4}{np}\,+\,1}\right)\right|^{\frac{2n}{n+2}}\,dx\right)^{\frac{n+2}{n}}\cr\cr
						&\leq&c\left(\int_{\varSigma_{t}}\left|\varphi\,D\left(\left(\mathcal{G}_\delta\left(\left(\left|Du^\varepsilon\right|-1-\delta\right)_+\right)\right)^{\frac{4}{np}\,+\,1}\right)\right|^{\frac{2n}{n+2}}\,dx\right)^{\frac{n+2}{n}}\cr\cr
						&&+c\left(\int_{\varSigma_{t}}\left|\left|\mathcal{G}_\delta\left(\left(\left|Du^\varepsilon\right|-1-\delta\right)_+\right)\right|^{\frac{4}{np}\,+\,1}\,D\varphi\right|^{\frac{2n}{n+2}}\,dx\right)^{\frac{n+2}{n}}\cr\cr
						&=:&\,c\,I_{1}(t)\,+\,c\,I_{2}(t),
					\end{eqnarray*}
					where, in the second to last line, we have applied Minkowski's and
					Young's inequalities one after the other. We estimate
					$I_{1}(t)$ and $I_{2}(t)$ separately. Let us first consider $I_{1}(t)$.
					Using \eqref{eq:DV}, Lemma \ref{estGdelta} and H\"older's inequality with exponents $\displaystyle\left(\frac{n+2}{n}, \frac{n+2}{2}\right)$, we deduce
					\begin{eqnarray*}
						I_{1}(t)&\leq&c\left(\int_{\varSigma_{t}}\varphi^{\frac{2n}{n+2}}\left[\left(\left|Du^\varepsilon\right|-1\right)_{+}^{\frac{2}{n}}\left|D\mathcal{G}_\delta\left(\left(\left|Du^\varepsilon\right|-1-\delta\right)_{+}\right)\right|\right]^{\frac{2n}{n+2}}\,dx\right)^{\frac{n+2}{n}}\cr\cr
						&\leq&c\int_{\varSigma_{t}}\varphi^{2}\left|D\mathcal{G}_\delta\left(\left(\left|Du^\varepsilon\right|-1-\delta\right)_{+}\right)\right|^{2}dx\left(\int_{\supp(\varphi)}\left(\left|Du^\varepsilon\right|-1\right)_{+}^{2}\,dx\right)^{\frac{2}{n}}\cr\cr
						&\leq&c\int_{\varSigma_{t}}\varphi^{2}\left|D\mathcal{G}_\delta\left(\left(\left|Du^\varepsilon\right|-1-\delta\right)_{+}\right)\right|^{2}\,dx\left(\int_{\supp(\varphi)}\left|Du^\varepsilon\right|^{2}\,dx\right)^{\frac{2}{n}}.
					\end{eqnarray*}
					We now turn our attention to $I_{2}(t)$. Lemma \ref{estGdelta} and H\"older's inequality yield
					\begin{eqnarray*}
						I_{2}(t)&\le&c\left(\int_{\varSigma_{t}}\left(\left|Du^\varepsilon\right|-1\right)_{+}^{\frac{np\,+\,4}{n+2}}\left|D\varphi\right|^{\frac{2n}{n+2}}\,dx\right)^{\frac{n+2}{n}}\cr\cr
						&\leq&c\left(\int_{\varSigma_{t}}\left[\left|D\varphi\right|^{2}\left|Du^\varepsilon\right|^{p}\right]^{\frac{n}{n+2}}\left|Du\right|^{\frac{4}{n+2}}\,dx\right)^{\frac{n+2}{n}}\cr\cr
						&\leq&c\int_{\varSigma_{t}}\left|D\varphi\right|^{2}\left|Du^\varepsilon\right|^{p}\,dx\left(\int_{\mathrm{supp}(\varphi)}\left|Du^\varepsilon\right|^{2}\,dx\right)^{\frac{2}{n}}.
					\end{eqnarray*}
					Putting together the last three estimates, using Lemma \ref{estGdelta} in the left hand side, and integrating with respect
					to time, we obtain\\
					\begin{eqnarray}\label{eq:est17}
						\int_{Q^{t_{0}}}\chi\varphi^{2}\,\left(\left|Du^\varepsilon\right|-1\right)_{+}^{p\,+\,\frac{4}{n}}\,dz&\leq&c\int_{0}^{t_{0}}\chi\left[\int_{\supp(\varphi)}\left|Du^\varepsilon(x,t)\right|^{2}dx\right]^{\frac{2}{n}}\cdot\cr\cr
						&&\cdot\left[\int_{\varSigma_{t}}\left(\varphi^{2}\left|D\mathcal{G}_\delta\left(\left(\left|Du^\varepsilon\right|-1-\delta\right)_{+}\right)\right|^{2}+\left|D\varphi\right|^{2}\left|Du\right|^{p}\right)\,dx\right]\,dt\cr\cr
						&\leq&c\int_{Q^{t_{0}}}\chi\left(\varphi^{2}\left|D\mathcal{G}_\delta\left(\left(\left|Du^\varepsilon\right|-1-\delta\right)_{+}\right)\right|^{2}+\left|D\varphi\right|^{2}\left|Du\right|^{p}\right)\,dz\cr\cr
						&&\cdot\left[\sup_{0<t<t_{0},\,\chi(t)\neq0}\int_{\supp(\varphi)}\left|Du^\varepsilon(x,t)\right|^{2}\,dx\right]^{\frac{2}{n}},
					\end{eqnarray}
					where we have used the abbreviation $Q^{t_{0}}:=B_{\rho}\left(x_{0}\right)\times\left(0,t_{0}\right)$.\\
					Now we choose $\chi\in W^{1,\infty}\left((0,T)\right)$ such that
					$\chi\equiv0$ on $\left(0,t_{0}-\rho^{2}\right]$, $\chi\equiv1$ on $\left[t_{0}-\left(\displaystyle\frac{\rho}{2}\right)^{2},T\right)$ and $\partial_{t}\chi\geq0$.
					For $\varphi\in C_{0}^{\infty}\left(B_{\rho}\left(x_{0}\right)\right)$, we
					assume that $\varphi\equiv1$ on $B_{\frac{\rho}{2}}\left(x_{0}\right)$, $0\leq\varphi\leq1$ and $\left|D\varphi\right|\leq\displaystyle\frac{C}{\rho}$.\\
					With these choices \eqref{eq:est17} turns into
					\begin{eqnarray}\label{eq:est18}
						\int_{Q_{\frac{\rho}{2}}}\left(\left|Du^\varepsilon\right|-1\right)_{+}^{p\,+\,\frac{4}{n}}\,dz&\leq&c(n,p)\int_{Q_{\rho}}\left(\left|D\mathcal{G}_\delta\left(\left|Du^\varepsilon\right|-1-\delta\right)_{+})\right|^{2}+\rho^{-2}\left|Du^\varepsilon\right|^{p}\right)dz\cr\cr
						&&\cdot\left[\sup_{t_{0}-\rho^{2}<t<t_{0}}\int_{B_{\rho}\left(x_{0}\right)}\left|Du^\varepsilon(x,t)\right|^{2}dx\right]^{\frac{2}{n}}.
					\end{eqnarray}
					We now use \eqref{uniformest}, in order to estimate the first and second integral on the right-hand
					side of \eqref{eq:est18}, thus getting
					\begin{equation*}
						\int_{Q_{\frac{\rho}{2}}}\left(\left|Du^\varepsilon\right|-1\right)_{+}^{p\,+\,\frac{4}{n}}\,dz\leq\frac{c}{\rho^{\frac{2(n+2)}{n}}}\left[\int_{Q_{{2\rho}}}\left(1+\left|Du^\varepsilon\right|^{p}\right)\,dz+\delta^{2-p}\int_{Q_{{2\rho}}}\left|f^\varepsilon\right|^2\,dz\right]^{\frac{2}{n}+1}.
					\end{equation*}
					Now we use \eqref{eq:ee17} to deduce that
					{\begin{eqnarray}\label{eq:ee19*}
							&&\int_{Q_{\frac{\rho}{2}}}(\vert Du^\varepsilon\vert-1)_{+}^{p\,+\,\frac{4}{n}}\,dz\cr\cr
							&\leq&\frac{c\left(n,p\right)}{\rho^{\frac{2(n+2)}{n}}}\left[c(\rho)\left\Vert f-f^{\varepsilon}\right\Vert_{L^2\left(Q_{{2\rho}}\left(z_0\right)\right)}^{\frac{n+2}{n+1}}\cdot\left(\int_{Q_{{2\rho}}}\left(\left|Du\right|^{p}+1\right)\,dz\right)^{\frac{n}{p\left(n+1\right)}}\right]^{\frac{2}{n}+1}\cr\cr
							&&+\frac{c\left(n,p\right)}{\rho^{\frac{2(n+2)}{n}}}\left[c(\rho)\left\Vert f-f^{\varepsilon}\right\Vert_{L^2\left(Q_{{2\rho}}\right)}^\frac{p\left(n+2\right)}{n\left(p-1\right)+p}+\varepsilon\,c_p\int_{Q_{{2\rho}}}\left|Du\right|^{p}\,dz\right]^{\frac{2}{n}+1}\cr\cr
							&&+\frac{c\left(n,p\right)}{\rho^{\frac{2(n+2)}{n}}}\left[\int_{Q_{{2\rho}}}\left(\left|Du\right|^{p}+1\right)\,dz+\delta^{2-p}\int_{Q_{{2\rho}}}\left|f^\varepsilon\right|^2\,dz\right]^{\frac{2}{n}+1}.
					\end{eqnarray}}
					
					Let us observe that estimate \eqref{eq:ee19} in particular implies that
					{\begin{eqnarray*}
							&&\int_{Q_{{R}}}\left|H_{\frac{p}{2}}\left(Du^{\varepsilon}\right)-H_{\frac{p}{2}}\left(Du\right)\right|^{2}\,dz\cr\cr
							&\leq&c\left(n,p,{R}\right)\left\Vert f-f^{\varepsilon}\right\Vert_{L^2\left(Q_{{R}}\right)}^{\frac{n+2}{n+1}}\cdot\left(\int_{Q_{{R}}}\left(\left|Du\right|^{p}+1\right)\,dz\right)^{\frac{n}{p\left(n+1\right)}}\cr\cr
							&&+c\left(n,p,{R}\right)\left\Vert f-f^{\varepsilon}\right\Vert_{L^2\left(Q_{{R}}\right)}^\frac{p\left(n+2\right)}{n\left(p-1\right)+p}+\varepsilon\,c_p\int_{Q_{{R}}}\left|Du\right|^{p}\,dz.
					\end{eqnarray*}}
					By the strong convergence of $f^\varepsilon \to f$ in $L^2\left(Q_R\right)$, passing to the limit as $\varepsilon \to 0$, from previous estimate we deduce 
					$$\lim_{\varepsilon\to 0}\int_{Q_{{R}}}\left|H_{\frac{p}{2}}\left(Du^{\varepsilon}\right)-H_{\frac{p}{2}}\left(Du\right)\right|^{2}\,dz=0 $$
					that is $H_{\frac{p}{2}}\left(Du^{\varepsilon}\right)\to H_{\frac{p}{2}}\left(Du\right)$, strongly in $L^2\left(Q_{{R}}\right).$ Therefore, up to a not relabelled subsequence, we also have $H_{\frac{p}{2}}\left(Du^{\varepsilon}\right)\to H_{\frac{p}{2}}\left(Du\right)$, a.e. in $Q_{{R}}\left(z_0\right)$ and so
					$$\left(\left|Du^\varepsilon\right|-1\right)_+\to\left(\left|Du\right|-1\right)_+\qquad\mbox{ a.e. in }Q_{{R}}\left(z_0\right)$$
					By Fatou's Lemma, taking the limit as $\varepsilon \to 0$ in both sides of \eqref{eq:ee19*}
					{\begin{eqnarray*}\label{eq:ee19**}
							&&\int_{Q_{\frac{\rho}{2}}}\left(\left|Du\right|-1\right)_{+}^{p\,+\,\frac{4}{n}}\,dz\le\liminf_{\varepsilon\to0}\int_{Q_{\frac{\rho}{2}}}(\vert Du^\varepsilon\vert-1)_{+}^{p\,+\,\frac{4}{n}}\,dz\cr\cr
							&\leq&\frac{c\left(n,p\right)}{\rho^{\frac{2(n+2)}{n}}}\left[\int_{Q_{{2\rho}}}\left(\left|Du\right|^{p}+1\right)\,dz+\delta^{2-p}\int_{Q_{{2\rho}}}\left|f\right|^2\,dz\right]^{\frac{2}{n}+1},
					\end{eqnarray*}}
					which holds for any $\delta\in(0, 1)$, so we can fix $\delta=\displaystyle\frac{1}{2}$, to get the conclusion \eqref{est2}.
				\end{proof}
				
				\printbibliography
			\end{document}